\documentclass[12pt]{article}
\usepackage[margin=0.7in]{geometry}
\usepackage{amssymb,amsmath,amsthm}
\usepackage{graphicx,epstopdf,color}
\usepackage{esint}

\usepackage[alphabetic]{amsrefs}

\usepackage[latin1]{inputenc}
\newcommand{\R}{\mathbb{R}}

\newcommand{\N}{\mathbb{N}}

\usepackage{miama}
\usepackage[T1]{fontenc}
\usepackage{mathtools}

\allowdisplaybreaks

\numberwithin{equation}{section}

\newtheorem{thm}{Theorem}[section]
\newtheorem{cor}[thm]{Corollary}
\newtheorem{lem}[thm]{Lemma}
\newtheorem{prop}[thm]{Proposition}
\newtheorem{defn}[thm]{Definition}
\newtheorem{rem}[thm]{Remark}

\renewcommand{\leq}{\leqslant}
\renewcommand{\le}{\leqslant}
\renewcommand{\geq}{\geqslant}
\renewcommand{\ge}{\geqslant}

\usepackage{authblk}

\begin{document}

\title{Density estimates for a nonlocal variational model \\
with a degenerate double-well potential \\
via the Sobolev inequality}


\author{Serena Dipierro, Alberto Farina, Giovanni Giacomin and Enrico Valdinoci
\thanks{Serena Dipierro, Giovanni Giacomin and Enrico Valdinoci:
Department of Mathematics and Statistics,
University of Western Australia, 35 Stirling Highway,
WA6009 Crawley, Australia.\\
Alberto Farina: LAMFA, UMR CNRS 7352, 
	Universit\'e Picardie Jules Verne
	33, rue St Leu, 80039 Amiens, France.\\
{\tt serena.dipierro@uwa.edu.au, alberto.farina@u-picardie.fr
giovanni.giacomin@research.uwa.edu.au,
enrico.valdinoci@uwa.edu.au\\
SD, GG and EV are members of Australian Mathematical Society.
Supported by the Australian Future Fellowship
FT230100333 ``New perspectives on nonlocal equations''
and the Australian Laureate Fellowship FL190100081 ``Minimal
surfaces, free boundaries and partial differential equations''. 
Helpful discussions with Francesco De Pas and Jack Thompson are acknowledged. AF and GG would like to thank the Institute Henri Poincar\'e where part of this research was performed.}}}

\maketitle

\begin{abstract}
We provide density estimates for level sets of minimizers of the energy 
\begin{equation*}
\frac{1}{2}\int_{\Omega}\int_{\Omega}\frac{\left|u(x)-u(y)\right|^p}{\left|x-y\right|^{n+sp}}\,dx\,dy +\int_{\Omega}\int_{\R^n\setminus \Omega}\frac{\left|u(x)-u(y)\right|^p}{\left|x-y\right|^{n+sp}}\,dx\,dy+\int_{\Omega}W(u(x))\,dx
\end{equation*}
where~$p\in (1,+\infty)$, $s \in \left(0,\frac{1}{p}\right)$ and~$W$ is a double-well potential with polynomial growth~$m\in \left[p,+\infty\right)$ from the minima.

These kinds of potentials are ``degenerate'', since they detach ``slowly'' from the minima, therefore they provide additional difficulties if one wishes to determine the relative sizes of the ``layers'' and the ``pure phases''. To overcome these challenges, we introduce new barriers allowing us to rely on the fractional Sobolev inequality and on a suitable iteration method.

The proofs presented here are robust enough to consider the case of quasilinear nonlocal equations driven by the fractional $p$-Laplacian, but our results are new even for the case~$p=2$.
\end{abstract}

\bigskip

{\bf Mathematics Subject Classification 2020:}
47G10, 
47B34, 
35R11, 
35B08.  

	\smallskip
	
{\bf Keywords:}
Nonlocal energies, fractional Laplacian, degenerate potential.
\bigskip

\section{Introduction}

Phase coexistence models were introduced in physics to study the interfaces between regions associated with different values of a suitable state parameter, such as the density of a fluid in capillarity phenomena~\cite{MR0523642}, the fraction of volume occupied by two different materials in a non homogeneous system~\cite{cahn1958free}, or the density of the superfluid component of helium~\cite{ginzburg1958theory}. Since these early works, phase coexistence models have been developed to describe a variety of physical systems characterized by the interaction of two or more components. 

All these models are described by some region~$\Omega\subset\R^n$ and a state parameter~$u:\Omega\to [-1,1]$, where the ``pure phases'' correspond in our setting to the values~$1$ and~$-1$, and the phase separation is induced by the minimization of a suitable energy functional. 

The prototype of such an energy consists of a potential and an interaction term. The role of the potential term is to push the state parameter towards the pure phases, while the interaction term discourages the production of unneeded interfaces by suitably ``shaping'' the portions of the domain in which the value of the state parameter is either~$1$ or~$-1$ (or is ``sufficiently close'' to these values).

In the local setting, the interaction energy is proportional to a gradient term of~$L^p$-type with~$p\in(1,+\infty)$, see for instance~\cite{MR0618549,MR0733897,bouchitte1990singular,6,petrosyan2005geometric}. In order to capture long-range interactions, in the nonlocal framework the~$L^p$-norm of the gradient has been replaced by the Gagliardo seminorm, see for instance~\cite{4,5,3}. For further references regarding local and nonlocal phase separation models we refer to the survey~\cite{dipierro2023some}.    

The potential energy is given by the~$L^1$-norm of a ``double-well'' function~$W$ whose absolute minima are the pure phases. More precisely, $W:[-1,1]\to \R$ satisfies 
\begin{equation}\label{9,876}
W(t)>0 \;\;
{\mbox{ for every $t\in (-1,1)$}} \qquad\mbox{and}\qquad W(\pm 1)=0. 
\end{equation}

In the study of phase coexistence models, it is often desirable to quantify the size of the transition layer with respect to that of the pure phases (or, more precisely, of the regions close to pure phases). This is important since, due to the complexity of the problem, it is often difficult, when not impossible, to have general descriptions of the interfaces and the best one can do is typically to describe the phase separation ``in a measure theoretic sense''
and conclude that, at a large scale, most of the space is occupied
by state parameters close to
pure phases, separated by ``thin'' interfaces. In this spirit, the study of density estimates, namely of obtaining sharp quantifications of the measure of the interface, happens to be quite a cross-disciplinary topic, involving, under various perspectives, mathematical analysis, statistical mechanics, and materials sciences.

In the framework of density estimates a common hypothesis on the potential is that its growth from the minima is comparable to a polynomial of degree~$m$, where~$m\in (0,p]$ and~$p\in(1,+\infty)$ is the exponent of the interaction term, see for instance formula~(1.10) in~\cite{6} for the local case and formula~(1.9) in~\cite{3} for the nonlocal one.   

In the current paper, we consider the energy functional 
\begin{equation}\label{diorefdvgyt}
\mathcal{E}_{s}^p(u,\Omega):=\frac{1}{2}\int_{\Omega}\int_{\Omega}\frac{\left|u(x)-u(y)\right|^p}{\left|x-y\right|^{n+sp}}\,dx\,dy +\int_{\Omega}\int_{\R^n\setminus \Omega}\frac{\left|u(x)-u(y)\right|^p}{\left|x-y\right|^{n+sp}}\,dx\,dy+\int_{\Omega}W(u(x))\,dx
\end{equation}
where~$W$ presents a polynomial growth from the minima~$-1$ and~$1$ of degree~$m\in \left[p,+\infty\right)$. We will refer to these potentials as ``degenerate'', since they have a ``slow'' growth rate from their minima (in particular, slower than the homogeneity of the associated interaction energy).

In particular, we prove density estimates for every~$p\in(1,+\infty)$ and~$s\in \left(0,\frac{1}{p}\right)$, see Theorem~\ref{th:fracp>=2} below. The precise hypothesis on~$W$, the statement of Theorem~\ref{th:fracp>=2} and its corollaries are all discussed in Section~\ref{dcvfgbterslop9645r}. In what follows, we introduce the concept of density estimate and we present in more detail the implications of the presence of a degenerate double-well potential.   

\subsection{Density estimates}

To establish a conceptual framework
in a model case, let us consider the prototype energy functional
\begin{equation}\label{local2energy}
\mathcal{E}(u,\Omega):=\frac{1}{2}\int_{\Omega}\left|\nabla u(x)\right|^2\,dx+\int_{\Omega} W(u(x))\,dx,
\end{equation}
with~$W$ satisfying the assumptions in~\eqref{9,876} and ``nondegenerate''
(in the sense that~$W$ detaches at least quadratically from its minima). 
{F}rom one point of view, as expected from the shape of the potential~$W$, a minimizer for~$\mathcal{E}$ will be more likely to attain values close to the pure phases, to reduce the overall contribution of~$W$ in~$\Omega$. Yet, from another point of view, the interaction term penalizes ``big jumps'' of the state parameter. As a consequence of this simultaneous action, the minimizer will be pushed to attain values close to the pure phases but in such a way that at large scales the portion of the domain where the state parameter jumps from one pure phase to the other is negligible. 

A common question in physics and mathematics is related to the geometrical features of these  interfaces as we ``zoom out''. So far, there have a been two approaches to this question: one exploiting~$\Gamma$-convergence and the other relying on
density estimates.

On the one hand,
the idea behind~$\Gamma$-convergence is to analyse the limit functional obtained by suitably rescaling the minimizers of~$\mathcal{E}$ and deduce qualitative information regarding the interfaces at large scales. In particular, if~$u$ minimizes~$\mathcal{E}$ in~$\Omega$, then for~$\epsilon\in (0,1)$ we consider the rescaled state parameter 
\begin{equation*}
u_\epsilon(x):=u\left(\frac{x}{\epsilon}\right).
\end{equation*}
In~\cite{Mor77}, and  later in greater generality in~\cite{bouchitte1990singular}, it is proved that there exists some set~$E\subset\R^n$ such that as~$\epsilon\to 0^+$ the minimizers~$u_\epsilon$ converge, up to a subsequence, in~$L_{\textit{loc}}^1(\Omega)$ to\footnote{As usual, we denote here by~$\chi_A$ the characteristic function (or indicator function) of the set~$A$, namely
$$\chi_A(a):=\begin{cases}1&{\mbox{ if }}a\in A,\\0&{\mbox{ if }}a\not\in A.\end{cases}$$}
a step function~$u_0:=\chi_{E}-\chi_{E^c}$. Interestingly, the interface~$\partial E$ is a minimal surface, namely a minimizer of the perimeter functional.
\medskip

On the other hand, the purpose of density estimates is to provide lower bounds for the measure of the portion of the domain characterized by values of the minimizers of~$\mathcal{E}$ close to the pure phases. The first result in this direction is due to L. Caffarelli and A. C\'ordoba. In~\cite{caffarelli1995uniform} they prove that if~$u:\Omega\to [-1,1]$ minimizes~$\mathcal{E}$, then, for any~$\theta\in (-1,1)$ and~$r\in (0,+\infty)$ sufficiently large, one has that~$\{|u|>\theta\}\cap B_r$ is comparable to~$r^n$
(i.e., to the volume of the full ball~$B_r$), while~$\{|u|\leq \theta\}\cap B_r$ is bounded from above by a constant times~$r^{n-1}$
(corresponding, for example, to the $(n-1)$-dimensional surface measure in~$B_r$ of a hyperplane passing through the origin). In other words, state parameters close to the pure phases occupy a considerable portion of the domain, while the interface is negligible in the sense of measure. 

As a consequence, as the scale parameter~$\epsilon\in (0,1)$ approaches zero, the set~$\{|u_\epsilon|\leq \theta\}\cap B_r$
converges in the Hausdorff distance to the hypersurface~$\partial E$. More specifically, for any~$\delta$, $R>0$ there exists~$\epsilon_0:=\epsilon_0(\delta,R)>0$ such that, for any~$\epsilon\in (0,\epsilon_0)$, 
\begin{equation*}
\left\lbrace \left|u_\epsilon\right|\leq \theta\right\rbrace\cap B_R\subset \bigcup_{x\in\partial E}B_{\delta}(x).
\end{equation*}

Since the pioneering work of L. Caffarelli and A. C\'ordoba, density estimates have been used to study the interface convergence of minimizers and quasiminimizers of a larger spectrum of energy functionals related to phase separation. In~\cite{6} and~\cite{petrosyan2005geometric} gradient terms of~$L^p$-type with~$p\in (1,+\infty)$ are considered, together also with~$\chi$-shaped potentials~$W$, namely~$W(x):=\chi_{(-1,1)}(x)Q(x)$ with~$Q$ uniformly bounded away from zero.
This type of potentials are related to models for fluid jets and capillarity phenomena; see also~\cite{valdinoci2001plane} for the elliptic case. In~\cite{2} density estimates are also proved under the weaker assumption of quasiminimality.  

\medskip
 
As already mentioned, nonlocal counterparts of~\eqref{local2energy} have been dealt with in the literature. A consolidated choice consists in replacing the~$L^2$-norm of the gradient with the Gagliardo seminorm. In~\cite{savin2012gamma} and~\cite{3}, for~$s\in(0,1)$ and~$\epsilon\in (0,1)$, nonlocal phase separation was examined for the energy 
\begin{equation}\label{1nonlo2}\begin{split}
\mathcal{F}_{s,\epsilon}^2(u,\Omega)&:=\epsilon^{2s}\left(\frac{1}{2}\int_{\Omega}\int_{\Omega}\frac{\left|u(x)-u(y)\right|^2}{\left|x-y\right|^{n+2s}}\,dx\,dy +\int_{\Omega}\int_{\R^n\setminus \Omega}\frac{\left|u(x)-u(y)\right|^2}{\left|x-y\right|^{n+2s}}\,dx\,dy\right)\\&\qquad\qquad+\int_{\Omega}W(u(x))\,dx.\end{split}
\end{equation} 
More precisely, in~\cite{savin2012gamma} it is proved that, by suitably scaling the energy in~\eqref{1nonlo2} by some factor depending on~$\epsilon$ and~$s$, as~$\epsilon$ vanishes, the minimizers are compact and the rescaled energy~$\Gamma$-converges (to a functional of geometric
flavor which we discuss below). Such a gauge is proved to be~$\epsilon^{-2s}$, $\epsilon^{-1}|\log(\epsilon)|$ and~$\epsilon^{-1}$ respectively for the cases~$s\in\left(0,\frac{1}{2}\right)$, $s=\frac{1}{2}$ and~$s\in \left(\frac{1}{2},1\right)$.

Interestingly, in Theorem~1.4 of~\cite{savin2012gamma}, it is established that if~$s\in \left(0,\frac{1}{2}\right)$, then~$\epsilon^{-2s}\mathcal{F}_{s,\epsilon}^2$
$\Gamma$-converges as~$\epsilon\to 0^+$ to the nonlocal perimeter~$\mbox{Per}_s(\cdot,\Omega)$ in~$\Omega$, defined for every measurable set~$E\subset\R^n$ as
\begin{equation*}
\mbox{Per}_s(E,\Omega):=\int_{E\cap \Omega}\int_{E^c\cap \Omega}\frac{dx\,dy}{\left|x-y\right|^{n+2s}}+\int_{E\cap \Omega^c}\int_{E^c\cap \Omega}\frac{dx\,dy}{\left|x-y\right|^{n+2s}}+\int_{E\cap \Omega}\int_{E^c\cap \Omega^c}\frac{dx\,dy}{\left|x-y\right|^{n+2s}}.
\end{equation*} 
Additionally, in~\cite{savin2012gamma} it is proved that if~$s\in\left[\frac{1}{2},1\right)$ the~$\Gamma$-limit coincides with the classical perimeter. Surprisingly, this means that in the regime~$s\in \left[\frac{1}{2},1\right)$ and at large scales the effect of the Gagliardo seminorm on the interfaces is similar to the one of the~$L^2$-norm of the gradient and the phase separation resembles the classical one.  

In this sense, the case~$s\in \left(0,\frac{1}{2}\right)$ (when~$p=2$, or, more generally, 
the case~$s\in \left(0,\frac{1}{2}\right)$ when~$p\in(1,+\infty)$) is considered as ``genuinely
nonlocal'', since the problem retains its nonlocal features at every scale.

For what concerns density estimates, the cases~$s\in \left(0,\frac{1}{2}\right)$ and~$s\in \left[\frac{1}{2},1\right)$ can be treated separately. As a matter of fact, in~\cite{5} density estimates are proved for the functional in~\eqref{1nonlo2} for every~$s\in  \left(0,\frac{1}{2}\right)$ using the Sobolev inequality. This approach fails when~$s\in \left[\frac{1}{2},1\right)$ and in~\cite{3} a fine measure theoretic result is established, namely Theorem~1.6 in~\cite{3}, to obtain density estimates for every~$s\in \left(0,1\right)$, see Theorem~1.4 in~\cite{3}. 

The Sobolev inequality will be employed also in the current paper to obtain density estimates for the energy in~\eqref{diorefdvgyt}. Analogously to 
the case~$p=2$ dealt with in~\cite{5} and~\cite{3}, this approach only works if~$s\in\left(0,\frac{1}{p}\right)$. The case~$s\in \left[\frac{1}{p},1\right)$ will be treated in a forthcoming article~\cite{DFVERPP}. 

\subsection{Degenerate double-well potentials}
An interesting problem surrounding the topic of density estimates is how the growth from the minima of the double-well potential~$W$ might affect phase separation. For example, a slow growth might induce, in some cases, the formation of ``intermediate phases''. To picture this, we may consider the extreme situation where, for some~$a\in(0,1)$ and every~$x\in(-1,1)$, the potential is given by 
\begin{equation}
W(x):=\chi_{[-a,a]}(x),
\end{equation}
see Figure~\ref{nijufLO-06557}. In this case, the minimizer will occupy values close to~$-a$ and~$a$ to reduce the interaction, whether this is prompted by a nonlocal or local type of interaction term, while the state parameters with value in~$(-1,-a)$ and~$(a,1)$ become empty, violating density estimates.  In particular, this shows that to obtain density estimates some assumptions on~$W$ are necessary.

As already mentioned, a common hypothesis on the potential is that if~$p\in(1, +\infty)$ is the exponent of the interaction term, then, for some~$m\in (0,p]$, $\lambda\in(0,1)$ and~$\Lambda\in [1,+\infty)$, and every~$x\in (-1,1)$, 
\begin{equation}\label{lscfebvd76554}
\lambda (1-x^2)^m \leq W(x)\leq  \Lambda(1-x^2)^m. 
\end{equation}
This corresponds, in the framework of this paper, to a nondegenerate double-well potential,
since the growth from the potential's minima is not larger than the homogeneity of the interaction term.
In particular, in the classical case of a gradient term of~$L^2$-type this translates into a (sub)quadratic growth, see~\cite{caffarelli1995uniform}. In such a case, a common double-well potential in the literature is 
\begin{equation}\label{bcgdbalksyt}
W(x):=\frac{(1-x^2)^2}{4}.
\end{equation}
In view of~\eqref{lscfebvd76554} and~\eqref{bcgdbalksyt}, for~$m\in \left[p,+\infty\right)$ and~$p\in(1,+\infty)$ it is natural to define   
\begin{equation}\label{kifvdgcty56}
W(x):=\frac{(1-x^2)^m}{2m}
\end{equation} 
as a prototype of a degenerate double-well potential. See Figure~\ref{nijufLO-06557} for a comparison with the potential in~\eqref{bcgdbalksyt}.   
\begin{center}
\begin{figure}[!ht]
$\qquad$ \includegraphics[width=0.40\textwidth]{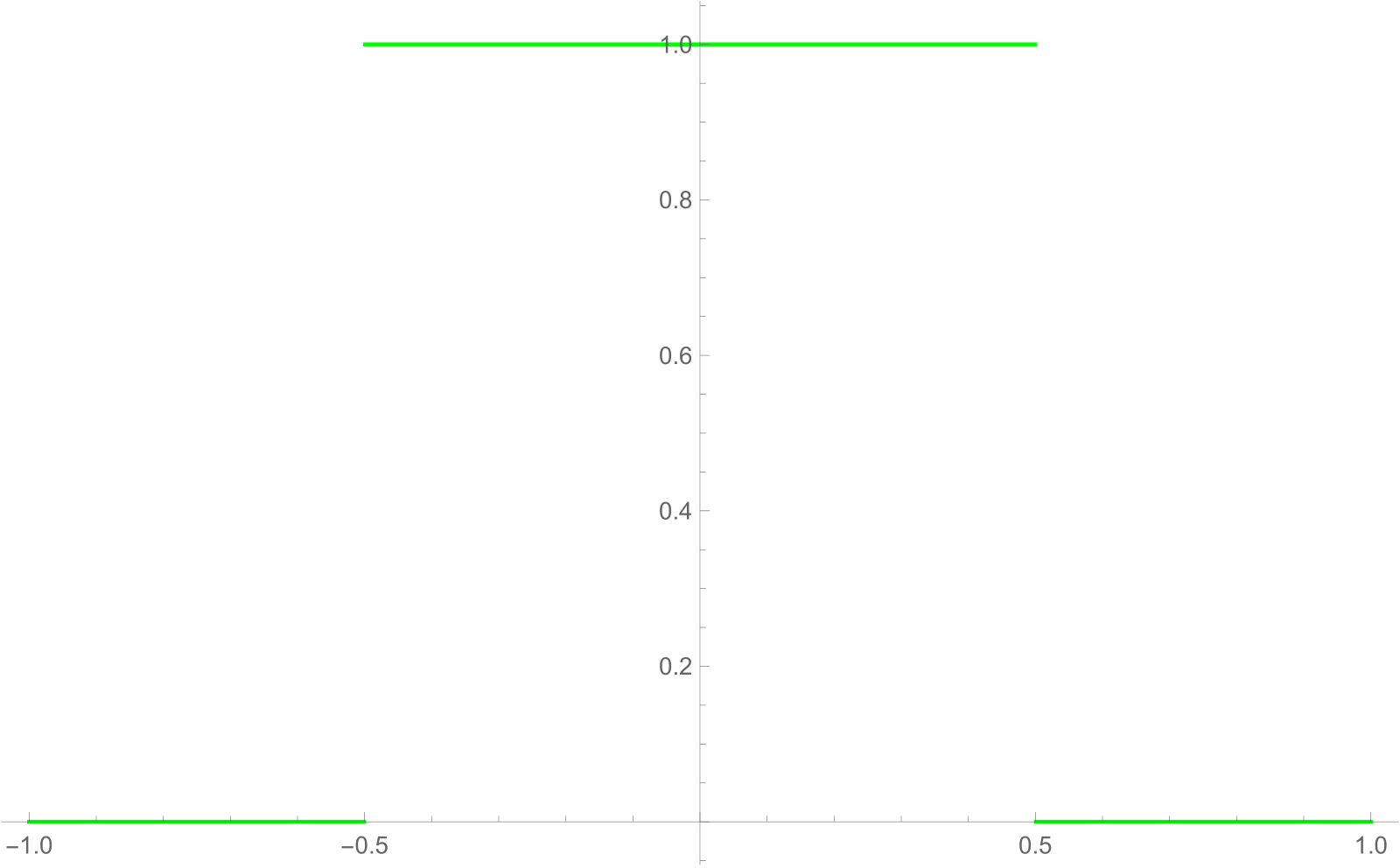}$\qquad$ $\qquad$
\includegraphics[width=0.40\textwidth]{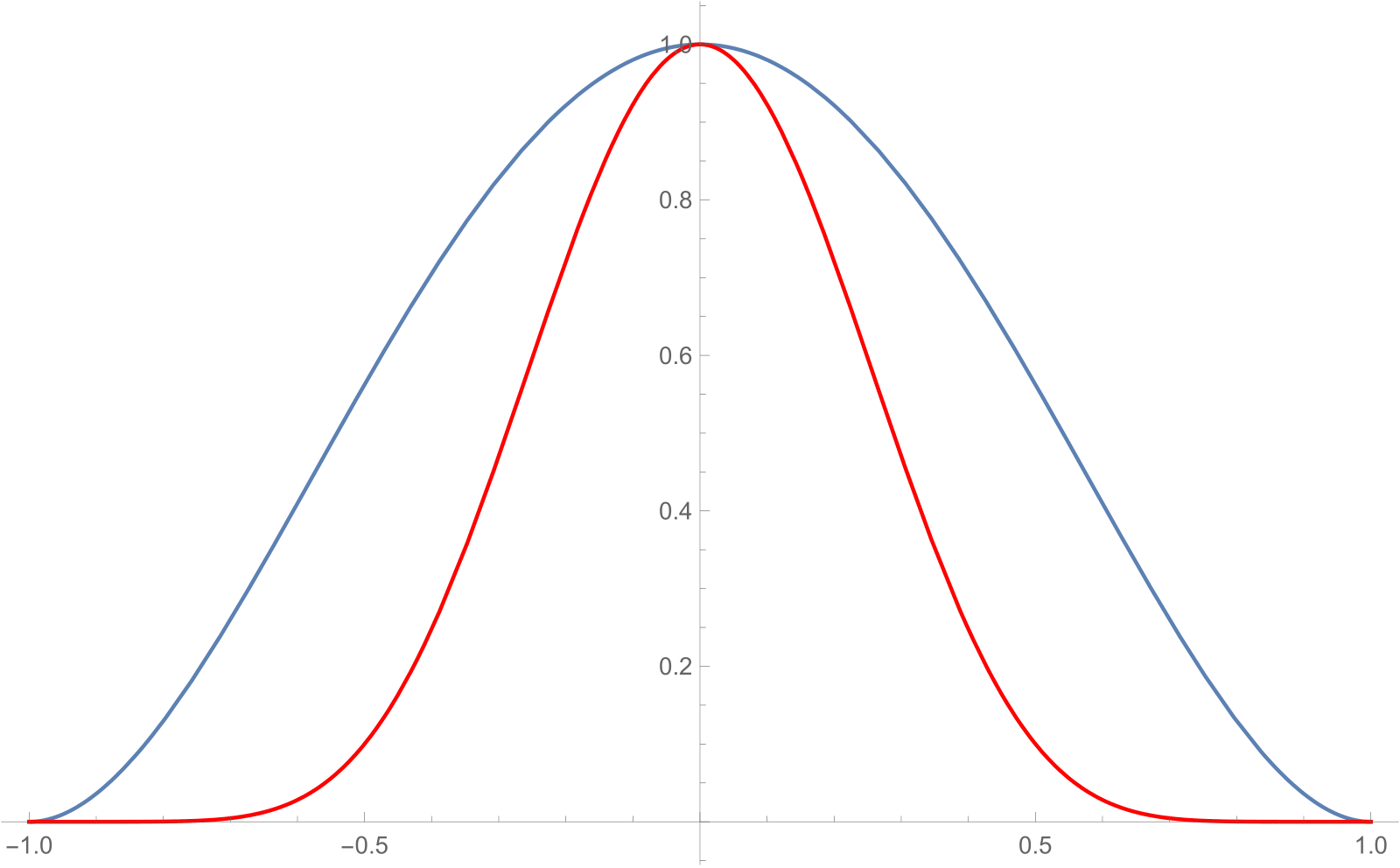}
\caption{\sl\footnotesize Plots of $\chi_{[-1/2,1/2]}(x)$, $(1-x^2)^2$ and $(1-x^2)^8$  respectively in green, blue and red.}
        \label{nijufLO-06557}
\end{figure}
\end{center}

Of course, the conditions in~\eqref{lscfebvd76554} may be too restrictive, and density estimates may still hold for double-well potentials with a flatter profile close to the minima. This problem has been first addressed in the local setting in~\cite{1},  where the authors prove density estimates for quasiminimizers of a phase transition energy driven by the~$L^p$-norm of the gradient and with a degenerate double-well potential of the type~\eqref{kifvdgcty56}, as far as some condition on~$n$, $m$ and~$p$ are met, see equation~(1.7) in~\cite{1}.  

The aim of this paper is to obtain density estimates for minimizers of the energy in~\eqref{diorefdvgyt}, when the potential~$W$ presents a polynomial growth from the minima comparable to the one of the degenerate potential in~\eqref{kifvdgcty56}.

In the following section we introduce all the necessary mathematical notation, state precisely the main results and provide the structure of the paper.

\subsection{Mathematical framework and main results}\label{dcvfgbterslop9645r}
In what follows~$n\in\N$, $\Omega\subset\R^n$ is open, $p\in (1,+\infty)$ and~$s\in \left(0,\frac{1}{p}\right)$. We consider~$W\in C^1([-1,1],\R^+)$ such that for some~$\theta\in (-1,1)$, $\lambda\in (0,1]$, $\Lambda\in [1,+\infty)$
and~$m\in [p,+\infty)$, for every~$x\in [-1,1]$,
\begin{equation}\label{potential}
\lambda\chi_{(-\infty,\theta]}(x)\left|1+x\right|^{m}   \leq W(x)\leq \Lambda\left|1+x\right|^m.
\end{equation}
Moreover, we assume that, for some~$q\in (0,\theta+1]\cap (0,1)$ and~$c_1\in (0,+\infty)$, for every~$x\in (-1,-1+q)$,
\begin{equation}\label{potential1}
c_1\left|1+x\right|^{m-1} \leq  W'(x).
\end{equation}
Note that the prototype of a degenerate double-well potential in~\eqref{kifvdgcty56} satisfies~\eqref{potential} and~\eqref{potential1}. 

Also, for every measurable function~$u:\R^n\to \R$ we define
\begin{equation}\label{kinetic}
\mathcal{K}_s^p(u,\Omega):=\frac{1}{2}\int_{\Omega}\int_{\Omega}\frac{\left|u(x)-u(y)\right|^p}{\left|x-y\right|^{n+sp}}\,dx\,dy +\int_{\Omega}\int_{\R^n\setminus \Omega}\frac{\left|u(x)-u(y)\right|^p}{\left|x-y\right|^{n+sp}}\,dx\,dy
\end{equation} 
and rewrite the energy in~\eqref{diorefdvgyt} as
\begin{equation}\label{energy}
\mathcal{E}_s^p(u,\Omega):= \mathcal{K}_s^p(u,\Omega)+\int_{\Omega}W(u(x))\,dx.
\end{equation}
Furthermore, in analogy with~\cite{savin2012gamma}, for every~$\epsilon \in(0,1)$, we consider the rescaled energy 
\begin{equation}\label{mathcalF}
\mathcal{F}_{s,\epsilon}^p(u,\Omega):=\mathcal{K}_{s}^p(u,\Omega)+\epsilon^{-sp}\int_{\Omega}W(u(x))\,dx.
\end{equation} 
A convenient function space to study~$\mathcal{K}_s^p$ is 
\begin{equation*}
\widehat{H}^{s,p}(\Omega):=\big\lbrace u:\R^n\to \R\mbox{  measurable s.t.  } \mathcal{K}_{s}^p(u,\Omega)<+\infty  \big\rbrace.
\end{equation*}
Note that~$W^{s,p}(\R^n)\subset \widehat{H}^{s,p}(\Omega)$. Since~$W$ is defined in the interval~$[-1,1]$, we consider the subspace of all those functions~$u\in \widehat{H}^{s,p}(\Omega)$ such that~$\left|u\right|\leq 1$ a.e., namely 
\begin{equation}\label{AdSet}
X^{s,p}(\Omega):=\left\lbrace u\in \widehat{H}^{s,p}(\Omega)\mbox{  s.t.  }\left\| u\right\|_{L^\infty(\R^n)}\leq 1\right\rbrace.
\end{equation}

With these choices we have the following definition:

\begin{defn}[Minimizer for~$\mathcal{E}_s^p$ and~$\mathcal{F}_{s,\epsilon}^p$]
If~$\Omega\subset\R^n$ is bounded, we say that~$u\in X^{s,p}(\Omega)$ is a minimizer for~$\mathcal{E}_s^p(\cdot,\Omega)$ in~$X^{s,p}(\Omega)$ if
\begin{equation*}
\mathcal{E}_s^p(u,\Omega) \leq  \mathcal{E}_s^p(v,\Omega)
\end{equation*}
for any~$v\in X^{s,p}(\Omega)$ such that~$v\equiv u$ in~$\R^n\setminus \Omega$.

If~$\Omega\subset\R^n$ is unbounded, we say that~$u\in X^{s,p}(\Omega)$ is a minimizer for~$\mathcal{E}_s^p(\cdot,\Omega)$ in~$X^{s,p}(\Omega)$ if it minimizes~$\mathcal{E}_s^p(\cdot,\Omega')$ in~$X^{s,p}(\Omega')$ for every bounded and open set~$\Omega'\Subset\Omega$. 

Similarly, one also defines a minimizer for~$\mathcal{F}_{s,\epsilon}^p$.
\end{defn}

\begin{rem}\label{lfvc}{\rm
Note that if~$u$ minimizes~$\mathcal{E}_s^p$ (or~$\mathcal{F}_{s,\epsilon}^p$) in an open and bounded set~$\Omega\subset\R^n$, then it minimizes~$\mathcal{E}_s^p$ (or~$\mathcal{F}_{s,\epsilon}^p$) in every measurable subset~$\Omega'\subset\Omega$. }
\end{rem}
\medskip

The main result of this paper is the following density estimate: 

\begin{thm}[Density estimates]\label{th:fracp>=2}
Let~$p\in (1,+\infty)$, $s\in \left(0,\frac{1}{p}\right)$ and~$\theta_1$, $\theta_2\in (-1,\theta]$. 

Let also\begin{equation}\label{liibfdcvre5555}
\theta_*:=\min \left\lbrace \theta_1,\theta_2,-1+q \right\rbrace\quad\mbox{and}\quad \theta^*:=\max\left\lbrace  \theta_1,\theta_2,-1+q\right\rbrace.
\end{equation}

Assume that~$u$ is a minimizer for~$\mathcal{E}_{s}^p$ in~$\Omega$ and that, for some~$c_0$, $r_0\in (0,+\infty)$ such that~$B_{r_0}\subset \Omega$,
\begin{equation}\label{Fcon-812345}
\mathcal{L}^{n}(B_{r_0}\cap \left\lbrace u>\theta_1 \right\rbrace)>c_0.
\end{equation}

Then, there exist~$R^*:=R_{s,n,p,m,\theta_*,r_0}^*\in [r_0,+\infty)$, $\tilde{c}:=\tilde{c}_{s,n,p,m,\Lambda,c_1,\theta_*,r_0,c_0}\in (0,1)$, and~$c:=c_{m,\theta_*}\in (0,+\infty)$, such that, for any~$r\in \left[R^*,+\infty\right)$ with~$B_{\frac{3r}{2}}\subset \Omega$,
\begin{equation}\label{benz}
c\int_{B_r\cap \left\lbrace \theta_{*}<u\leq \theta^* \right\rbrace}\left|1+u(x)\right|^m\,dx+\mathcal{L}^n(B_r\cap \left\lbrace u>\theta_2 \right\rbrace)\geq \tilde{c}\, r^n.
\end{equation}
\end{thm}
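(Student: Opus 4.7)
The plan is a nonlocal adaptation of the classical Caffarelli--C\'ordoba density-estimate scheme, combining an energy comparison against a competitor built from a tailored barrier, the fractional Sobolev embedding (available precisely because $s<\frac{1}{p}$ implies $sp<n$), and a discrete iteration bootstrap. Throughout, I set
\[
\Phi(r):=c\int_{B_r\cap\{\theta_*<u\leq\theta^*\}}|1+u(x)|^m\,dx+\mathcal{L}^n\bigl(B_r\cap\{u>\theta_2\}\bigr),
\]
so that the target inequality~\eqref{benz} reads $\Phi(r)\geq \tilde c\,r^n$.

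The first step is the construction of a family of radial barriers $\psi_\rho$ with $\psi_\rho\leq\theta_*$ on $B_{\rho-1}$, $\psi_\rho\equiv 1$ outside $B_\rho$, and fractional energy $\mathcal{K}_s^p(\psi_\rho,\R^n)\leq C\rho^{n-1}$. This is where the assumption $s<\frac{1}{p}$ enters crucially, as the one-dimensional fractional $p$-energy of a monotone step-like transition converges exactly in this range. The new feature compared with the nondegenerate setting is that the transition profile must be matched to the flatness of $W$ near $-1$: since $W(t)\sim(1+t)^m$ with $m\geq p$, a plain Lipschitz transition would incur an unaffordable potential cost, and instead the profile must be chosen so that $\int W(\psi_\rho)\,dx\leq C\rho^{n-1}$ as well. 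Using the competitor $\bar u:=\min(u,\psi_\rho)$ (which coincides with $u$ outside $B_\rho$ and satisfies $|\bar u|\leq 1$, hence is admissible), minimality gives
\[
\mathcal{K}_s^p(u,\Omega)-\mathcal{K}_s^p(\bar u,\Omega)\leq\int_{B_\rho}\bigl(W(\bar u)-W(u)\bigr)\,dx,
\]
and the standard $p$-truncation inequality bounds the left-hand side below by $\mathcal{K}_s^p(\max(u,\psi_\rho),\R^n)-\mathcal{K}_s^p(\psi_\rho,\R^n)$.

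The second step is to apply the fractional Sobolev inequality to the nonnegative function $v:=(u-\psi_\rho)_+$, essentially supported in $B_\rho$, in the critical form $\|v\|_{L^{p^*_s}(\R^n)}^p\leq C\,\mathcal{K}_s^p(v,\R^n)$ with $p^*_s=\tfrac{np}{n-sp}$. On the set $\{u>\theta_2\}\cap B_\rho$ the function $v$ is bounded below by a positive constant depending on $\theta_*$ and $\theta_2$, so its $L^{p^*_s}$-norm controls from below a fractional power of $\mathcal{L}^n(B_\rho\cap\{u>\theta_2\})$. On the other hand, using~\eqref{potential}, the potential gap $W(\bar u)-W(u)$ is nonpositive on $\{u>\theta^*\}$ and is dominated by a multiple of $|1+u|^m$ on the intermediate band $\{\theta_*<u\leq\theta^*\}$, while on $\{u\leq\theta_*\}$ it comes entirely from $W(\psi_\rho)$ and is absorbed in the $\rho^{n-1}$ barrier term. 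Combining these ingredients and integrating over $\rho\in[r,r+1]$ produces a discrete functional inequality of the form
\[
\Phi(r+1)-\Phi(r)\geq \tilde c\,\Phi(r)^{\alpha}-C r^{n-1},
\]
with $\alpha\in(0,1)$ built out of the Sobolev exponent $p/p^*_s$.

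Finally, a classical iteration lemma applied to this recursion, together with the initial condition~\eqref{Fcon-812345} which guarantees $\Phi(r_0)\geq c_0$, propagates the estimate to $\Phi(r)\geq \tilde c\,r^n$ for all $r\geq R^*$, with $R^*$ chosen large enough to render the error $r^{n-1}$ negligible against the main term. The principal obstacle is precisely the first step: producing barriers whose fractional $p$-energy, potential cost, and transition profile are all simultaneously controlled in the presence of a degenerate $W$, and then arranging the reabsorption of the intermediate-band contribution into $\Phi$ itself. It is exactly this reabsorption that forces the first summand in~\eqref{benz} to appear on the left-hand side of the final statement, in contrast to the nondegenerate case where only the volume term is needed.
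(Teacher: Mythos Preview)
Your high-level architecture (barrier competitor, energy comparison, Sobolev, iteration) matches the paper's, but two of the central steps do not work as you describe them, and they are precisely where the degenerate potential forces a different argument.

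\textbf{The barrier and the potential gap.} You propose a transition-layer barrier $\psi_\rho$ with $\psi_\rho\le\theta_*$ on $B_{\rho-1}$, $\psi_\rho\equiv 1$ off $B_\rho$, and $\mathcal{K}_s^p(\psi_\rho,\R^n)\le C\rho^{n-1}$ (this bound is in any case $\rho^{n-sp}$, not $\rho^{n-1}$). You then assert that $W(\bar u)-W(u)\le 0$ on $\{u>\theta^*\}$. This is false: if $u$ is near $+1$ and $\psi_\rho$ sits near $0$, then $\bar u=\psi_\rho$ and $W(\bar u)-W(u)$ is large and positive; nothing in~\eqref{potential} prevents this. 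The paper avoids this obstruction by building a barrier of a completely different type: not an energy-bounded layer, but a \emph{subsolution} $w$ of $-(-\Delta)_p^s w\le \tau(1+w)^{m-1}$ on $B_R$ with the precise decay $1+w(x)\lesssim (1+R-|x|)^{-ps/(m-1)}$ (Theorem~\ref{lem:1}). Testing minimality against $v=\min\{u,w\}$ and using the Lindqvist pointwise inequalities for $|a|^p-|b|^p$, the fractional $p$-Laplacian of $w$ appears and is killed by the subsolution property together with~\eqref{potential1}; what survives is bounded by $\int_{\{u>\max\{\theta_*,w\}\}}(1+w)^{m-1}$, which in turn is controlled by the barrier decay. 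Your energy-only barrier cannot produce this cancellation.

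\textbf{The iteration.} Your recursion $\Phi(r+1)-\Phi(r)\ge \tilde c\,\Phi(r)^{\alpha}-Cr^{n-1}$ with $\alpha=(n-sp)/n$ is inconsistent with the conclusion $\Phi(r)\sim r^n$ in the regime $sp<1$: plugging in $\Phi(r)=cr^n$ makes the right-hand side grow like $r^{n-sp}$, which dominates the left-hand side $\sim r^{n-1}$. Since $\Phi(r)\le |B_r|$, the recursion cannot hold as written. The paper instead obtains, from the barrier decay and the coarea formula, a pointwise bound $V(R-k)^{(n-sp)/n}\le c\int_0^R(1+R-t)^{-sp}V'(t)\,dt$ for $V(t)=\mathcal L^n(B_t\cap\{u>\theta_*\})$, integrates this over a \emph{proportional} window $R\in[\rho,\tfrac32\rho]$ (not a unit window), and arrives at the multiplicative inequality $r^{sp}V(r)^{(n-sp)/n}\le \hat c\,V(2r)$, which is exactly the hypothesis of the iteration Lemma~\ref{GE-XE-JU}. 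The integral term in~\eqref{benz} then enters only at the very last line, by splitting $\{u>\theta_*\}=\{u>\theta^*\}\cup\{\theta_*<u\le\theta^*\}$ and trivially bounding the measure of the second set by $(1+\theta_*)^{-m}\int|1+u|^m$; it is not carried through the iteration as part of $\Phi$.
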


Here above and in the rest of the paper, $\mathcal{L}^{n}(E)$ denotes
the Lebesgue measure of a set~$E\subset\R^n$.

The proof of the density estimates in Theorem~\ref{th:fracp>=2} relies
on the construction of a suitable barrier (see 
Theorem~\ref{lem:1} below) that is inspired by Lemma~3.1 in~\cite{3},
where a barrier is built in the case~$p=2$ and~$m=2$.
The extension to the cases~$p\neq 2$ and~$m\in \left[p,+\infty\right)$
that we provide in this paper
is nontrivial, due to the nonlinearity of the operator and the precence of
a degenerate potential. 
In the case of the classical~$p$-Laplacian, barriers have been built
in~\cite{MR2228294} in a different context.
The case~$p=2$ in the nonlocal setting with a degenerate potential has been
also considered in~\cite{Depas}.

The presence of the first term in the left-hand side of~\eqref{benz} is a consequence of the weak lower bound in~\eqref{potential}. More precisely, under some additional assumptions on the lower bound for~$W$ (see~\eqref{condiWzione} below) it is possible to reabsorb such term in the right hand-side of the inequality, obtaining the ``full density estimates''. This is made possible thanks to the following upper bound on the energy~$\mathcal{E}_s^p$. Its proof is contained in the forthcoming paper~\cite{DFVERPP}.  

\begin{thm}[\cite{DFVERPP}]\label{erfvbgt-098}
Let~$p\in (1,+\infty)$, $s\in \left(0,\frac{1}{p}\right)$ and~$u$ be a minimizer of~$\mathcal{E}_s^p$ in~$X^{s,p}(B_{R+2})$ with~$R\geq 2$. Then, there exists~$\bar{C}>0$, depending only on~$s$, $n$, $p$, $m$ and~$\Lambda$, such that 
\begin{equation}\label{BFAOTE}
\mathcal{E}_s^p(u,B_R)\leq \bar{C} R^{n-sp}.
\end{equation} 
\end{thm}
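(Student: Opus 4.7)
The natural strategy is a single competitor comparison: find $v \in X^{s,p}(B_{R+2})$ agreeing with $u$ outside $B_{R+2}$ and with $\mathcal{E}_s^p(v, B_{R+2}) \leq C R^{n-sp}$, then invoke minimality. The simplest admissible choice is
\[
v(x) := \begin{cases} -1 & \text{if } x \in B_{R+2}, \\ u(x) & \text{if } x \in \R^n \setminus B_{R+2}, \end{cases}
\]
which satisfies $\|v\|_{L^\infty} \leq 1$ and lies in $X^{s,p}(B_{R+2})$ once its kinetic term is seen to be finite (shown below).

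Before applying minimality, I would record the monotonicity $\mathcal{E}_s^p(u, B_R) \leq \mathcal{E}_s^p(u, B_{R+2})$. Decomposing each energy according to whether the integration variables lie in $B_R$, in the annulus $A := B_{R+2} \setminus B_R$, or in $\R^n \setminus B_{R+2}$, the difference of the two energies is
\[
\tfrac{1}{2}\iint_{A \times A} \frac{|u(x)-u(y)|^p}{|x-y|^{n+sp}}\,dx\,dy + \iint_{A \times (\R^n \setminus B_{R+2})} \frac{|u(x)-u(y)|^p}{|x-y|^{n+sp}}\,dx\,dy + \int_A W(u)\,dx,
\]
which is manifestly nonnegative. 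Hence it suffices to bound $\mathcal{E}_s^p(u, B_{R+2})$ from above.

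For the competitor $v$ above, both the diagonal double integral $\tfrac{1}{2}\iint_{B_{R+2} \times B_{R+2}}$ and the potential term of $\mathcal{E}_s^p(v, B_{R+2})$ vanish, since $v \equiv -1$ on $B_{R+2}$ and $W(-1) = 0$. Only the complement term survives, and using $|1+u(y)| \leq 2$ it is controlled by
\[
2^p \int_{B_{R+2}} \int_{\R^n \setminus B_{R+2}} \frac{dx\,dy}{|x-y|^{n+sp}}.
\]
This is the fractional $(sp)$-perimeter of $B_{R+2}$; the rescaling $x = (R+2)\,x'$, $y = (R+2)\,y'$ shows that it equals $(R+2)^{n-sp}$ times the dimensional integral $\iint_{B_1 \times (\R^n \setminus B_1)} |x'-y'|^{-n-sp}\,dx'\,dy'$. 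The latter is finite precisely because $sp < 1$: integrating out the exterior variable yields the estimate $C \int_{B_1} (1-|x'|)^{-sp}\,dx'$, which converges exactly under this hypothesis. This is the only point in the argument where the assumption $s \in (0, 1/p)$ is used.

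Combining the above, $\mathcal{E}_s^p(v, B_{R+2}) \leq C\,(R+2)^{n-sp} \leq \bar{C}\,R^{n-sp}$ for $R \geq 2$, and then minimality together with the preceding monotonicity yields the claim. The only substantive computation is the nonlocal perimeter of a ball, which is entirely routine given the integrability threshold $sp < 1$, so I would not expect any serious obstacle; in particular the argument is uniform in $p \in (1,+\infty)$ and does not rely on the detailed structure of $W$ beyond $W(-1)=0$ and the trivial $W \leq \Lambda\,2^m$.
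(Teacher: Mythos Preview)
Your argument is correct. Note, however, that the present paper does not contain a proof of this statement: Theorem~\ref{erfvbgt-098} is quoted from the forthcoming companion work~\cite{DFVERPP} and is used here only as a black box in the proof of Corollary~\ref{coro-09}. There is thus no proof in this paper to compare against.

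On its own merits your competitor argument is the standard one for such energy upper bounds and is complete. The monotonicity $\mathcal{E}_s^p(u,B_R)\le\mathcal{E}_s^p(u,B_{R+2})$ follows exactly as you write (equivalently, from the representation $\mathcal{K}_s^p(u,\Omega)=\tfrac12\iint_{(\R^n\times\R^n)\setminus(\Omega^c\times\Omega^c)}$, which is monotone in $\Omega$). The competitor $v\equiv -1$ on $B_{R+2}$ kills both the diagonal kinetic term and the potential term, since $W(-1)=0$, and the surviving cross term is dominated by the $sp$-perimeter of $B_{R+2}$, which scales as $(R+2)^{n-sp}$ and is finite precisely because $sp<1$. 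Your constant is in fact slightly better than advertised: it depends only on $n$, $s$, $p$, since neither $\Lambda$ nor $m$ enters for this particular competitor.
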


{F}rom Theorems~\ref{th:fracp>=2} and~\ref{erfvbgt-098} we obtain the  following density estimate:

\begin{cor}\label{coro-09}
Let~$\theta_1$, $\theta_2\in (-1,1)$ and~$u$ be as in the statement of~Theorem~\ref{th:fracp>=2}. 

Assume that, for every~$\mu\in (-1,1)$, there exists~$\lambda_\mu\in (0,+\infty)$ such that, for every~$x\in (-1,1)$,
\begin{equation}\label{condiWzione}
\lambda_\mu \chi_{\left(-\infty,\mu\right]}(x)\left|1+x\right|^m  \leq W(x).
\end{equation}
Moreover, let~$\theta_*$ and~$\theta^*$ be as in~\eqref{liibfdcvre5555}. 

Then, there exist~$\widetilde{R}:=\widetilde{R}_{s,n,p,m,\theta_*,r_0,\lambda_{\theta^*}}\in [r_0,+\infty)$ and~$\hat{c}:=\hat{c}_{s,n,p,m,\Lambda,c_1,\theta_*,r_0,c_0}\in (0,1)$ such that, for any~$r\in \left[\widetilde{R},+\infty\right)$ with~$B_{\frac{3r}{2}}\subset \Omega$,
\begin{equation}\label{benzina}
\mathcal{L}^n(B_r\cap \left\lbrace u>\theta_2 \right\rbrace)\geq \hat{c}\,r^n.
\end{equation}
\end{cor}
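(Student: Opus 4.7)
The plan is to invoke Theorem~\ref{th:fracp>=2} and absorb the first term in \eqref{benz} into the right-hand side by using the sharper lower bound \eqref{condiWzione} together with the energy upper bound from Theorem~\ref{erfvbgt-098}. First I would apply Theorem~\ref{th:fracp>=2} directly to obtain, for some $c = c_{m,\theta_*}$ and $\tilde c$ as in the theorem,
\begin{equation*}
c\int_{B_r\cap \{\theta_{*}<u\le \theta^*\}}\left|1+u(x)\right|^m\,dx+\mathcal{L}^n(B_r\cap \{u>\theta_2\})\ge \tilde{c}\, r^n
\end{equation*}
for every $r\ge R^*$ with $B_{3r/2}\subset\Omega$.

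Next, since $\theta^*\in(-1,1)$ (being the maximum of three numbers in $(-1,1)$), the assumption \eqref{condiWzione} applied with $\mu=\theta^*$ yields $\lambda_{\theta^*}\left|1+x\right|^m\le W(x)$ for every $x\in(-1,\theta^*]$. Therefore, on the set $\{u\le\theta^*\}$,
\begin{equation*}
\left|1+u(x)\right|^m\le \frac{1}{\lambda_{\theta^*}}\,W(u(x)),
\end{equation*}
so
\begin{equation*}
\int_{B_r\cap \{\theta_{*}<u\le \theta^*\}}\left|1+u(x)\right|^m\,dx\le \frac{1}{\lambda_{\theta^*}}\int_{B_r} W(u(x))\,dx \le \frac{1}{\lambda_{\theta^*}}\,\mathcal{E}_{s}^p(u,B_r).
\end{equation*}

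Now I would use Theorem~\ref{erfvbgt-098}: provided $r\ge 4$ (so that $B_{r+2}\subset B_{3r/2}\subset\Omega$, and by Remark~\ref{lfvc} the function $u$ is a minimizer on $B_{r+2}$), we get $\mathcal{E}_s^p(u,B_r)\le \bar C\, r^{n-sp}$. Combining these two displays,
\begin{equation*}
c\int_{B_r\cap \{\theta_{*}<u\le \theta^*\}}\left|1+u(x)\right|^m\,dx \le \frac{c\,\bar C}{\lambda_{\theta^*}}\,r^{n-sp}.
\end{equation*}
Because $sp\in(0,1)$, we have $r^{n-sp}=o(r^n)$ as $r\to+\infty$, so one can choose $\widetilde R$ large enough (depending only on $s$, $n$, $p$, $m$, $\theta_*$, $r_0$ and $\lambda_{\theta^*}$, besides the constants from Theorem~\ref{th:fracp>=2}) so that, for every $r\ge \widetilde R$,
\begin{equation*}
\frac{c\,\bar C}{\lambda_{\theta^*}}\,r^{n-sp}\le \frac{\tilde c}{2}\,r^n.
\end{equation*}

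Plugging this back into the estimate from Theorem~\ref{th:fracp>=2} yields $\mathcal{L}^n(B_r\cap\{u>\theta_2\})\ge \frac{\tilde c}{2}\,r^n$, and setting $\hat c:=\tilde c/2$ concludes the proof. There is really no serious obstacle here: the whole point is that the degenerate term $\int|1+u|^m$ appearing in \eqref{benz} has lower-order growth $r^{n-sp}$ once one can dominate it by the energy, and \eqref{condiWzione} is exactly what permits this domination on the truncation set $\{u\le\theta^*\}$. The mildly delicate point is bookkeeping the dependence of $\widetilde R$ on $\lambda_{\theta^*}$ and on the already-quantified constants $R^*$, $\tilde c$, $c$, $\bar C$, so that the final $\hat c$ depends only on the parameters listed in the statement.
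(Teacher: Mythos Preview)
Your proof is correct and follows essentially the same approach as the paper: bound the integral term in \eqref{benz} by $\lambda_{\theta^*}^{-1}\int_{B_r}W(u)\le \lambda_{\theta^*}^{-1}\mathcal{E}_s^p(u,B_r)\le \lambda_{\theta^*}^{-1}\bar C\,r^{n-sp}$ via \eqref{condiWzione} and Theorem~\ref{erfvbgt-098}, then absorb this lower-order term for $r$ large. Your justification that $B_{r+2}\subset B_{3r/2}$ for $r\ge 4$ and your explicit choice $\hat c=\tilde c/2$ match the paper's reasoning exactly.
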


As already mentioned, an important consequence of the
density estimates is the uniform convergence of the interface of the minimizers of the rescaled energy functional to a hypersurface. To prove this result we require an additional condition on the potential~$W$. In particular, we assume that there exists some~$c_2,c_3\in (0,+\infty)$ such that for every~$x\in (-1,1)$
\begin{equation}\label{q874brygtefd}
c_3(1-x^2)^m\geq W(x)\geq c_2(1-x^2)^m .
\end{equation}

\begin{thm}[Compactness]\label{gliutgbedgty}
Let~$p\in(1,+\infty)$, $s\in \left(0,\frac{1}{p}\right)$ and~$W$ satisfy~\eqref{q874brygtefd}. Assume that the sequence~$\{u_\epsilon \}_\epsilon \subset X^{s,p}(\Omega)$ satisfies
\begin{equation}\label{unifbound}
\sup_{\epsilon\in(0,1)} \mathcal{F}_{s,\epsilon}^p(u_\epsilon,\Omega)<+\infty.
\end{equation}
Then, there exists a measurable set~$E\subset\R^n$ such that, up to a subsequence,
\begin{equation}
u_\epsilon\to u^* :=\chi_{E}-\chi_{E^c}\quad \mbox{in}\quad L_{\textrm{loc}}^1(\Omega).
\end{equation} 
\end{thm}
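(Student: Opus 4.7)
The plan is to exploit the two pieces of information contained in~\eqref{unifbound}: the uniform bound on the kinetic term~$\mathcal{K}_s^p(u_\epsilon,\Omega)$ and the fact that~$\int_\Omega W(u_\epsilon(x))\,dx\leq C\epsilon^{sp}\to 0$, both of which follow from the nonnegativity of the two contributions to~$\mathcal{F}_{s,\epsilon}^p$. The first bound, together with the trivial inequality
\begin{equation*}
[u_\epsilon]_{W^{s,p}(\Omega')}^p=\int_{\Omega'}\int_{\Omega'}\frac{|u_\epsilon(x)-u_\epsilon(y)|^p}{|x-y|^{n+sp}}\,dx\,dy\leq 2\,\mathcal{K}_s^p(u_\epsilon,\Omega),
\end{equation*}
valid for every bounded Lipschitz~$\Omega'\Subset\Omega$, and the pointwise constraint~$\|u_\epsilon\|_{L^\infty(\R^n)}\leq 1$, implies that~$\{u_\epsilon\}$ is uniformly bounded in~$W^{s,p}(\Omega')$. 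Since~$sp<1\leq n$, the fractional Rellich-Kondrachov theorem yields the compact embedding~$W^{s,p}(\Omega')\hookrightarrow L^p(\Omega')\hookrightarrow L^1(\Omega')$. Exhausting~$\Omega$ by an increasing family of such Lipschitz sets~$\Omega'_k\Subset\Omega$ and running a diagonal extraction, one obtains a subsequence (still denoted~$u_\epsilon$) and a measurable function~$u^*:\Omega\to\R$ such that~$u_\epsilon\to u^*$ in~$L^1_{\textrm{loc}}(\Omega)$ and, upon a further extraction, almost everywhere in~$\Omega$.

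Along the pointwise convergent subsequence, Fatou's lemma together with the vanishing of the potential energies gives
\begin{equation*}
\int_\Omega W(u^*(x))\,dx\leq \liminf_{\epsilon\to 0^+}\int_\Omega W(u_\epsilon(x))\,dx=0,
\end{equation*}
so that~$W(u^*)=0$ almost everywhere in~$\Omega$. The assumption~\eqref{q874brygtefd} then forces~$(1-(u^*)^2)^m=0$ a.e., i.e.\ $u^*\in\{-1,1\}$ almost everywhere. Setting~$E:=\{x\in\Omega : u^*(x)=1\}$ (extended arbitrarily outside~$\Omega$), we conclude that~$u^*=\chi_E-\chi_{E^c}$ a.e.\ in~$\Omega$, which is the claimed limit.

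The only delicate ingredient is the fractional Rellich-Kondrachov compact embedding; once that is invoked, the identification of the limit is essentially a direct consequence of the pointwise lower bound on~$W$. Notably, no barrier construction or iteration scheme of the kind required for Theorem~\ref{th:fracp>=2} enters here, since the kinetic term of~$\mathcal{F}_{s,\epsilon}^p$ already plays the role of a full Sobolev-type seminorm at no cost.
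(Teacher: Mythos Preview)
Your proof is correct and follows essentially the same approach as the paper's own proof: both extract a uniform bound on the Gagliardo seminorm on compactly contained subsets from~\eqref{unifbound}, invoke the fractional compact embedding (the paper cites Theorem~7.1 in~\cite{MR2944369}) to obtain~$L^1_{\textrm{loc}}$ and a.e.\ convergence, and then use Fatou's lemma together with the lower bound~\eqref{q874brygtefd} to conclude that the limit takes values in~$\{-1,1\}$. Your write-up is in fact slightly more explicit about the diagonal extraction over an exhaustion of~$\Omega$, which the paper leaves implicit.
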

This compactness result is a consequence of the scaling chosen for~$\mathcal{F}_{s,\epsilon}^p$. Indeed, it follows from~\eqref{mathcalF} and~\eqref{unifbound} that the Gagliardo seminorm of~$u_\epsilon$ is bounded on all compact subsets of~$\Omega$. Then, it is enough to use compact embeddings of~$W^{s,p}$ to conclude. The case~$s\in \left[\frac{1}{p},1\right)$ is more involved, and it will be dealt with in the forthcoming paper~\cite{DFVERPP}.  

As a byproduct of the density estimates in~\eqref{benzina} and the compactness in Theorem~\ref{gliutgbedgty} we obtain the Hausdorff convergence for the interfaces:  

\begin{cor}[Hausdorff convergence]\label{rgeb}
Let~$p\in (1,+\infty)$, $s\in \left(0,\frac{1}{p}\right)$ and~$W$ satisfy~\eqref{q874brygtefd}. For~$\epsilon\to 0^+$ let~$\{ u_\epsilon\}_\epsilon\subset X^{s,p}(\Omega)$  be such that~$u_\epsilon$ is a minimizer of~$\mathcal{F}_{s,\epsilon}^p$ in~$\Omega$ and~\eqref{unifbound} holds. Moreover, let~$E\subset\R^n$ be as in the statement of Theorem~\ref{gliutgbedgty}. 

Then, for every~$\Theta\in (0,1)$, $\delta\in (0,+\infty)$, and~$R\in(0,+\infty)$ satisfying~$B_{R}\Subset \Omega$, there exists~$\epsilon_0:=\epsilon_0(\Theta,\delta,R)>0$ such that, if~$\epsilon\in (0,\epsilon_0)$,
\begin{equation*}
\left\lbrace \left|u_\epsilon\right|<\Theta \right\rbrace\cap B_R \subset \bigcup_{ x \in \partial E} B_{\delta}(x). 
\end{equation*}  
\end{cor}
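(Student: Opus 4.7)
The plan is to argue by contradiction, combining the $L^1_{\mathrm{loc}}$ compactness of Theorem~\ref{gliutgbedgty} with the density estimates of Corollary~\ref{coro-09} applied to suitably rescaled copies of the $u_{\epsilon_k}$. Suppose the statement fails: there exist $\Theta,\delta,R$ as in the conclusion, a subsequence $\epsilon_k\to 0^+$ and Lebesgue density points $x_k\in B_R\cap\{|u_{\epsilon_k}|<\Theta\}$ with $\mathrm{dist}(x_k,\partial E)\geq\delta$. Passing to a further subsequence, $x_k\to x_\infty\in\overline{B_R}$ with $\mathrm{dist}(x_\infty,\partial E)\geq\delta$, so $B_{\delta/2}(x_\infty)$ lies entirely in $E$ or in $E^c$. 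Since~\eqref{q874brygtefd} is symmetric under $x\mapsto -x$, the density estimates can be applied symmetrically to $u_{\epsilon_k}$ and $-u_{\epsilon_k}$; without loss of generality I therefore assume $B_{\delta/2}(x_\infty)\subset E^c$, so that $u^*\equiv-1$ on this ball.

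By Theorem~\ref{gliutgbedgty}, $u_{\epsilon_k}\to -1$ in $L^1(B_{\delta/2}(x_\infty))$. Fix $\theta_2\in(-1,-\Theta/2)$. Since $x_k\to x_\infty$, for $k$ large one has $B_{\delta/4}(x_k)\subset B_{\delta/2}(x_\infty)$, and by Chebyshev's inequality
\[
\mathcal{L}^n\bigl(B_{\delta/4}(x_k)\cap\{u_{\epsilon_k}>\theta_2\}\bigr)\leq\frac{1}{1+\theta_2}\int_{B_{\delta/2}(x_\infty)}(1+u_{\epsilon_k})\,dx\longrightarrow 0 \quad\text{as }k\to\infty.
\]

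To reach a contradiction I would apply Corollary~\ref{coro-09} to the translated and dilated function $v_k(y):=u_{\epsilon_k}(x_k+\epsilon_k y)$, which minimizes $\mathcal{E}_s^p$ on $(\Omega-x_k)/\epsilon_k$. Choosing $\theta_1\in(-1,-\Theta)$, the origin is a Lebesgue density point of $\{|v_k|<\Theta\}\subset\{v_k>\theta_1\}$, which produces $r_0,c_0>0$ with $\mathcal{L}^n(B_{r_0}\cap\{v_k>\theta_1\})>c_0$. Corollary~\ref{coro-09} then yields $\widetilde R,\hat c>0$ with $\mathcal{L}^n(B_r\cap\{v_k>\theta_2\})\geq\hat c\,r^n$ for every $r\geq\widetilde R$. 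Rescaling and taking $r:=\delta/(4\epsilon_k)$ (which exceeds $\widetilde R$ once $\epsilon_k$ is sufficiently small) gives
\[
\mathcal{L}^n\bigl(B_{\delta/4}(x_k)\cap\{u_{\epsilon_k}>\theta_2\}\bigr)\geq\hat c\,(\delta/4)^n>0,
\]
contradicting the vanishing above.

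The step I expect to be the main obstacle is guaranteeing that the constants $r_0,c_0$, and therefore $\widetilde R,\hat c$, can be chosen \emph{uniformly} in $k$: the Lebesgue density argument at a single point might otherwise yield a radius shrinking along the sequence. This is typically handled either by invoking a uniform H\"older modulus of continuity for bounded minimizers of $\mathcal{E}_s^p$ (available from the regularity theory of the fractional $p$-Laplacian), which provides a $k$-independent ball on which $v_k$ stays above $\theta_1$, or by exploiting the scaling properties of Theorem~\ref{th:fracp>=2}, where the effective input is the scale-invariant density $c_0/r_0^n$, to reduce to a single reference scale as in~\cite{3}.
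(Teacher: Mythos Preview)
Your approach is correct and matches the paper's proof: contradiction, rescale $u_{\epsilon_k}$ about $x_k$ to a minimizer of $\mathcal{E}_s^p$, apply Corollary~\ref{coro-09}, and compare with the $L^1$ convergence to $-1$ from Theorem~\ref{gliutgbedgty}. The uniformity gap you flag is exactly the crux, and the paper resolves it via your first suggestion---it invokes the H\"older regularity of Theorem~\ref{Cozzi} (uniform $C^\alpha$ bound on a fixed ball, using $\|w_k\|_{L^\infty}\le 1$ and a uniform tail estimate) to obtain a $k$-independent $r_0$ with $w_k>-\Theta$ on $B_{r_0}$; your second suggestion via scale-invariance of $c_0/r_0^n$ is not pursued and would be more delicate since the rescaling in Remark~\ref{undcond2llop} alters the potential constants.
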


We remark that the results presented in this paper are stated
and proved for all~$p\in(1,+\infty)$, but they are new
even for the case~$p=2$.\medskip

The rest of this paper is structured as follows. 
In Section~\ref{y43iueihdevwjfwe9876543} we recall some known results
that will be used in the proofs of the main statements.
In Section~\ref{desmos} we construct a suitable barrier, see Theorem~\ref{lem:1} below, which will be used to prove Theorem~\ref{th:fracp>=2}. 
The construction of this barrier relies also on some
technical results that are collected in Appendix~\ref{appebi}.

The proofs of Theorem~\ref{th:fracp>=2} and Corollary~\ref{coro-09} are contained in Section~\ref{cardi}. Finally, in Section~\ref{vdcbnhetrslkoiuojhytbirvec}  we prove Theorem~\ref{gliutgbedgty} and Corollary~\ref{rgeb}.

\section{Some auxiliary results}\label{y43iueihdevwjfwe9876543}

In this section we recall some auxiliary results that will be used throughout
the proofs of our main theorems.

\subsection{A general estimate}
The following statement is provided in Lemma~3.2. in~\cite{3}.
We point out that by a careful inspection of the proof presented in~\cite{3}
one can check that the dependance of the constants~$\tilde{c}$
and~$R_{*}$ is as claimed here below (this will be important in the
forthcoming paper~\cite{DFVERPP}).

\begin{lem}[Lemma~3.2. in~\cite{3}]\label{GE-XE-JU}
Let~$\sigma$, $R_0\in (0,+\infty)$, $\nu\in (\sigma,+\infty)$ and~$\gamma$, $\mu$, $C\in (1,+\infty)$.

Let~$V:(0,+\infty)\to (0,+\infty)$ be a nondecreasing function. For any~$r\in [R_0,+\infty)$, let 
\begin{equation*}
\alpha(r):=\min\left\lbrace 1,\frac{\log(V(r))}{\log(r)}\right\rbrace.
\end{equation*} 
Also, suppose that 
\begin{equation}\label{condi1}
V(R_0)\geq \mu
\end{equation}
and that, for any~$r\in [R_0,+\infty)$, 
\begin{equation}\label{epo-vdfceobgt}
r^{\sigma}\alpha(r)\big(V(r)\big)^{\frac{\nu-\sigma}{\nu}}\leq C\,V(\gamma r).
\end{equation}

Then, there exist~$\tilde{c}\in (0,1)$,
depending on~$C$, $\sigma$, $\mu$, $\gamma$, $\nu$ and~$R_0$,
and~$R_{*}\in [R_0,+\infty)$,
depending on~$R_0$ and~$\gamma$,
such that, for any~$r\in [R_*,+\infty)$,
\begin{equation}\label{pmecb.54tb6}
V(r)\geq \tilde{c}\,r^{\nu}.
\end{equation}
\end{lem}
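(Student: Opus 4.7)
The plan is a discrete iteration on the geometric sequence $r_k := \gamma^k R_0$, $k\in\N$, together with a bootstrap on the normalised quantity $a_k := V(r_k)/r_k^\nu$. Writing~\eqref{epo-vdfceobgt} at $r = r_k$ yields
$$a_{k+1} \;\geq\; C^{-1}\gamma^{-\nu}\,\alpha(r_k)\,a_k^{(\nu-\sigma)/\nu}.$$
If the factor $\alpha(r_k)$ were equal to $1$ for every $k$, this would reduce to a contractive recursion $a_{k+1}\geq \kappa a_k^\beta$ with $\kappa := C^{-1}\gamma^{-\nu}$ and $\beta := (\nu-\sigma)/\nu\in(0,1)$. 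The map $a\mapsto \kappa a^\beta$ on $(0,1]$ has a unique attracting fixed point $a_\infty := \kappa^{\nu/\sigma}$, and a routine induction would produce $a_k\geq \min\{a_0,\,a_\infty/2\}$ for $k$ sufficiently large. Passing from the lattice $\{r_k\}$ to an arbitrary $r\in[r_k,r_{k+1}]$ by monotonicity of $V$ then costs only a factor $\gamma^{-\nu}$ in the final constant and yields~\eqref{pmecb.54tb6}.

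The main obstacle is the factor $\alpha(r_k)$, which coincides with $1$ only when $V(r_k)\geq r_k$. In the complementary regime $V(r_k)<r_k$ one has $V(r_k) = r_k^{\alpha(r_k)}$, so the iteration must instead be recast as a recursion on $\alpha_k := \alpha(r_k)$ itself. Taking logarithms in~\eqref{epo-vdfceobgt} one obtains
$$\alpha_{k+1} \;\geq\; \frac{\sigma + \alpha_k(\nu-\sigma)/\nu}{1 + \log\gamma/\log r_k} + \frac{\log\alpha_k - \log C}{\log r_k + \log\gamma}.$$
For $R_0$ large in terms of $C$, $\gamma$, $\nu$, $\sigma$, $\mu$, this is a small perturbation of the deterministic map $\alpha\mapsto \sigma + \alpha(\nu-\sigma)/\nu$, whose unique attracting fixed point is $\alpha=\nu$. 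The initial condition~\eqref{condi1} gives $\alpha_0\geq \min\{1,\log\mu/\log R_0\}>0$, and a quantitative analysis of the perturbed dynamics shows that $\alpha_k$ reaches and then crosses the value $1$ after a finite number $k_0$ of steps, with $k_0$ controlled by the admissible parameters.

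The proof is then completed by concatenating the two phases: run the rough bootstrap on $\alpha_k$ for at most $k_0$ iterations to achieve $V(r_{k_0})\geq r_{k_0}$, then invoke the clean contractive iteration on $a_k$ from $k=k_0$ onward to stabilise $a_k\geq \tilde c$ for every $k\geq k_*$. Setting $R_* := r_{k_*}$ and using monotonicity as above on $[r_k,r_{k+1}]$ delivers~\eqref{pmecb.54tb6} with the claimed dependencies, $\tilde c = \tilde c(C,\sigma,\mu,\gamma,\nu,R_0)$ and $R_* = R_*(R_0,\gamma)$. The most delicate point is the first phase, where one has to verify that the corrective fractions in the recursion for $\alpha_k$ do not overwhelm the drift of the deterministic map; this is ultimately achieved by choosing $R_0$ large enough that $\log\gamma/\log R_0$ and $\log C/\log R_0$ are a definite fraction of $\sigma/\nu$, so that the perturbative terms cannot cancel the built-in growth toward the fixed point $\alpha=\nu$.
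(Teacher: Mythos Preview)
The paper does not prove this lemma; it cites it as Lemma~3.2 of~\cite{3} and only remarks that the stated dependencies of~$\tilde{c}$ and~$R_*$ can be read off from that proof. So there is nothing in the paper to compare against directly. Your two-phase scheme (bootstrap on~$\alpha_k$ until it reaches~$1$, then a clean sublinear recursion on~$a_k$) is indeed the argument in~\cite{3}, and your recursion for~$\alpha_k$ and the identification of the fixed point~$\alpha=\nu$ are correct.

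There are, however, two genuine gaps in the write-up. First, you say the delicate control of the perturbative terms ``is ultimately achieved by choosing~$R_0$ large enough''. But~$R_0$ is a datum of the lemma, not a free parameter. What you actually need is that the error terms~$\log\gamma/\log r_k$ and~$(\log C-\log\alpha_k)/\log r_k$ become small for~$k$ large; so the~$\alpha$-bootstrap should start at some index~$k_1$ (depending on all parameters) with initial condition supplied by monotonicity, $V(r_{k_1})\ge V(R_0)\ge\mu$, hence~$\alpha_{k_1}\ge\min\{1,\log\mu/\log r_{k_1}\}>0$. This is routine but must be said correctly.

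Second, you set~$R_*:=r_{k_*}$ and then claim~$R_*=R_*(R_0,\gamma)$. As written this is false: $k_*$ depends on~$C,\sigma,\mu,\nu$ through both phases of the iteration, so~$r_{k_*}=\gamma^{k_*}R_0$ inherits those dependencies. To obtain the dependency asserted in the lemma you must push everything into~$\tilde{c}$: once~$V(r_k)\ge\tilde{c}_1\,r_k^{\nu}$ for~$k\ge k_*$, use monotonicity on the finitely many earlier indices~$0\le k<k_*$ to get~$V(r_k)\ge\mu\ge(\mu\,r_{k_*}^{-\nu})\,r_k^{\nu}$, and set~$\tilde{c}:=\gamma^{-\nu}\min\{\tilde{c}_1,\,\mu\,r_{k_*}^{-\nu}\}$. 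Then the estimate holds on the full lattice~$\{r_k\}_{k\ge0}$ and, after interpolation, for all~$r\ge R_0$; the only role of~$\gamma$ in~$R_*$ is the passage from the lattice to a general~$r$.
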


\subsection{H\"older regularity for minimizers}\label{CH-HCBob}

In what follows, we let~$F\in L^\infty(\R,\R)$. Also, for every~$u\in \widehat{H}^{s,p}(\Omega)$ we consider the renormalized energy
\begin{equation*}
\bar{\mathcal{E}}_{s}^p(u,\Omega):=(1-s) \mathcal{K}_s^p(u,\Omega)+\int_{\Omega} F(u(x))\,dx.
\end{equation*}
As customary, given some open set~$B\subset\R^n$ and~$\beta\in (0,1)$, we denote the H\"older seminorm of a function~$f:B\to \R$ by
\begin{equation*}
\left[f\right]_{C^\beta(B)}:=\sup_{x,y\in B} \frac{\left|f(x)-f(y)\right|}{\left|x-y\right|^\beta}. 
\end{equation*}
If~$\beta=1$, we denote the Lipschitz seminorm of~$f:B\to \R$ by 
\begin{equation*}
\left[f\right]_{C^{0,1}(B)}:=\sup_{x,y\in B} \frac{\left|f(x)-f(y)\right|}{\left|x-y\right|}.
\end{equation*}
Also, given~$x_0\in\Omega$ and~$R\in\big(0,\operatorname{dist}\,(x_0,\partial \Omega)\big)$, we set   
\begin{equation*}
\mbox{Tail}(u,x_0,R):=\left[(1-s)R^{sp}\int_{\R^n\setminus B_{R(x_0)}}\frac{\left| u(y)\right|^{p-1}}{\left|y-x_0\right|^{n+sp}}\right]^{\frac{1}{p-1}}.
\end{equation*} 

With this notation, we now recall the following result on the H\"older regularity for the minimizers of~$\bar{\mathcal{E}}_{s}^p$. Such result is contained in~\cite{cozzi2017regularity}, where the H\"older regularity is established using fractional De Giorgi classes. 

\begin{thm}[Theorem~6.4 and Proposition~7.5 in~\cite{cozzi2017regularity}]\label{Cozzi}
Let~$n\in \N$, $s_0\in (0,1)$, $s\in \left[s_0,1\right)$ and~$p\in (1,+\infty)$.
Let~$u\in \widehat{H}^{s,p}(\Omega)$ be a minimizer for~$\bar{\mathcal{E}}_s^p$ in~$\Omega$. 

Then, $u\in C_{\textit{loc}}^\alpha(\Omega)$ for some~$\alpha\in (0,1)$. 

Moreover, for any~$x_0\in \Omega$ and~$R\in\left(0, \frac{\operatorname{dist}\,(x_0,\partial \Omega)}{8}\right)$,
\begin{equation*}
\left[u\right]_{C^\alpha(B_R(x_0))}\leq \frac{C}{R^\alpha} \left(\left\|u\right\|_{L^\infty(B_{4R}(x_0))}+\mbox{Tail}(u;x_0,4R)+ R^s\left\|F(u)\right\|_{L^\infty(B_{8R}(x_0))}^\frac{1}{p} \right)
\end{equation*} 
for some~$C\geq 1$. 

The constants~$\alpha$ and~$C$ depend only on~$n$, $p$, $s_0$ when~$n\geq 2$, and also on~$s$ when~$n=1$.
\end{thm}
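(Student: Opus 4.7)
The approach I would take is the De~Giorgi iteration scheme adapted to the fractional setting, following the framework of fractional De~Giorgi classes. The first step is to derive a Caccioppoli-type inequality for truncations. For any $k\in\R$ and any smooth cutoff $\eta$ supported in $B_{R/2}(x_0)$, testing the minimality of $u$ against the competitor $u-\eta^p(u-k)_+$ yields, after standard algebraic manipulations of $|a-b|^{p-2}(a-b)(c-d)$ against $p$-th powers, an estimate controlling $(1-s)[(u-k)_+\eta]^p_{W^{s,p}}$ in terms of a bulk term involving $|\{u>k\}\cap B_R|$, $\|F(u)\|_{L^\infty(B_R)}$, and a nonlocal tail contribution of the form
\begin{equation*}
(1-s)\sup_{x\in\operatorname{supp}\eta}\int_{\R^n\setminus B_R}\frac{(u(y)-k)_+^{p-1}}{|y-x_0|^{n+sp}}\,dy,
\end{equation*}
which is precisely $\operatorname{Tail}(u;x_0,R)^{p-1}$ up to the prefactor $R^{-sp}$.

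Next, combining this Caccioppoli inequality with the fractional Sobolev embedding $W^{s,p}\hookrightarrow L^{p^*_s}$ when $sp<n$ (or its $sp\ge n$ variant) and iterating on levels $k_j=k+\ell(1-2^{-j})$ gives the standard De~Giorgi lemma: if the density of $\{u>k\}$ in $B_R$ is below a small threshold, then $u\leq k+\ell$ on a smaller ball, producing the local $L^\infty$ bound. The heart of the proof is the oscillation reduction. Writing $M:=\sup_{B_R}u$, $m:=\inf_{B_R}u$, $\omega:=M-m$, one of the sets $\{u\geq (M+m)/2\}\cap B_{R/2}$ or $\{u\leq (M+m)/2\}\cap B_{R/2}$ has positive density in $B_{R/2}$; applying the Caccioppoli inequality to the corresponding truncation at a geometric sequence of levels and running a second De~Giorgi iteration inside $B_{R/4}$ forces a strict decay of the form
\begin{equation*}
\operatorname*{osc}_{B_{R/8}(x_0)}u\leq \theta\,\omega + C\operatorname{Tail}(u;x_0,R)+CR^s\|F(u)\|_{L^\infty(B_R)}^{1/p}
\end{equation*}
for some $\theta\in(0,1)$. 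A standard dyadic iteration then converts this geometric oscillation decay into the claimed H\"older estimate with exponent $\alpha=-\log_2\theta$.

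The main obstacle is the propagation of the tail through the iteration: because $\operatorname{Tail}(u;x_0,r)$ depends on $u$ on all of $\R^n\setminus B_r$, it does not automatically shrink as $r$ decreases, and one must bound $\operatorname{Tail}(u;x_0,R/2^j)$ uniformly in $j$ via a dyadic decomposition of $\R^n\setminus B_{R/2^j}$, which naturally produces the combination $\|u\|_{L^\infty(B_{4R})}+\operatorname{Tail}(u;x_0,4R)$ in the final estimate. A secondary subtlety, and the reason for the uniformity in $s\in[s_0,1)$, is that the Sobolev embedding constant for $W^{s,p}$ blows up as $s\to 1$; this is exactly cancelled by the $(1-s)$-renormalization built into $\bar{\mathcal{E}}_s^p$, so all constants depend only on $n$, $p$, $s_0$. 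In dimension $n=1$ with $sp\geq 1$, however, the relevant Morrey-type embedding $W^{s,p}\hookrightarrow C^{s-1/p}$ has an $s$-dependent constant that cannot be absorbed by the renormalization, which is precisely why the constants must depend additionally on $s$ in that case.
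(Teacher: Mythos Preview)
Your sketch is a reasonable outline of the De~Giorgi iteration approach to H\"older regularity in the fractional setting, and it correctly identifies the key structural ingredients: the Caccioppoli inequality with tail, the level-set iteration, the oscillation decay, and the role of the $(1-s)$ renormalization in securing uniformity as $s\to 1$. However, the paper does not prove this statement at all: it is quoted verbatim as Theorem~6.4 and Proposition~7.5 of Cozzi~\cite{cozzi2017regularity}, and the paper only \emph{recalls} it for later use. There is therefore no ``paper's own proof'' to compare against; what you have written is a sketch of the argument in the cited reference, not of anything the present paper does.

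That said, one small point of caution in your sketch: the explanation of the $n=1$ caveat is not quite right. The extra $s$-dependence when $n=1$ in Cozzi's framework does not arise from a Morrey-type embedding replacing Sobolev; rather, in the De~Giorgi class machinery one works with an artificially increased ambient dimension (or equivalently with a modified Sobolev exponent) when $sp\ge n$, and the constants produced by this device genuinely depend on $s$ in dimension one. This is a minor issue of attribution rather than a gap in the argument.
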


\section{Construction of a suitable barrier}\label{desmos}

The following Theorem~\ref{lem:1} is the extension to the cases~$p\neq 2$ and~$m\in \left[p,+\infty\right)$
of Lemma~3.1 in~\cite{3}.

We point out that, in our construction,
the dependence of the constants with respect to~$s$ is explicit. In this way, as it will be discussed in detail in~\cite{DFVERPP}, it is possible to obtain density estimates that are stable as~$s \to 1^-$, recovering local density estimates from the nonlocal ones. 

In what follows, for every~$t\in\R$ we denote by~$[t]$ the integer part of~$t$. Also, for every~$z:\R^n\to \R$ measurable and~$x\in\R^n$, we denote its~$p$-fractional Laplacian at~$x\in\R^n$ as
\begin{equation*}
(-\Delta)_p^s z(x):=\int_{\R^n}\frac{\left(z(x)-z(y)\right)\left|z(x)-z(y)\right|^{p-2}}{\left|x-y\right|^{n+sp}}\,dy.
\end{equation*}
Throughout this section we will also make use of some technical results
collected in Appendix~\ref{appebi}.

\begin{thm}\label{lem:1}
For any~$n\in\N$, $\tau\in (0,+\infty)$, $p\in (1,+\infty)$, $s\in \left(0,1\right)$ and~$m\in \left[p,+\infty\right)$ there exists~$\widehat{C}_{n,p,m}\in (0,+\infty)$ such that 
if we define 
\begin{equation}\label{chnecdloro}
\bar{R}=\bar{R}_{s,\tau,n,p,m}:=\left(\frac{\widehat{C}_{n,p,m}}{s(1-s)\tau}\right)^{\frac{1}{ps}} 
\end{equation}
the following statement holds true.

For every~$R\in \left[\bar{R},+\infty\right)$ there exists a rotationally symmetric function~$w\in C(\R^n,(-1,1])$ such that 
\begin{equation}\label{primacosauwwgyg098765}
w\equiv 1\quad\mbox{in}\quad \R^n\setminus B_R
\end{equation}
and, for every~$x\in B_R$,
\begin{equation}\label{cet}
-(-\Delta)_p^s w(x)\leq \tau (1+w(x))^{m-1}.
\end{equation}
Also, if we set~$q:=\frac{p}{m-1}$ and 
\begin{equation}\label{straimpoconst}
C_{s,\tau,n,p,m}:=2^{q+1}\max\left\lbrace 1, \bar{R}^{qs}\right\rbrace,
\end{equation} then,
for every~$x\in B_R$,
\begin{equation}\label{grw}
1+w(x)\leq\frac{C_{s,\tau,n,p,m}}{\left(1+R-|x|\right)^{qs}}. 
\end{equation}
\end{thm}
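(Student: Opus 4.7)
The plan is to build $w$ as a truncated, rotationally symmetric power profile whose growth exponent makes the two sides of \eqref{cet} scale identically. Setting $q:=p/(m-1)$ and writing $C:=C_{s,\tau,n,p,m}$ as in \eqref{straimpoconst}, I would define
\[ w(x):=\begin{cases} \min\bigl\{1,\,-1+C(1+R-|x|)^{-qs}\bigr\} & \text{if } x\in B_R, \\ 1 & \text{if } x\in\R^n\setminus B_R. \end{cases} \]
Because $C\geq 2^{q+1}\geq 2$ and $(1+R-|x|)^{-qs}\to 1$ as $|x|\to R^-$, the two branches glue continuously inside $B_R$ at some radius $r_0<R$: on $B_{r_0}$ one has $w\equiv 1$, while on the annulus $B_R\setminus B_{r_0}$ the explicit power profile is active. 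The function $w$ is continuous, rotationally symmetric, radially nondecreasing, and takes values in $(-1,1]$; \eqref{primacosauwwgyg098765} holds by definition, and \eqref{grw} follows from $1+w\leq 2\leq C$ together with $1+w\leq C(1+R-|x|)^{-qs}$.

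The heart of the proof is \eqref{cet}. If $|x|\leq r_0$ then $w(x)=1\geq w(y)$ for every $y$, so $(w(y)-w(x))|w(y)-w(x)|^{p-2}\leq 0$ pointwise, giving $-(-\Delta)_p^s w(x)\leq 0\leq \tau(1+w(x))^{m-1}$ at once. For $|x|\in(r_0,R)$, abbreviate $L:=1+R-|x|\in[1,R)$. The choice of exponent is made precisely so that $qs(m-1)=sp$, which turns the right-hand side of \eqref{cet} into $\tau C^{m-1}L^{-sp}$; therefore it suffices to derive an upper bound of the shape
\[ -(-\Delta)_p^s w(x)\leq A_{n,p,m}\,\bigl(s(1-s)\bigr)^{-1}\,C^{p-1}\,L^{-sp}, \]
and then use the definition \eqref{straimpoconst} of $C$ together with the hypothesis $R\geq\bar R$ from \eqref{chnecdloro} to absorb the prefactor into $\tau C^{m-1}$.

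To produce this $L^{-sp}$ bound, I would split the defining integral over the three regions $\{|y|\leq|x|\}$, $\{|x|<|y|\leq R\}$ and $\{|y|>R\}$. Radial monotonicity makes the first region contribute nonpositively to $-(-\Delta)_p^s w(x)$, so it can be dropped from the upper bound. On the middle annulus I would combine the derivative estimate
\[ 0\leq w(y)-w(x)\leq C_{n,p,m}\,C\,L^{-qs-1}\,(|y|-|x|) \]
with the trivial cap $w(y)-w(x)\leq 2$, split further according to whether $|y-x|$ is comparable to $L$ or much smaller, and integrate the singular kernel $|x-y|^{-n-sp}$ using the auxiliary computations from Appendix~\ref{appebi}. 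The tail region $\{|y|>R\}$, where $w(y)=1$ and $|x-y|\geq L-1$, is handled directly by a radial change of variables, producing a contribution of order $(1-w(x))^{p-1}(s(1-s))^{-1}L^{-sp}$.

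The main obstacle will be the bookkeeping of the explicit $s$-dependence of every constant, so that the threshold $\bar R$ indeed scales as $(s(1-s)\tau)^{-1/(sp)}$ as prescribed by \eqref{chnecdloro}. The factor $s(1-s)$ is the standard normalization that makes the constants stable at both endpoints $s\to 0^+$ and $s\to 1^-$, and it must end up in exactly the right place after the kernel integrations. Moreover, the nonlinearity of $z\mapsto |z|^{p-2}z$ forces the middle-annulus estimate to be treated separately in the ranges $p\geq 2$ and $p\in(1,2)$; this is where most of the technical work concentrates and where the auxiliary lemmas of Appendix~\ref{appebi} become essential.
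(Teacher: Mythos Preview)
Your candidate barrier and the overall scaling heuristic are the right ones, but the proof as outlined has a genuine gap in the range $sp\ge p-1$, and this is precisely the range that forces the more elaborate construction in the paper.

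First a minor slip: with your definition, $-1+C(1+R-|x|)^{-qs}$ is \emph{increasing} in $|x|$ and exceeds $1$ near $\partial B_R$, so the min equals the power profile on $B_{r_0}$ and equals $1$ on the annulus $B_R\setminus B_{r_0}$, not the other way around. This does not affect the strategy, but your ``trivial'' case is $|x|\ge r_0$, not $|x|\le r_0$.

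The substantive issue is the step ``radial monotonicity makes $\{|y|\le|x|\}$ contribute nonpositively, so it can be dropped.'' Pointwise the integrand is indeed $\le 0$ there, but the remaining integral over $\{|y|>|x|\}$ does not converge on its own once $sp\ge p-1$. Near $x$ one has $w(y)-w(x)\approx \nabla w(x)\cdot(y-x)$, hence $|w(y)-w(x)|^{p-1}\approx c\,|y-x|^{p-1}$ on a half-ball, and $\int_0^\varepsilon r^{p-2-sp}\,dr$ diverges for $sp\ge p-1$. The principal value exists only through the cancellation between the two half-balls, so you cannot discard one of them; equivalently, your Lipschitz estimate $w(y)-w(x)\lesssim L^{-qs-1}|y-x|$ integrated against $|y-x|^{-n-sp}$ is divergent exactly in this regime. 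Since for every $p\in(1,+\infty)$ the interval $[(p-1)/p,1)$ is nonempty, your argument covers only part of the stated range $s\in(0,1)$.

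The paper handles this by building the auxiliary profile $v$ with Taylor corrections of order $k$ at the inner transition radius (see \eqref{deffk}--\eqref{vdefinition}): $k=1$ when $p\ge 2$, and $k=[sp/(p-1)]+1$ when $p\in(1,2)$ with $s\ge(p-1)/(2p)$. These corrections give $v$ enough regularity that a one-sided second-order bound (Lemma~\ref{p2s01} for $p\ge 2$) or a symmetrized higher-order estimate (Lemma~\ref{ebgfncy654444} for $p<2$) applies, producing \eqref{l.recao5}--\eqref{qw.bgte7756}; the barrier $w$ is then obtained from $v$ by an affine rescaling that converts the constant $\widetilde{C}/(s(1-s))$ into the prescribed $\tau$. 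If you want to keep your simpler profile, you would need to replace the ``drop the negative half'' step by a genuine symmetrization (pairing $y$ with $2x-y$, or with the reflection through $\partial B_{|x|}$) and then control the second-order defect; doing so essentially reproduces the appendix machinery, and the corner of your $w$ at $|x|=r_0$ will still require the smoothing/extension argument around \eqref{pqasnyrve56}--\eqref{illliadfre50123}.
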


The proof of Theorem~\ref{lem:1} will be based on the following construction.
We take a large~$r\in \left[1,+\infty\right)$, to be conveniently chosen with respect to~$R$ and~$\tau$ here below (see formula~\eqref{gldini}).

We consider the integer quantity  
\begin{equation}\label{deffk}
k=k(s,p):=
\begin{dcases}
0 \quad &\mbox{if}\quad p\in (1,2)\mbox{ and }s\in \left(0,\frac{p-1}{2p}\right),\\
\left[\frac{sp}{p-1}\right]+1\quad &\mbox{if}\quad p\in (1,2)\mbox{ and }s\in \left[\frac{p-1}{2p},1\right),\\
1  \quad &\mbox{if}\quad p\in [2,+\infty)\mbox{ and }s\in \left(0,1\right).
\end{dcases}
\end{equation}

We set~$q:=\frac{p}{m-1}$ and we define, for every~$t\in (0,r)$,
\begin{equation}\label{fuhweoiu34ii43ty43ugijkebjwk7684903}
\begin{split}
g(t) & :=t^{-qs}\\{\mbox{and }}\quad
f(t) & := g(r-t)+\sum_{i=0}^{k} \frac{(-1)^{i+1}}{i!}g^{(i)}\left(\frac{r}{2}\right)\left(t-\frac{r}{2}\right)^i.\end{split}\end{equation}
Furthermore, we also define the functions
\begin{equation}\label{vdefinition}\begin{split}
h(t) &:=\begin{dcases}
0\quad &\mbox{if}\quad t\in (0,r/2],\\
\min \left\lbrace f(t),1\right\rbrace  \quad &\mbox{if}\quad t\in (r/2,r),\\
1&\mbox{if}\quad t\in [r,+\infty)
\end{dcases}\\ {\mbox{and }}\quad
v(x) & :=h(\left|x\right|)\quad\mbox{for every}\quad x\in\R^n.
\end{split}
\end{equation}

We provide the following estimate for the function~$h$:

\begin{prop}\label{MLNINT}
Let~$s\in (0,1)$, $p\in (1,+\infty)$, $m\in \left[p,+\infty\right)$ and~$q:=\frac{p}{m-1}$. Let~$h:(0,+\infty)\to [0,1]$ be as in~\eqref{vdefinition}.

Then, there exists~$\bar{c}_{k,q,s}\in(1,+\infty)$ such that, for every~$t\in (0,+\infty)$,
\begin{equation}\label{honda}
\min\left\lbrace 1,(r-t)^{-ps} \right\rbrace\leq (h(t)+\bar{c}_{k,q,s}r^{-qs})^{m-1}.
\end{equation}
\end{prop}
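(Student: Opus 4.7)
The plan is to verify the inequality \eqref{honda} separately on the three intervals on which $h$ is defined, exploiting the fact that the exponents on the two sides are coupled by the identity $q(m-1)=p$ coming from $q:=p/(m-1)$. This identity is the whole point of the construction: it says $g(r-t)^{m-1}=(r-t)^{-qs(m-1)}=(r-t)^{-ps}$, so the claim, after taking $(m-1)$-th roots of positive quantities, becomes essentially $g(r-t)\le h(t)+\bar{c}_{k,q,s}\, r^{-qs}$.

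The two trivial regimes are $t\in(0,r/2]$ and $t\in[r,+\infty)$. For $t\in(0,r/2]$, one has $h(t)=0$ and $r-t\ge r/2$, so
\[
\min\{1,(r-t)^{-ps}\}\le (r/2)^{-ps}=\bigl(2^{qs}\,r^{-qs}\bigr)^{m-1}\le \bigl(\bar{c}_{k,q,s}\,r^{-qs}\bigr)^{m-1}
\]
provided $\bar{c}_{k,q,s}\ge 2^{qs}$. For $t\in[r,+\infty)$, $h(t)=1$ and the left-hand side is at most $1\le (1+\bar{c}_{k,q,s}\,r^{-qs})^{m-1}$, where we interpret $(r-t)^{-ps}$ as $+\infty$ at $t=r$ (and the left-hand side as $1$ beyond).

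The substantial case is $t\in(r/2,r)$. If $f(t)\ge 1$, then $h(t)=1$ and one argues as in the previous paragraph. If instead $f(t)\in[0,1)$, then $h(t)=f(t)$, and since both sides are nonnegative we may raise to the power $m-1$ to reduce the claim to the single inequality
\[
g(r-t)-f(t)\le \bar{c}_{k,q,s}\,r^{-qs}.
\]
The point is that, by the very definition of $f$ in \eqref{fuhweoiu34ii43ty43ugijkebjwk7684903},
\[
g(r-t)-f(t)=\sum_{i=0}^{k}\frac{(-1)^{i}}{i!}\,g^{(i)}(r/2)\,(t-r/2)^{i}
\]
is exactly the $k$-th Taylor polynomial of $t\mapsto g(r-t)$ centred at $r/2$. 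Using the closed form $g^{(i)}(r/2)=(-1)^i\, qs(qs+1)\cdots(qs+i-1)\,(r/2)^{-qs-i}$, the two factors of $(-1)^i$ cancel and every summand becomes a nonnegative purely combinatorial constant times $(r/2)^{-qs-i}(t-r/2)^{i}$; since $t-r/2<r/2$ on this range, each factor $(r/2)^{-qs-i}(t-r/2)^{i}$ is bounded by $(r/2)^{-qs}$. Summing for $i=0,\dots,k$ then gives $g(r-t)-f(t)\le C_{k,q,s}\,(r/2)^{-qs}=C_{k,q,s}\,2^{qs}\,r^{-qs}$, for a constant $C_{k,q,s}>0$ depending only on $k$, $q$, $s$.

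The only step beyond routine bookkeeping is this last Taylor-type estimate; the key observation is the elementary cancellation $(t-r/2)^{i}\,(r/2)^{-qs-i}\le (r/2)^{-qs}$ on $(r/2,r)$, which makes every term of the polynomial correction scale like $r^{-qs}$ and therefore produces an error of exactly the right order. Setting $\bar{c}_{k,q,s}:=\max\bigl\{2,\,2^{qs},\,C_{k,q,s}\,2^{qs}\bigr\}$ then makes all three cases fit simultaneously and ensures $\bar{c}_{k,q,s}\in(1,+\infty)$ as required.
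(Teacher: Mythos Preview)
Your proof is correct and follows essentially the same route as the paper: split into the regimes $t\in(0,r/2]$, $h(t)=1$, and $t\in(r/2,r)$ with $h(t)=f(t)<1$; in the last case use that $g(r-t)-f(t)$ is the $k$-th Taylor polynomial of $t\mapsto g(r-t)$ at $r/2$, bound each term by $c_{q,s,i}(r/2)^{-qs}$ via $(t-r/2)^i\le (r/2)^i$, and sum. The paper's constant is $\bar{c}_{k,q,s}=\max\{2^{qs},\,2^{qs}c_{k,q,s}\}$ with $c_{k,q,s}=\sum_{i=0}^k c_{q,s,i}$, which coincides with your $C_{k,q,s}2^{qs}$ (and is already $>1$ since $c_{k,q,s}\ge c_{q,s,0}=1$); your slight refinement of noting the exact sign cancellation $(-1)^i\cdot(-1)^i=1$ rather than passing to absolute values is cosmetic. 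One wording quibble: you write ``raise to the power $m-1$'' when you mean ``take $(m-1)$-th roots''.
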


\begin{proof}
We observe that if~$t\in \left(\frac{r}{2},r\right)$ and~$h(t)<1$, then
\begin{equation}\label{fr-fe-87}
\begin{split}
h(t)&=f(t)\\
&=g(r-t)+\sum_{i=0}^{k} \frac{(-1)^{i+1}}{i!}g^{(i)}\left(\frac{r}{2}\right)\left(t-\frac{r}{2}\right)^i\\
&\geq (r-t)^{-qs}-\sum_{i=0}^{k} \frac{\left|g^{(i)}\left(\frac{r}{2}\right)\right|}{i!} \left(t-\frac{r}{2}\right)^i\\
&\geq  (r-t)^{-qs}-\sum_{i=0}^{k}
c_{q,s,i}\left(\frac{r}{2}\right)^{-qs-i}\left(\frac{r}{2}\right)^i\\
&=(r-t)^{-qs}-c_{k,q,s}\left(\frac{r}{2}\right)^{-qs},
\end{split}
\end{equation}
where
\begin{equation}\label{contevbdkop}
c_{q,s,i}:=\frac{\displaystyle \prod_{j=0}^{i-1} (qs+j)}{i!} \quad\mbox{and}\quad c_{k,q,s}:=\sum_{i=0}^k c_{q,s,i}.
\end{equation}
On this account, if~$t\in \left( \frac{r}{2},r\right)$ and~$h(t)<1$ and we adopt the notation 
\begin{equation}\label{8132efdkunh}
\tilde{c}_{k,q,s}:=2^{qs}c_{k,q,s}, 
\end{equation}
we can use~\eqref{fr-fe-87} to deduce that 
\begin{equation}\label{clero}
(h(t)+\tilde{c}_{k,q,s}r^{-qs})^{\frac{p}{q}}\geq ((r-t)^{-qs})^{\frac{p}{q}}=(r-t)^{-ps}\geq \min\left\lbrace 1,(r-t)^{-ps} \right\rbrace.
\end{equation}

Moreover, if~$h(t)=1$, then we have that 
\begin{equation}\label{sentiero}
(h(t)+\tilde{c}_{k,q,s}r^{-qs})^{m-1}=(1+\tilde{c}_{k,q,s}r^{-qs})^{m-1}\geq 1\geq  \min\left\lbrace 1,(r-t)^{-ps} \right\rbrace.
\end{equation}
Finally, if~$t\in \left(0,\frac{r}{2}\right]$,
\begin{equation}\label{pero}
\begin{split}
(h(t)+ 2^{qs} r^{-qs})^{m-1}&=( 2^{qs}r^{-qs})^{\frac{p}{q}}
=2^{ps} r^{-ps}
\geq  (r-t)^{-ps}\geq  \min\left\lbrace 1, (r-t)^{-ps} \right\rbrace. 
\end{split}
\end{equation}
Gathering these pieces of information, we obtain the estimate in~\eqref{honda}, as desired.

In particular, from~\eqref{clero},~\eqref{sentiero} and~\eqref{pero} we evince that~\eqref{honda} holds true with~$\bar{c}_{k,q,s}$ given by
\begin{equation}\label{LASTORIADELSIORINTENTO}
\bar{c}_{k,q,s}:=\max\left\lbrace 2^{qs}, \tilde{c}_{k,q,s} \right\rbrace.
\end{equation}
This completes the proof of Proposition~\ref{MLNINT}.
\end{proof}

The next observation provides a uniform bound in~$s$
for~$\overline{c}_{k,q,s}^{\frac{1}{qs}}$. For this, 
we define the constant 
\begin{equation}\label{hatcpm}
\hat{c}_{p,m}:=2e^{c_{p,m}^{(1)}}, 
\end{equation}
where 
\begin{equation}\label{c1pm}
c_{p,m}^{(1)}:=\left(\left[\frac{p}{p-1}\right]+2\right)\prod_{j=1}^{\left[\frac{p}{p-1}\right]+1}\left(\frac{p}{m-1}+j\right).
\end{equation}

\begin{lem}\label{hole:lemma}
Let~$p\in (1,+\infty)$, $m\in \left[p,+\infty\right)$ and~$q:=\frac{p}{m-1}$. 
Let~$\bar{c}_{k,q,s}$ be given by Proposition~\ref{MLNINT}.

Then, for every~$s\in (0,1)$,
\begin{equation*}
\overline{c}_{k,q,s}^{\frac{1}{qs}}\leq \hat{c}_{p,m}. 
\end{equation*}
\end{lem}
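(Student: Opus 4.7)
The plan is to trace the definition of $\overline{c}_{k,q,s}$ backwards through \eqref{LASTORIADELSIORINTENTO}, \eqref{8132efdkunh} and \eqref{contevbdkop}, reducing the claim to a one-line bookkeeping estimate on the truncated series defining $c_{k,q,s}$. The key initial observation is that $c_{q,s,0}=1$ (empty product in~\eqref{contevbdkop}), so $c_{k,q,s}\geq 1$, whence $\overline{c}_{k,q,s}=\max\{2^{qs},2^{qs}c_{k,q,s}\}=2^{qs}c_{k,q,s}$ and therefore $\overline{c}_{k,q,s}^{1/(qs)}=2\cdot c_{k,q,s}^{1/(qs)}$. Comparing with the form of $\hat{c}_{p,m}$ in~\eqref{hatcpm}, the claim reduces to proving $c_{k,q,s}\leq e^{qs\,c_{p,m}^{(1)}}$. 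Using the elementary bound $1+x\leq e^x$, this in turn follows from the linear estimate $\sum_{i=1}^{k}c_{q,s,i}\leq qs\cdot c_{p,m}^{(1)}$.

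Next I would note that the integer $k$ from~\eqref{deffk} satisfies the uniform bound $k\leq K:=[p/(p-1)]+1$ in every branch of its definition: for $k\in\{0,1\}$ this is immediate (since $K\geq 1$), while for the middle branch $k=[sp/(p-1)]+1$ it follows from $s<1$. Thus the sum above has at most $K$ nontrivial terms.

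Finally I would extract a factor $qs$ from each term $c_{q,s,i}$ with $i\geq 1$: by~\eqref{contevbdkop},
\[
\frac{c_{q,s,i}}{qs}=\frac{1}{i!}\prod_{j=1}^{i-1}(qs+j).
\]
Using the crude bounds $qs+j\leq q+j$ (from $s<1$), $1/i!\leq 1$, and padding the product to $K-1$ factors (each $\geq 1$) gives $c_{q,s,i}/(qs)\leq\prod_{j=1}^{K-1}(q+j)$. Summing at most $K$ such terms and then absorbing $K$ into $K+1$ together with one additional harmless product factor $q+K\geq 1$ produces exactly $c_{p,m}^{(1)}$ as defined in~\eqref{c1pm}, closing the chain.

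The only real obstacle is cosmetic: the constant $c_{p,m}^{(1)}$ has been tuned to simultaneously absorb the slack $qs\to q$ forced by $s\in(0,1)$ and the slack $k\to K+1$ in the range of summation; once this structural point is recognised, every step is a one-line inequality carrying no analytic content.
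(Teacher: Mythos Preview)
Your proof is correct and follows essentially the same route as the paper: bound $c_{k,q,s}$ linearly in $qs$ by the fixed constant $c_{p,m}^{(1)}$, then use $(1+x)^{1/x}\le e$. Your version is in fact slightly cleaner, since by splitting off the $i=0$ term (and noting $c_{q,s,0}=1$) you obtain the exact identity $\overline{c}_{k,q,s}=2^{qs}c_{k,q,s}$ and only need $\sum_{i\ge1}c_{q,s,i}\le qs\,c_{p,m}^{(1)}$; this sidesteps the paper's intermediate bound $c_{k,q,s}\le(k+1)\prod_{j=0}^{k}(qs+j)$, which at $k=0$ reads $1\le qs$ and is not literally true for small $s$, even though the final conclusion still holds there.
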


\begin{proof}
To show this, from~\eqref{LASTORIADELSIORINTENTO}
(and recalling also~\eqref{contevbdkop} and~\eqref{8132efdkunh}), we evince that 
\begin{equation}\label{cola-cola}
\overline{c}_{k,q,s} =\max\left\lbrace 2^{qs},\tilde{c}_{k,q,s}\right\rbrace =2^{qs}\max\left\lbrace 1, c_{k,q,s} \right\rbrace\leq 2^{qs}\left(1+c_{k,q,s}\right).
\end{equation}

Moreover, we notice that, by~\eqref{deffk},
\begin{equation*}
k\leq \left[\frac{sp}{p-1}\right]+1.
\end{equation*}   
Therefore, recalling~\eqref{contevbdkop}, we see that
\begin{equation}\label{soimate}
\begin{split}
c_{k,q,s} &\leq (k+1) \prod_{j=0}^k(qs+j)\\
&\leq qs \left(\left[\frac{p}{p-1}\right]+2\right)\prod_{j=1}^{\left[\frac{p}{p-1}\right]+1}\left(\frac{p}{m-1}+j\right)\\
&=qs c_{p,m}^{(1)}.
\end{split}
\end{equation}
{F}rom this and~\eqref{cola-cola} we obtain that, for every~$s\in (0,1)$,
\begin{equation}\label{kitebcgr}
\overline{c}_{k,q,s}^{\frac{1}{qs}}\leq
2 \left(1+c_{k,q,s}\right)^{\frac{1}{qs}}\leq
2\left(1+qs c_{p,m}^{(1)}\right)^{\frac{1}{qs}}.
\end{equation}

Now, we define the function~$z:(0,1)\to (0,+\infty)$ as
\begin{equation*}
z(s):=\left(1+qsc_{p,m}^{(1)}\right)^{\frac{1}{qs}}
\end{equation*}
and we notice that~$z$ is decreasing in~$(0,1)$.
Therefore, from~\eqref{kitebcgr}, we find that
\begin{eqnarray*}
\overline{c}_{k,q,s}^{\frac{1}{qs}}\leq
2z(s)\le 2 \limsup_{s\to0^+}z(s)=  2e^{c_{p,m}^{(1)}},
\end{eqnarray*}
which, together with~\eqref{hatcpm}, gives the desired result.
\end{proof}

The barrier~$w$ in Theorem~\ref{lem:1} will be built by suitably
rescaling and translating the function~$v$ in~\eqref{vdefinition}.
Thus, as a preliminary step in this direction,
we now estimate the~$p$-fractional Laplacian
of~$v$, according to the following statement:

\begin{prop}\label{lemma:BigClaim}
Let~$s\in (0,1)$, $p\in (1,+\infty)$, $m\in \left[p,+\infty\right)$, $q:=\frac{p}{m-1}$ and~$r\in \left[1,+\infty\right)$. Let~$v:\R^n\to [0,1]$ be as in~\eqref{vdefinition}.

Then, there exists~$\widetilde{C}\in (0,+\infty)$ depending only on~$n$, $m$ and~$p$ such that, for every~$x\in B_r$, 
\begin{equation}\label{BigClaim}
-(-\Delta)_p^s v(x)  \leq \frac{\widetilde{C}}{s(1-s)}\left(v(x)+\bar{c}_{k,q,s} r^{-qs}\right)^{m-1}, 
\end{equation} 
where~$\bar{c}_{k,q,s}$ is given in~\eqref{LASTORIADELSIORINTENTO}.
\end{prop}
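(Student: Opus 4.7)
The plan is to estimate the nonlinear singular integral
\[
-(-\Delta)_p^s v(x)=\int_{\R^n}\frac{(v(y)-v(x))\,|v(y)-v(x)|^{p-2}}{|x-y|^{n+sp}}\,dy
\]
by exploiting the precise structure of $v=h(|\cdot|)$ built in~\eqref{vdefinition} together with the Taylor cancellations encoded in $f$. Using rotational symmetry one can assume $x=|x|e_1$; the point is that the polynomial correction in~\eqref{fuhweoiu34ii43ty43ugijkebjwk7684903} is designed exactly so that $f$ and its first $k$ derivatives vanish at $r/2$, so $v\in C^{k+1}$ across the sphere of radius $r/2$, and the only remaining singular behaviour of $v$ is the blow-up $h(t)\sim (r-t)^{-qs}$ as $t\to r^-$.

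The argument splits naturally into two regimes. \textbf{Regime A} ($|x|\le 3r/4$): here $r-|x|$ is comparable to $r$, so $v(x)\lesssim r^{-qs}$. I would split the domain as $\{|y|<r/2\}\cup\{r/2\le|y|\le r\}\cup\{|y|>r\}$; using $0\le v\le 1$ and the dyadic decomposition of the annulus $\{r/2<|y|<r\}$ together with the pointwise bound $h(t)\lesssim (r-t)^{-qs}$, one obtains
\[
-(-\Delta)_p^s v(x)\le \frac{C}{s(1-s)}\,r^{-ps},
\]
the $\frac{1}{s(1-s)}$ coming as usual from comparing the tail integral $\int_{r}^{+\infty} \rho^{-1-sp}\,d\rho$ with the near-field integral $\int_0^{r}\rho^{n-1-sp}\,d\rho$. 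Then Proposition~\ref{MLNINT} converts this into the claimed bound in $(v(x)+\bar c_{k,q,s}r^{-qs})^{m-1}$.

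\textbf{Regime B} ($3r/4<|x|<r$): here one sets $\rho:=(r-|x|)/2$ and decomposes the integral as near-field $\{|y-x|\le\rho\}$, intermediate $\{\rho<|y-x|\le r/4\}$, and far-field $\{|y-x|>r/4\}$. On the near-field I would Taylor expand $h$ around $|x|$; since $v$ is $C^{k+1}$ where the expansion lives, $|v(y)-v(x)|\lesssim \sum_{i=1}^{k+1}\|h^{(i)}\|_{L^\infty(B_\rho(|x|))}|y-x|^i$, and raising to the power $p-1$ against $|x-y|^{-n-sp}$ one exploits the odd symmetry of the first-order term for $i=1$ and the convergence of the remaining integrals (this is exactly why $k$ must satisfy $(p-1)(k+1)>sp$ as in~\eqref{deffk}). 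The intermediate and far-field contributions are handled using the pointwise bound $v\le 1$ and the monotonicity of $h$. Each piece is controlled by $\frac{C}{s(1-s)}(r-|x|)^{-ps}$, and a final application of Proposition~\ref{MLNINT} yields~\eqref{BigClaim}.

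The main obstacle is the near-field analysis in Regime~B when $p\in(1,2)$, since the nonlinearity $t\mapsto|t|^{p-2}t$ is only Hölder continuous and the naive symmetric cancellation available for $p=2$ is not enough. This is precisely what forces $k$ to grow like $[sp/(p-1)]+1$ in that range; the technical verification that the resulting integrals converge with the claimed $\frac{1}{s(1-s)}$ factor is the content of the auxiliary estimates collected in Appendix~\ref{appebi}. Tracking the explicit $s$-dependence of every constant throughout the dyadic and Taylor estimates—so that the bound remains stable as $s\to 0^+$ and $s\to 1^-$, as needed for the forthcoming local limit in~\cite{DFVERPP}—is the other delicate bookkeeping task.
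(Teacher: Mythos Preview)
Your overall strategy---reduce to a bound of the form $-(-\Delta)_p^s v(x)\lesssim \frac{1}{s(1-s)}\min\{1,(r-|x|)^{-ps}\}$ and then invoke Proposition~\ref{MLNINT}---is exactly what the paper does, so the final step is right. The organization, however, is different. Rather than splitting by the location of $x$ into your Regimes~A and~B, the paper proves \emph{both} the local-rate estimate~\eqref{l.recao5} and a uniform $O(1)$ estimate~\eqref{qw.bgte7756} for \emph{every} $x\in B_r$, by applying the same appendix lemmas twice: once with the scale $\rho=(r-|x|)/2$ and once with $\rho=1$ (or $\mu=1$); the minimum then follows without a separate ``Regime A'' analysis. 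More importantly, the paper's primary case distinction is by the $(p,s)$-regime rather than by $|x|$: for $p\ge2$ it uses an affine-subtraction argument (Lemma~\ref{p2s01}), exploiting that the principal value of $\phi_p(\Sigma(x)\cdot(y-x))/|y-x|^{n+sp}$ vanishes by oddness; for $p\in(1,2)$ with $s<\tfrac{p-1}{2p}$ the condition $sp<p-1$ makes the near-field integral converge from a bare Lipschitz bound (Lemma~\ref{crocodile}), so no cancellation is needed at all; and for $p\in(1,2)$ with $s\ge\tfrac{p-1}{2p}$ the paper carries out a long direct computation based on reflective symmetrization (Lemma~\ref{ebgfncy654444}).

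Your sentence ``one exploits the odd symmetry of the first-order term for $i=1$'' is therefore too optimistic as a uniform description of the near-field step: when $p\ge2$ the odd-symmetry cancellation is applied to $\phi_p$ of the affine part, not to the linear Taylor term of $v$ itself (the difference matters because $\phi_p$ is nonlinear), and when $p<2$ the argument is replaced by the two quite different mechanisms just described. You correctly flag this as the main obstacle, but the resolution is not a single Taylor-plus-symmetry trick; it genuinely bifurcates according to~\eqref{deffk}.
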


\begin{proof}
In order to prove~\eqref{BigClaim}, we show that there exists~$\bar{C}\in(0,+\infty)$, depending only on~$n$, $m$ and~$p$, such that, for every~$x\in B_r$,
\begin{equation}\label{l.recao5}
-(-\Delta)_{p}^sv(x) \leq \frac{\bar{C}}{s(1-s)}\left( \left(r-\left|x\right|\right)^{-qs(p-1)-ps}+\left(r-\left|x\right|\right)^{-ps}\right),
\end{equation} 
and also 
\begin{equation}\label{qw.bgte7756}
-(-\Delta)_p^s v(x)\leq \frac{\bar{C}}{s(1-s)}.
\end{equation}
If equations~\eqref{l.recao5} and~\eqref{qw.bgte7756} hold true, then using them together with the estimate in~\eqref{honda} we obtain that, for every~$x\in B_r$,
\begin{equation*}
\begin{split}
-(-\Delta)_p^s v(x)&\leq \frac{\bar{C}}{s(1-s)}\min\left\lbrace 1,  \left(r-\left|x\right|\right)^{-qs(p-1)-ps}+\left(r-\left|x\right|\right)^{-ps} \right\rbrace\\ 
&\leq \frac{2\bar{C}}{s(1-s)}\min\left\lbrace 1,\left(r-|x|\right)^{-ps} \right\rbrace\\
&\leq \frac{2\bar{C}}{s(1-s)}\left(h\left(\left|x\right|\right)+\bar{c}_{k,q,s}r^{-qs}\right)^{m-1}\\
&=\frac{2\bar{C}}{s(1-s)}\left(v(x)+\bar{c}_{k,q,s}r^{-qs}\right)^{m-1},
\end{split}
\end{equation*}
and therefore~\eqref{BigClaim} holds true with~$\widetilde{C}:=2\bar{C}$.

Hence, from now on, we focus on the proof of~\eqref{l.recao5} and~\eqref{qw.bgte7756}.
To accomplish this,
in what follows we denote by~$C\in(0,+\infty)$ a constant depending only on~$p$, $m$ and~$n$ and possibly changing from line to line.

If~$p\in (1,2)$ and~$s\in \left[\frac{p-1}{2p},1\right)$, 
we write that
$$ -(-\Delta)_{p}^s v(x)
\le \int_{B_\rho(x)}\frac{\left(v(y)-v(x)\right)\left|v(y)-v(x)\right|^{p-2}}{\left|x-y\right|^{n+sp}}\,dy
+ 2^{p-1}\|v\|_{L^\infty(\R^n)}\int_{\R^n\setminus B_\rho(x)}\frac{dy}{\left|x-y\right|^{n+sp}}.$$
As a result,
applying Lemma~\ref{ebgfncy654444} with~$\rho:=\frac{r-\left|x\right|}{2}$
we find that 
\begin{equation*}
\begin{split}
-(-\Delta)_{p}^s v(x)
&\le  \frac{ C_{n,p,m}}{1-s}\rho^{-qs(p-1)-ps}+\frac{c_{n,p}}{s} \rho^{-ps}\\
&\leq \frac{C}{s(1-s)}\left( (r-\left|x\right|)^{-qs(p-1)-ps}+(r-\left|x\right|)^{-ps} \right),
\end{split}
\end{equation*}
which establishes~\eqref{l.recao5}, while making use
of Corollary~\ref{bfdclaop74r-09} with~$\rho=\mu:=1$ we obtain that
\begin{equation*}
-(-\Delta)_p^s v(x)\leq \frac{C_{1,p,m,n}}{p-sp}+\frac{c_{n,p}}{s}\leq \frac{C}{s(1-s)},
\end{equation*}
which proves~\eqref{qw.bgte7756} in the case~$p\in (1,2)$ and~$s\in \left[\frac{p-1}{2p},1\right)$. 

Now we let~$p\in (1,2)$ and~$s\in \left(0,\frac{p-1}{2p}\right)$. In this setting,
recalling~\eqref{deffk} and~\eqref{vdefinition}, we have that~$k=0$ and
\begin{equation*}
v(y) :=\begin{dcases}
0\quad &\mbox{if}\quad y\in \overline{B_{\frac{r}{2}}},\\
(r-|y|)^{-qs} -\left(\frac{r}{2}\right)^{-qs} \quad &\mbox{if}\quad y\in B_{\hat{r}}\setminus \overline{B_{\frac{r}{2}}},\\
1&\mbox{if}\quad y\in \R^n\setminus \overline{B}_{\hat{r}},
\end{dcases}\end{equation*}
where~$\hat{r}\in\left(\frac{r}{2},r\right)$ is defined as
\begin{equation}\label{g874rfegt56}
\hat{r}:=\min\left\lbrace t\in \left(\frac{r}{2},r\right)\mbox{  s.t.  } f(t) \geq 1 \right\rbrace.
\end{equation}
Accordingly,
\begin{equation}\label{gradv}
\nabla v(y)=\begin{dcases}
0 \quad &\mbox{if}\quad y\in B_{\frac{r}{2}},\\
qs(r-\left|y\right|)^{-qs-1}\frac{y}{\left|y\right|} \quad &\mbox{if}\quad y\in B_{\hat{r}}\setminus \overline{B_{\frac{r}{2}}},\\
0 \quad &\mbox{if}\quad y\in \R^n\setminus \overline{B}_{\hat{r}},
\end{dcases}
\end{equation}
As a consequence, for a.e.~$y\in \R^n$,
\begin{equation}\label{trenoi965tgv4}
\left|\nabla v(y)\right|\leq qs\left|r-\left|y \right|\right|^{-qs-1}\chi_{B_{\hat{r}}\setminus \overline{B_{\frac{r}{2}}}}
.
\end{equation}

Also, if~$y\in  B_{\frac{r-|x|}{2}}(x)$ then
\begin{equation*}
\left|y\right|\leq \left|y-x\right|+\left|x\right|\leq \frac{r-|x|}{2}+\left|x\right|\leq \frac{r+|x|}{2}< r,
\end{equation*}
and thus 
\begin{equation}\label{koprec5410}
\left|r-\left|y\right|\right|= r-\left|y\right|\geq \frac{r-\left|x\right|}{2}.
\end{equation}
Hence, using~\eqref{trenoi965tgv4} and~\eqref{koprec5410} we obtain that, if~$y\in B_{\frac{r-\left|x\right|}{2}}(x)$,
\begin{equation*}
\left|\nabla v(y)\right|\leq qs 2^{qs+1} \left(r-\left|x\right|\right)^{-qs-1},
\end{equation*} and therefore
\begin{equation}\label{C2est}
\begin{split}
\left\|\nabla v\right\|_{L^\infty\left(B_{\frac{r-|x|}{2}}(x)\right)}<q 2^{q+1}(r-|x|)^{-qs-1}.
\end{split}
\end{equation}

We thus apply Lemma~\ref{crocodile} with 
\begin{equation*}
\widehat{K}:=\left\| \nabla v \right\|_{L^\infty\left(B_{\frac{r-\left|x\right|}{2}}\right)}\quad\mbox{and}\quad \rho:=\frac{r-\left|x\right|}{2}
\end{equation*}
and, thanks to~\eqref{C2est}, we find that
\begin{equation*}
\begin{split}
\left|(-\Delta)_{p}^sv(x)\right|&\leq  \frac{c_{n,p}}{s(p-1-ps)}\left(\left\| \nabla v \right\|_{L^\infty\left(B_{\frac{r-\left|x\right|}{2}}\right)}^{p-1} \left(\frac{r-\left|x\right|}{2}\right)^{-ps+p-1}+\left(\frac{r-\left|x\right|}{2}\right)^{-ps}\right)\\
& \leq \frac{C}{s(1-s)}\left( \left(r-\left|x\right|\right)^{-qs(p-1)-ps}+\left(r-\left|x\right|\right)^{-ps}\right).
\end{split}
\end{equation*}
This concludes the proof of~\eqref{l.recao5} for the case~$p\in (1,2)$ and~$s\in \left(0,\frac{p-1}{2p}\right)$.

Furthermore, from~\eqref{trenoi965tgv4} and Proposition~\ref{telviv}
we evince that, for every~$y\in \R^n$, 
\begin{equation*}
\begin{split}
\left|\nabla v(y) \right|\leq qs(r-\hat{r})^{-qs-1}\leq q \widehat{C}_{p,m}^{-qs-1}\leq C,
\end{split}
\end{equation*}
and so~$
\left\|\nabla v\right\|_{L^\infty(\R^n)}\leq C$.
This allows us to use  Lemma~\ref{crocodile} with~$\widehat{K}:=\left\|\nabla  v\right\|_{L^\infty(\R^n)}$ and~$\rho:=1$,
obtaining that, for every~$x\in B_r$, 
\begin{equation*}
-(-\Delta)_p^s v(x)\leq \frac{C}{s(1-s)}. 
\end{equation*}
This concludes the proof of~\eqref{qw.bgte7756} for the case~$p\in (1,2)$ and~$s\in \left(0,\frac{p-1}{2p}\right)$. 

Now, we let~$p\in [2,+\infty)$ and~$s\in \left(0,1\right)$. In this case, recalling~\eqref{deffk}, we have that~$k=1$
and
\begin{equation*}
v(y) :=\begin{dcases}
0\quad &\mbox{if}\quad y\in \overline{B_{\frac{r}{2}}},\\
(r-|y|)^{-qs} -\left(\frac{r}{2}\right)^{-qs} 
-qs \left(\frac{r}{2}\right)^{-qs-1} \left(|y|-\frac{r}{2}\right)
\quad &\mbox{if}\quad y\in B_{\hat{r}}\setminus \overline{B_{\frac{r}{2}}},\\
1&\mbox{if}\quad y\in \R^n\setminus \overline{B}_{\hat{r}},
\end{dcases}\end{equation*}
where~$\hat{r}$ has been defined in~\eqref{g874rfegt56}.
We can thereby compute that
\begin{equation}\label{gvcfkio}
\nabla v(y)=\begin{dcases}
0\quad &\mbox{if}\quad y\in\overline{ B_{\frac{r}{2}}},\\
\frac{y}{\left|y\right|}\left(  qs(r-\left|y\right|)^{-qs-1} -qs \left(\frac{r}{2}\right)^{-qs-1}\right)\quad &\mbox{if}\quad y\in B_{\hat{r}}\setminus \overline{B_{\frac{r}{2}}},\\
0\quad &\mbox{if}\quad y\in \R^n\setminus B_{\hat{r}}.
\end{dcases}
\end{equation}
Consequently, for every~$y\in B_r$, 
\begin{equation}\label{7t54983r2pihfbkjwbiuf4}
\left|\nabla v(y)\right|\leq qs\left(
(r-\left|y\right|)^{-qs-1} - \left(\frac{r}{2}\right)^{-qs-1}\right)
\chi_{B_{\hat{r}}\setminus \overline{B_{\frac{r}{2}}}}\leq C\left(r-\left|y\right|\right)^{-qs-1}\chi_{B_{\hat{r}}\setminus \overline{B_{\frac{r}{2}}}}.
\end{equation}
{F}rom this and~\eqref{koprec5410} we infer that
\begin{equation}\label{DFVre6712}
\left\|\nabla v\right\|_{L^\infty\left(B_{\frac{r-\left|x\right|}{2}}(x)\right)}\leq C \left(r-\left|x\right|\right)^{-qs-1}.
\end{equation}
Also, \eqref{7t54983r2pihfbkjwbiuf4} and Proposition~\ref{telviv} give that
\begin{equation}\label{iolsacr}
\left\|\nabla v\right\|_{L^\infty(\R^n)}\leq
C \left(r-\hat{r}\right)^{-qs-1}
\leq C. 
\end{equation}

Moreover, for every~$y\in B_{\hat{r}}\setminus B_{\frac{r}{2}}$ and~$\mu$, $j\in \{ 1,\dots, n \}$,
\begin{equation*}
\begin{split}
D_{j,\mu}^2 v(y)=qs\left(\frac{\delta_{\mu,j}}{\left|y\right|}-\frac{y_\mu y_j}{\left|y\right|^3}\right)\left((r-\left|y\right|)^{-qs-1}-\left(\frac{r}{2}\right)^{-qs-1} \right)+ \frac{y_{\mu}y_j}{\left|y\right|^2}qs(qs+1)(r-\left|y\right|)^{-qs-2},
\end{split}
\end{equation*}
and thus, for every~$y\in B_{\hat{r}}\setminus \overline{B}_{\frac{r}{2}}$,
\begin{equation}\label{dv2tagss}
\left|D^2 v(y)\right|\leq C\left(\frac{ \left(r-\left|y\right|\right)^{-qs-1}}{\left|y\right|}+(r-\left|y\right|)^{-qs-2}\right) \leq C\left(r-\left|y\right|\right)^{-qs-2}. 
\end{equation}
Accordingly,
making use of this and~\eqref{koprec5410},
\begin{equation}\label{DRFE2389}
\left\| D^2v  \right\|_{L^\infty\left(B_{\frac{r-\left|x\right|}{2}}(x)\cap B_{\hat{r}}\right)}\leq C\left(r-\left|x\right|\right)^{-qs-2}.
\end{equation} 
In addition, 
we deduce from~\eqref{dv2tagss} and Proposition~\ref{telviv} that 
\begin{equation}\label{olepper43cv6}
\left\|D^2 v\right\|_{L^\infty(B_{\hat{r}})}\leq  C\left(r-\hat{r}\right)^{-qs-2}\leq C.
\end{equation}

Now, we set 
\begin{equation*}
\Sigma(y):=\begin{dcases}
\nabla v(y)\quad &\mbox{if}\quad y\in B_{\hat{r}},\\
0\quad &\mbox{if}\quad y\in \R^n\setminus B_{\hat{r}}
\end{dcases}
\end{equation*}
and we claim that there exists~$C\in(0,+\infty)$ such that, for every~$y\in B_{\frac{r-\left|x\right|}{2}}(x)$,
\begin{equation}\label{pqasnyrve56}
v(y)-v(x)-\Sigma(x)\cdot(y-x)\leq C\left(r-\left|x\right|\right)^{-qs-2}\left|x-y\right|^2
\end{equation} 
and, for every~$y\in \R^n$,
\begin{equation}\label{illliadfre50123}
v(y)-v(x)-\Sigma(x)\cdot(y-x)\leq C\left|x-y\right|^2.
\end{equation}
To check~\eqref{pqasnyrve56} and~\eqref{illliadfre50123} we proceed as follows.
If~$x\in B_r \setminus B_{\hat{r}}$, the left-hand sides of both~\eqref{pqasnyrve56} and~\eqref{illliadfre50123} are non positive, so the inequalities trivially hold. If instead~$x$, $y\in B_{\hat{r}}$ the inequality in~\eqref{illliadfre50123} follows from~\eqref{olepper43cv6}. Analogously, if~$x\in B_{\hat{r}}$ and~$y\in B_{\hat{r}}\cap B_{\frac{r-\left|x\right|}{2}}(x)$ the inequality in~\eqref{pqasnyrve56} follows from~\eqref{DRFE2389}. 

Now we consider a radial smooth extension~$\bar{v}$ of~$v$ outside~$B_{\hat{r}}$ such that~$1\leq \bar{v} \leq 2$ and~$D^2\bar{v}(y)\leq C$ for every~$y\in \R^n\setminus B_{\hat{r}} $.
Then, from~\eqref{olepper43cv6}, we have that if~$x\in B_{\hat{r}}$, for every~$y\in \R^n \setminus B_{\hat{r}}$, 
\begin{equation*}
\begin{split}
v(y)-v(x)-\Sigma(x)\cdot(y-x)&=1-\bar{v}(x)-\nabla  \bar{v}(x)\cdot(y-x)\\
&\leq \bar{v}(y)-\bar{v}(x)-\nabla \bar{v}(x)\cdot(y-x)\\
&\leq C\left|x-y\right|^2.
\end{split}
\end{equation*}
This concludes the proof of~\eqref{illliadfre50123}. 

Similarly, if we define the extension
\begin{equation*}
\hat{v}(y):=\begin{dcases}
v(y)\quad &\mbox{if}\quad y\in B_{\hat{r}},\\
f(\left|y\right|)\quad &\mbox{if}\quad y\in B_{r}\setminus  B_{\hat{r}},\\
1\quad &\mbox{if}\quad y\in \R^n\setminus B_{r},
\end{dcases}
\end{equation*} 
we notice that, by~\eqref{DRFE2389}
and the definition of~$f$ in~\eqref{fuhweoiu34ii43ty43ugijkebjwk7684903},
if~$x\in B_{\hat{r}}$, 
\begin{equation*}
\left\| D^2 \hat{v}\right\|_{L^\infty\left(B_{\frac{r-\left|x\right|}{2}}(x)\right)}\leq C\left(r-\left|x\right|\right)^{-qs-2}. 
\end{equation*}
Then, for every~$y\in B_{\frac{r-\left|x\right|}{2}}(x)\setminus B_{\hat{r}}$,
\begin{equation*}
\begin{split}
v(y)-v(x)-\Sigma(x)\cdot(y-x)&=1-\hat{v}(x)-\nabla \hat{v}(x)\cdot(y-x)\\
&\leq \hat{v}(y)-\hat{v}(x)-\nabla \hat{v}(x)\cdot(y-x)\\
&\leq  C\left(r-\left|x\right|\right)^{-qs-2}\left|x-y\right|^2.
\end{split}
\end{equation*}
This concludes the proof of~\eqref{pqasnyrve56}.

Now, we apply Lemma~\ref{p2s01} with
\begin{equation*}
\rho:=\frac{r-\left|x\right|}{2},\quad \widehat{K}:=\left\|\nabla v\right\|_{L^\infty \left(B_{\frac{r-\left|x\right|}{2}}(x)\right)}\quad\mbox{and}\quad \widetilde{C}:=C\left(r-\left|x\right|\right)^{-qs-2}
\end{equation*}
and obtain, thanks to~\eqref{DFVre6712} and~\eqref{pqasnyrve56}, that, if~$x\in B_r$,
\begin{equation*}
\begin{split}&
-(-\Delta)_p^s v(x) \\&\leq \frac{c_{n,p}}{s(1-s)}\left( \widetilde{C} \max\left\lbrace \left\|\nabla v\right\|_{L^\infty \left(B_{\frac{r-\left|x\right|}{2}}(x)\right)},\left|\Sigma(x)\right| \right\rbrace^{p-2}
\left(r-\left|x\right|\right)^{-sp+p-qs-2}                               
+\left(r-\left|x\right|\right)^{-sp} \right)\\
&\leq  \frac{c_{n,p}}{1-s}\left( \widetilde{C}  \left\|\nabla v\right\|_{L^\infty \left(B_{\frac{r-\left|x\right|}{2}}(x)\right)}^{p-2}\left(r-\left|x\right|\right)^{-sp+p-qs-2} +\left(r-\left|x\right|\right)^{-sp} \right)\\
&\leq \frac{C}{1-s}\left( \left(r-\left|x\right|\right)^{-(qs+1)(p-2)-sp+p-qs-2}                                +(r-\left|x\right|)^{-sp}\right)\\
&=\frac{C}{1-s}\left(\left(r-\left|x\right|\right)^{-qs(p-1)-sp}+(r-\left|x\right|)^{-sp}\right).
\end{split}
\end{equation*}
This concludes the proof of~\eqref{l.recao5} when~$p\in [2,+\infty)$ and~$s\in \left[\frac{p-1}{p},1\right)$. 

Finally, applying Lemma~\ref{p2s01} with 
\begin{equation*}
\rho:=1\quad {\mbox{ and }}\quad \widehat{K}:=\left\|\nabla v\right\|_{L^\infty(\R^n)},
\end{equation*}
and recalling also equations~\eqref{iolsacr} and~\eqref{illliadfre50123}, we obtain that, if~$x\in B_r$,
\begin{equation*}
-(-\Delta)_p^s v(x)\leq \frac{C}{1-s}.
\end{equation*}
This concludes the proof of~\eqref{qw.bgte7756} when~$p\in [2,+\infty)$ and~$s\in\left(0,1\right)$. 
\end{proof}

With the work done so far, we can now complete the construction of the barrier
in Theorem~\ref{lem:1}.

\begin{proof}[Proof of Theorem~\ref{lem:1}]
Let~$\bar{c}_{k,q,s}$ and~$\widetilde{C}$ be as in
Propositions~\ref{MLNINT} and~\ref{lemma:BigClaim} respectively.
We define the constants  
\begin{equation}\label{cntefg543888}
C_0:=\left(\frac{\widetilde{C}}{s(1-s)\tau}\right)^{\frac{1}{ps}}\quad\mbox{and}\quad \beta:=2\bar{c}_{k,q,s} r^{-qs}.
\end{equation}
Also, we rescale and translate~$v$ as follows 
\begin{equation}\label{defofw}
w(x):=(2-\beta)v\left(\frac{x}{C_0}\right)+\beta-1.
\end{equation}
Moreover, we set 
\begin{equation}\label{gldini}
r:=\frac{R}{C_0} 
\end{equation}
and we define the constants 
\begin{equation}
\widehat{C}_{n,p,m}:=\hat{c}_{p,m}^p \widetilde{C}\quad\mbox{and}\quad\bar{R}_{s,\tau,n,p,m}:=\left(\frac{\widehat{C}_{n,p,m}}{s(1-s)\tau}\right)^\frac{1}{ps},
\end{equation}
where~$\hat{c}_{p,m}\in(1,+\infty)$ is provided in~\eqref{hatcpm}. 

We prove that~$w$, as defined in~\eqref{defofw},
satisfies the properties listed in the statement of Theorem~\ref{lem:1}.

We first observe that, for every~$R\in  \left[\bar{R}_{s,\tau,n,p,m},+\infty\right)$,
if~$x\in  \R^n\setminus B_R$ then
$$ \frac{|x|}{C_0}\ge \frac{R}{C_0}=r
$$ and so, recalling the definition of~$v$ in~\eqref{vdefinition}, we have that
$$ w(x)=(2-\beta)v\left(\frac{x}{C_0}\right)+\beta-1=
2-\beta+\beta-1 =1.$$
This establishes~\eqref{primacosauwwgyg098765}.

We now check that~\eqref{grw} holds true for every~$R\in  \left[\bar{R}_{s,\tau,n,p,m},+\infty\right)$.
For this, we make use of Lemma~\ref{hole:lemma} (recall also that~$\hat{c}_{p,m}>1$ thanks to~\eqref{hatcpm})
to deduce that
\begin{equation*}
R \geq \bar{R}_{s,\tau,n,p,m}=C_0\hat{c}_{p,m}^\frac{1}{s}\geq C_0\hat{c}_{p,m} \geq C_0\bar{c}_{k,q,s}^{\frac{1}{qs}}. 
\end{equation*}
In light of this, \eqref{cntefg543888} and~\eqref{gldini} we obtain that
$$ \beta = 2\bar{c}_{k,q,s}\left(\frac{C_0}{R}\right)^{qs}\le 2.$$
{F}rom this and~\eqref{defofw} we infer that  
\begin{equation}\label{solfrop}
-1< w \leq 1.
\end{equation}

Moreover, 
recalling the definitions in~\eqref{fuhweoiu34ii43ty43ugijkebjwk7684903}
and~\eqref{vdefinition},
for every~$t\in \left(\frac{r}{2},r\right)$, we have that
\begin{equation*}
\begin{split}
h(t) &\leq f(t)\\ 
&=g(r-t)+\sum_{i=0}^{k} \frac{(-1)^{i+1}}{i!}g^{(i)}\left(\frac{r}{2}\right)\left(t-\frac{r}{2}\right)^i\\
&\leq (r-t)^{-qs}+\sum_{i=0}^k \frac{\left|g^{(i)}(r/2)\right|}{i!}\left(t-\frac{r}{2}\right)^i\\
&\leq (r-t)^{-qs}+\sum_{i=0}^{k} c_{q,s,i}\left(\frac{r}{2}\right)^{-qs-i} \left(\frac{r}{2}\right)^i\\
&= (r-t)^{-qs}+ \tilde{c}_{k,q,s}r^{-qs}, 
\end{split}
\end{equation*}
where~$c_{q,s,i}$ and~$\tilde{c}_{k,q,s}$ have been defined respectively in~\eqref{contevbdkop} and~\eqref{8132efdkunh}. 

{F}rom this we deduce that, for every~$x\in B_r$,
\begin{equation*}
v(x)\leq (r-|x|)^{-qs}+ \tilde{c}_{k,q,s}r^{-qs}.
\end{equation*}
For this reason, for any~$x\in B_R$,
\begin{equation}\label{hulio-09876}\begin{split}&
1+w(x)\leq 2v\left(\frac{x}{C_0}\right)+\beta \leq 2C_0^{qs}(R-\left|x\right|)^{-qs}+ 2\tilde{c}_{k,q,s} r^{-qs}+\beta\\&\qquad
= 2C_0^{qs}(R-\left|x\right|)^{-qs}+ 2\tilde{c}_{k,q,s} r^{-qs}+
2\bar{c}_{k,q,s} r^{-qs}.\end{split}
\end{equation}
Since
\begin{equation*}
r=\frac{R}{C_0}\geq \frac{R-\left|x\right|}{C_0},
\end{equation*}
we obtain from~\eqref{hulio-09876} that, for all~$x\in B_R$,
\begin{equation}\label{dlop12}
\begin{split}
1+w(x)
&\leq 2C_0^{qs}(R-\left|x\right|)^{-qs}+2\tilde{c}_{k,q,s} C_0^{qs} (R-\left|x\right|)^{-qs} +2\bar{c}_{k,q,s} C_0^{qs}(R-\left|x\right|)^{-qs}\\
&\leq C_1(R-\left|x\right|)^{-qs},
\end{split}
\end{equation}
where~$C_1:=2C_0^{qs}\max\left\lbrace  1, \,\tilde{c}_{k,q,s} ,\,\bar{c}_{k,q,s}\right\rbrace =2 C_0^{qs}\bar{c}_{k,q,s}$, 
in light of~\eqref{LASTORIADELSIORINTENTO}.

Now, if~$x\in B_{R-1}$ then~$R-|x|\geq \frac{1+R-|x|}{2}$.
Thus, by~\eqref{dlop12},
\begin{equation}\label{EWAF-lol}
1+w(x)\leq 2^{qs+1}C_{0}^{qs}\bar{c}_{k,q,s} (1+R-\left|x\right|)^{-qs}.
\end{equation}
Additionally, if~$x\in B_R\setminus B_{R-1}$, then~$R+1-|x|\leq 2$.
Thus, by~\eqref{solfrop}, 
\begin{equation}\label{glen-gtrbv-8766}
1+w(x)\leq 2=2^{qs+1}2^{-qs}\leq 2^{qs+1}(1+R-\left|x\right|)^{-qs}.
\end{equation}
{F}rom~\eqref{EWAF-lol} and~\eqref{glen-gtrbv-8766}, and recalling Lemma~\ref{hole:lemma}, we see that, for every~$x\in B_R$,
\begin{equation*}
\begin{split}
1+w(x)&\leq 2^{qs+1}\max\left\lbrace 1, C_{0}^{qs}\bar{c}_{k,q,s} \right\rbrace \left(1+R-\left|x\right|\right)^{-qs}\\
&\leq 2^{q+1}\max\left\lbrace 1, C_0^{qs}\hat{c}_{p,m}^{qs}\right\rbrace \left(1+R-\left|x\right|\right)^{-qs}\\
&\leq 2^{q+1}\max\left\lbrace 1, C_0^{qs}\hat{c}_{p,m}^{q}\right\rbrace \left(1+R-\left|x\right|\right)^{-qs}\\
&=2^{q+1}\max\left\lbrace 1, \bar{R}_{s,\tau,n,p,m}^{qs}\right\rbrace \left(1+R-\left|x\right|\right)^{-qs},
\end{split}
\end{equation*}
which proves~\eqref{grw}.

Now we prove~\eqref{cet}. For this, we use Lemma~\ref{lemma:BigClaim} and equation~\eqref{cntefg543888} and we see that, for every~$x\in B_R$, 
\begin{equation*}
\begin{split}
-(-\Delta)_p^s w(x)=\;&-(2-\beta)^{p-1}C_0^{-sp}
(-\Delta)_p^s v\left(\frac{x}{C_0}\right)\\
\le\; & (2-\beta)^{p-1}C_0^{-sp} \frac{\widetilde{C}}{s(1-s)}
\left(v\left(\frac{x}{C_0}\right)+\bar{c}_{k,q,s}r^{-qs}\right)^{m-1}\\
=\;&\tau (2-\beta)^{p-1}\left(v\left(\frac{x}{C_0}\right)+\bar{c}_{k,q,s}r^{-qs}\right)^{m-1}\\
\leq\; & \tau  ( 2-\beta)^{m-1}\left(v\left(\frac{x}{C_0}\right)+\bar{c}_{k,q,s}r^{-qs}\right)^{m-1}\\
\leq\; & \tau\left( (2-\beta)v\left(\frac{x}{C_0}\right)+ (2-\beta)\bar{c}_{k,q,s}r^{-qs}  \right)^{m-1}\\
\leq\; & \tau  \left((2-\beta)v\left(\frac{x}{C_0}\right)+2\bar{c}_{k,q,s}r^{-qs} \right)^{m-1}\\
=  \;  & \tau \left(1+w(x)\right)^{m-1}.
\end{split}
\end{equation*}
This establishes the claim in~\eqref{cet} and completes the proof of Theorem~\ref{lem:1}.
\end{proof}

\section{Proofs of Theorem~\ref{th:fracp>=2} and Corollary~\ref{coro-09}}\label{cardi}

In the section we prove the density estimates in Theorem~\ref{th:fracp>=2}
and Corollary~\ref{coro-09}. 

We begin with the following observation: 

\begin{rem}\label{undcond2llop} {\rm
We notice that in~\eqref{Fcon-812345} we can assume, without loss of generality, that~$c_0\in (1,+\infty)$.
Indeed, if~$c_0\in (0,1]$, we can choose some~$\tilde{c}_0\in (1,+\infty)$  and for~$t:=\big(c_0/{\tilde{c}_0}\big)^{\frac{1}{n}}$ we can consider the rescaled function 
\begin{equation*}
\tilde{u}(x):=u(t x).
\end{equation*}
Then, if we define~$\tilde{r}_0:={r_0}/{t}$, by scaling we obtain that
\begin{equation*}
\mathcal{L}^n\left(B_{\tilde{r}_0} \cap \left\lbrace \tilde{u}>\theta_1 \right\rbrace  \right)=\frac{1}{t^n} \mathcal{L}^n\left(B_{r_0}\cap \left\lbrace u>\theta_1 \right\rbrace\right)
> \frac{c_0}{t^n}=\tilde{c}_0.
\end{equation*} 
As a consequence, being~$\tilde{u}$ a minimizer\footnote{In this paper, for any~$r>0$, we use the notation
$$ \Omega_r:=r\Omega:=\big\{ rx \;{\mbox{ with }}\; x\in\Omega\big\}.$$} for~$\mathcal{E}^p_s(\cdot,\Omega_{{1}/{t}})$ with potential~$\widetilde{W}:=t^{sp}W$, according to~\eqref{benz} we obtain the existence of some~$R^*$ and~$\tilde{c}$, depending only on~$n$, $s$, $p$, $c_0$, $W$, $r_0$, $\theta_1$, $\theta_2$ and~$\tilde{c}_0$, such that, for any~$r\geq R^*$ satisfying~$B_{\frac{3}{2}r} \subset \Omega_{{1}/{t}}$,
\begin{equation*}
\begin{split}
\tilde{c}r^n &<c_{m,\theta_*}\int_{B_r\cap \left\lbrace \theta_{*}<\tilde{u}\leq \theta^* \right\rbrace}\left|1+\tilde{u}(x)\right|^m\,dx+
\mathcal{L}^n(B_r\cap \left\lbrace \tilde{u}>\theta_2 \right\rbrace)\\
&= \frac{c_{m,\theta_*}}{t^n}\int_{B_{tr}\cap \left\lbrace \theta_{*}< u \leq \theta^* \right\rbrace}\left|1+u(x)\right|^m\,dx
+\frac{1}{t^n}\mathcal{L}^n\left( B_{t r}\cap \left\lbrace u>\theta_2 \right\rbrace\right)
\end{split}
\end{equation*} 
thus proving the density estimates for~$u$ with constants~$R^* t$ and~$\tilde{c}$.
}\end{rem}

\begin{proof}[Proof of Theorem~\ref{th:fracp>=2}]
Thanks to Remark~\ref{undcond2llop}, we can assume, without loss of generality, that 
\begin{equation}\label{c0grande1}
c_0\in (1,+\infty).  
\end{equation}
Let~$R\in (0,+\infty)$ such that~$B_R\subset\Omega$ and~$w\in C(\R^n,(-1,1])$ such that~$w\equiv 1$ in~$\R^n\setminus B_R$.
A more specific choice of~$w$ will be made
later on in the proof (namely~$w$
will be the barrier constructed in Theorem~\ref{lem:1}
with a suitable choice of the parameter~$\tau$). 

Moreover, we define 
\begin{equation*}
v:=\min\left\lbrace  u,w \right\rbrace
\end{equation*}
and we observe that 
\begin{equation}\label{eq:boundcond}
v=u \quad\mbox{in}\quad \R^n\setminus B_R. 
\end{equation}

To ease notation, we also set
\begin{equation*}
D:=(\R^n\times \R^n)\setminus \big((\R^n\setminus B_{R})\times(\R^n\setminus B_{R})\big).
\end{equation*}
In this way, recalling the definition of the interaction energy~$\mathcal{K}_s^p$
in~\eqref{kinetic},
we see that, for every~$f:\R^n \to \R$,
\begin{equation}\label{ssebgted}
\mathcal{K}_s^p(f,B_R)=\frac{1}{2} \int_{D} \frac{\left|f(x)-f(y)\right|^p}{\left|x-y\right|^{n+sp}}\,dx\,dy.
\end{equation}

Now we introduce the following exponents 
\begin{equation}\label{holeo}
\hat{p}:=\max\left\lbrace 2,p \right\rbrace\quad\mbox{and}\quad l:=\min\left\lbrace 2,p \right\rbrace.
\end{equation}
We claim that, for~$\tau:=\frac{c_1}{p}$, $\bar{R}_{\tau}:=\bar{R}_{s,\tau,n,p,m}\in(0,+\infty)$ given in~\eqref{chnecdloro} and~$w\in C(\R^n,(-1,1])$ given by Theorem~\ref{lem:1},
there exists~$\hat{c}_p\in(0,+\infty)$ such that, for every~$R\in \left[\bar{R}_{\tau},+\infty\right)$ satisfying~$B_{R}\subset\Omega$,
\begin{equation}\label{coleottero}
\begin{split}
\mathcal{K}_{\frac{sl}{2}}^{\hat{p}}(u-v,B_R) \leq \frac{\Lambda+2c_1}{\hat{c}_p} \int_{B_R\cap \left\lbrace u>\max\left\lbrace \theta_*,w \right\rbrace   \right\rbrace} \left|1+w(x)\right|^{m-1} \,dx.
\end{split}
\end{equation}
Here above, $\Lambda$ is given in~\eqref{potential} and~$c_1$ in~\eqref{potential1}.

To prove this, if~$p\in [2,+\infty)$ we use formula~(4.3) in~\cite{cad} to deduce that, for all~$x,y\in \R^n$,
\begin{equation}\label{sflop56}
\begin{split}&
\frac{1}{2^{p-1}-1}\left|(u-v)(x)-(u-v)(y)\right|^p  +\left|v(x)-v(y)\right|^p-\left|u(x)-u(y)\right|^p\\ &\qquad
\leq  p (v(x)-v(y))\left|v(x)-v(y)\right|^{p-2}\big(v(x)-v(y)-(u(x)-u(y))\big),
\end{split}
\end{equation}
while if~$p\in (1,2)$
we use formula~(4.4) in~\cite{cad} to obtain that, for all~$x,y\in\R^n$, 
\begin{equation*}
\begin{split}&
\frac{3p(p-1)}{4^2}\frac{\left|(u-v)(x)-(u-v)(y)\right|^2}{(\left|v(x)-v(y)\right|+\left|u(x)-u(y)\right|)^{2-p}}+\left|v(x)-v(y)\right|^p-\left|u(x)-u(y)\right|^p \\&\qquad
\leq  p (v(x)-v(y))\left|v(x)-v(y)\right|^{p-2}\big(v(x)-v(y)-(u(x)-u(y))\big). 
\end{split}
\end{equation*}
Since~$\left\|v\right\|_{L^\infty(\R^n)}$, $\left\|u\right\|_{L^\infty(\R^n)}\leq 1$, from the last equation we obtain that, for every~$p\in (1,2)$ and~$x,y\in\R^n$,
\begin{equation}\label{eqr}
\begin{split}&
\frac{3p(p-1)}{4^{4-p}} \left|(u-v)(x)-(u-v)(y)\right|^2 +\left|v(x)-v(y)\right|^p-\left|u(x)-u(y)\right|^p \\&\qquad
\leq p (v(x)-v(y))\left|v(x)-v(y)\right|^{p-2}\big(v(x)-v(y)-(u(x)-u(y))\big).
\end{split}
\end{equation}

Therefore, we define the constant 
\begin{equation*}
\hat{c}_p:=\begin{dcases} \frac{1}{2^{p-1}}\quad &\mbox{if}\quad p\in [2,+\infty),\\
\frac{3p(p-1)}{4^{4-p}}\quad &\mbox{if}\quad p\in (1,2)
\end{dcases}
\end{equation*} and we infer
from~\eqref{sflop56} and~\eqref{eqr} that, for every~$p\in(1,+\infty)$
and for every~$x,y \in\R^n$, 
\begin{equation}\label{fygduhqyu3r7832ifghegf3748gfveh}
\begin{split}&
\hat{c}_p \left|(u-v)(x)-(u-v)(y)\right|^{\hat{p}} +\left|v(x)-v(y)\right|^p-\left|u(x)-u(y)\right|^p \\
&\qquad \leq p (v(x)-v(y))\left|v(x)-v(y)\right|^{p-2}\left(v(x)-v(y)-(u(x)-u(y))\right).
\end{split}
\end{equation}

Now, recalling~\eqref{ssebgted}, we obtain that
\begin{eqnarray*}
&& \hat{c}_p \mathcal{K}_{\frac{s l}{2}}^{\hat{p}}(u-v,B_R)+\mathcal{K}_{s}^p(v,B_R)-\mathcal{K}_s^p(u,B_R)\\
&=&\frac{\hat{c}_p}{2}\int_{D} \frac{\left|(u-v)(x)-(u-v)(y)\right|^{\hat{p}}}{\left|x-y\right|^{n+\hat{p}\frac{sl}{2}}}\,dx\,dy\\&&\qquad+\frac{1}{2}\int_{D} \frac{\left|v(x)-v(y)\right|^p}{\left|x-y\right|^{n+sp}}\,dx\,dy-\frac{1}{2}\int_{D} \frac{\left|u(x)-u(y)\right|^p}{\left|x-y\right|^{n+sp}}\,dx\,dy.
\end{eqnarray*}
We remark that, in light of~\eqref{holeo},
$$  \hat{p}\frac{sl}{2}=\frac{2sp}{2}=sp,$$
and therefore
\begin{eqnarray*}
&& \hat{c}_p \mathcal{K}_{\frac{s l}{2}}^{\hat{p}}(u-v,B_R)+\mathcal{K}_{s}^p(v,B_R)-\mathcal{K}_s^p(u,B_R)\\
&=&\frac{\hat{c}_p}{2}\int_{D} \frac{\left|(u-v)(x)-(u-v)(y)\right|^{\hat{p}}}{\left|x-y\right|^{n+sp}}\,dx\,dy\\&&\qquad+\frac{1}{2}\int_{D} \frac{\left|v(x)-v(y)\right|^p}{\left|x-y\right|^{n+sp}}\,dx\,dy-\frac{1}{2}\int_{D} \frac{\left|u(x)-u(y)\right|^p}{\left|x-y\right|^{n+sp}}\,dx\,dy.
\end{eqnarray*}
Employing~\eqref{fygduhqyu3r7832ifghegf3748gfveh}
and then recalling~\eqref{eq:boundcond} we thereby obtain that
\begin{eqnarray*}
&& \hat{c}_p \mathcal{K}_{\frac{s l}{2}}^{\hat{p}}(u-v,B_R)+\mathcal{K}_{s}^p(v,B_R)-\mathcal{K}_s^p(u,B_R)\\&
\leq & \frac{p}{2} \int_{D} \frac{(v(x)-v(y))\left|v(x)-v(y)\right|^{p-2}\left(v(x)-v(y)-(u(x)-u(y))\right)}{\left|x-y\right|^{n+sp}}\,dx\,dy
\\&=& \frac{p}{2}\int_{\R^n\times\R^n} \frac{(v(x)-v(y))\left|v(x)-v(y)\right|^{p-2}\left(v(x)-v(y)-(u(x)-u(y))\right)}{\left|x-y\right|^{n+sp}}\,dx\,dy\\
&= & p \int_{\R^n\times\R^n} \frac{(v(y)-v(x))\left|v(x)-v(y)\right|^{p-2}\left(u(x)-v(x)\right)}{\left|x-y\right|^{n+sp}}\,dx\,dy.
\end{eqnarray*}
As a result, making again use of~\eqref{eq:boundcond} and of the fact that~$w\equiv1$ in~$\R^n\setminus B_R$,
\begin{equation}\label{987654jhgfdjhgfdnbvcytr}\begin{split}
& \hat{c}_p \mathcal{K}_{\frac{s l}{2}}^{\hat{p}}(u-v,B_R)+\mathcal{K}_{s}^p(v,B_R)-\mathcal{K}_s^p(u,B_R)\\
\leq\;& p \int_{\R^n}\left(
(u(x)-v(x))\int_{\R^n} \frac{(v(y)-v(x))\left|v(x)-v(y)\right|^{p-2}}{\left|x-y\right|^{n+sp}}\,dy\right)\,dx\\
=\;& p \int_{\left\lbrace u>v=w\right\rbrace}\left(
(u(x)-v(x))\int_{\R^n} \frac{(v(y)-w(x))\left|w(x)-v(y)\right|^{p-2}}{\left|x-y\right|^{n+sp}}\,dy\right)\,dx\\
=\;& p \int_{B_R\cap \left\lbrace u>v=w\right\rbrace}\left(
(u(x)-v(x))\int_{\R^n} \frac{(v(y)-w(x))\left|w(x)- v(y)\right|^{p-2}}{\left|x-y\right|^{n+sp}}\,dy\right)\,dx.
\end{split}\end{equation}

Now, we define the function~$\phi_p(t):=t|t|^{p-2}$ and we observe that~$\phi_p$ is nondecreasing, since
$$ \phi_p'(t)=|t|^{p-2}+t(p-2) |t|^{p-4} t= |t|^{p-2}+(p-2) |t|^{p-2} =
(p-1) |t|^{p-2} \ge0.
$$
As a consequence, since~$v(y)\le w(y)$, we see that
$$ \phi_p\big(v(y)-w(x)\big)\le \phi_p\big(w(y)-w(x)\big). $$
Plugging this information into~\eqref{987654jhgfdjhgfdnbvcytr}, we thus find that
\begin{eqnarray*}
&& \hat{c}_p \mathcal{K}_{\frac{s l}{2}}^{\hat{p}}(u-v,B_R)+\mathcal{K}_{s}^p(v,B_R)-\mathcal{K}_s^p(u,B_R)\\
&\le & p \int_{B_R\cap \left\lbrace u>v=w\right\rbrace}\left(
(u(x)-v(x))\int_{\R^n} \frac{(w(y)-w(x))\left|w(x)- w(y)\right|^{p-2}}{\left|x-y\right|^{n+sp}}\,dy\right)\,dx\\
&= & -p\int_{B_R\cap \left\lbrace u>v=w\right\rbrace}(u(x)-v(x))(-\Delta)_p^s w(x)\,dx.
\end{eqnarray*}

{F}rom this and the definition of the energy in~\eqref{energy}
we obtain that 
\begin{equation*}
\begin{split}
\hat{c}_p \mathcal{K}_{\frac{sl}{2}}^{\hat{p}}(u-v, B_R)
\le\;& \mathcal{E}(u,B_R)-\mathcal{E}(v,B_R)+\int_{B_R} W(v(x))-W(u(x))\,dx\\
& \qquad-p\int_{B_R\cap \left\lbrace u>v=w\right\rbrace}(u(x)-v(x))(-\Delta)_p^s w(x)\,dx.
\end{split}
\end{equation*}
Thus, the minimality of~$u$ in~$B_R$ and~\eqref{eq:boundcond} give that
\begin{equation}\label{7t584923h2345678hgfjdhueriwu}
\begin{split}&
\hat{c}_p \mathcal{K}_{\frac{sl}{2}}^{\hat{p}}(u-v, B_R) \\
\leq\;& \int_{B_R} W(v(x))-W(u(x))\,dx-p\int_{B_R\cap \left\lbrace u>v=w\right\rbrace}(u(x)-v(x))(-\Delta)_p^s w(x)\,dx\\
=\;&\int_{B_R \cap \left\lbrace u>w \right\rbrace} W(w(x))-W(u(x))\,dx-p\int_{B_R\cap \left\lbrace u>v=w\right\rbrace}(u(x)-v(x))(-\Delta)_p^s w(x)\,dx.
\end{split}
\end{equation}

Now, we observe that, thanks to~\eqref{potential1}, if~$x\in\{u>w\}$,
\begin{eqnarray*}
&& W(u(x))- W(w(x))=\int^{u(x)}_{w(x)} W'(\xi)\,d\xi\ge
c_1 \int^{u(x)}_{w(x)} |1+\xi|^{m-1}\,d\xi\\&&\qquad \qquad \ge
c_1 |1+w(x)|^{m-1}(u(x)-w(x)).
\end{eqnarray*}
Therefore, using also~\eqref{potential},
\begin{equation*}
\begin{split}
& \int_{B_R\cap \left\lbrace u>w \right\rbrace} W(w(x))-W(u(x))\,dx\\
=\;&\int_{B_R\cap \left\lbrace w<u\leq \theta_* \right\rbrace}W(w(x))-W(u(x))\,dx+ \int_{B_R\cap \left\lbrace u>\max\left\lbrace \theta_*,w \right\rbrace   \right\rbrace} W(w(x))-W(u(x)) \,dx\\
\leq\; & -c_1\int_{B_R\cap \left\lbrace w<u\leq \theta_* \right\rbrace}\left|1+w(x)\right|^{m-1}(u(x)-w(x))\,dx+ \Lambda \int_{B_R\cap \left\lbrace u>\max\left\lbrace \theta_*,w \right\rbrace   \right\rbrace} \left|1+w(x)\right|^m \,dx .
\end{split}
\end{equation*}
Accordingly, using this fact into~\eqref{7t584923h2345678hgfjdhueriwu},
\begin{equation*}
\begin{split}&
\hat{c}_p \mathcal{K}_{\frac{sl}{2}}^{\hat{p}}(u-v, B_R)\\
\leq\;&  -c_1\int_{B_R\cap \left\lbrace w<u\leq \theta_* \right\rbrace}\left|1+w(x)\right|^{m-1}(u(x)-w(x))\,dx+ \Lambda \int_{B_R\cap \left\lbrace u>\max\left\lbrace \theta_*,w \right\rbrace   \right\rbrace} \left|1+w(x)\right|^{m-1} \,dx\\
&\qquad-p\int_{B_R\cap \left\lbrace u>v=w\right\rbrace}(u(x)-v(x))(-\Delta)_p^s w(x)\,dx\\
=\;&-c_1\int_{B_R\cap \left\lbrace u>w \right\rbrace}\left|1+w(x)\right|^{m-1}(u(x)-w(x))\,dx\\&\qquad
+c_1 \int_{B_R\cap \left\lbrace u>\max\left\lbrace w,\theta_*\right\rbrace \right\rbrace}\left|1+w(x)\right|^{m-1}(u(x)-w(x))\,dx\\
 &\qquad + \Lambda \int_{B_R\cap \left\lbrace u>\max\left\lbrace \theta_*,w \right\rbrace   \right\rbrace} \left|1+w(x)\right|^{m-1} \,dx-p\int_{B_R\cap \left\lbrace u>v=w\right\rbrace}(u(x)-v(x))(-\Delta)_p^s w(x)\,dx.
\end{split}
\end{equation*}

The computations done so far hold true for any function~$w\in C(\R^n,(-1,1])$ such that~$w\equiv 1$ in~$\R^n\setminus B_R$. We now
choose~$w$ as in Theorem~\ref{lem:1} with~$\tau:=\frac{c_1}{p}$.
Then, if~$\bar{R}_{\tau}:=\bar{R}_{s,\tau,n,p,m}\in(0,+\infty)$ is given as in~\eqref{chnecdloro},
thanks to Theorem~\ref{lem:1}
we obtain that, for every~$R\in\left[\bar{R}_{\tau},+\infty\right)$ such that~$B_R\subset\Omega$,
\begin{equation*}
\begin{split}&
\hat{c}_{p} \mathcal{K}_{\frac{sl}{2}}^{\hat{p}}(u-v,B_R)\\ 
\leq\;&  -c_1\int_{B_R\cap \left\lbrace u>w \right\rbrace}\left|1+w(x)\right|^{m-1}(u(x)-w(x))\,dx\\&\qquad
+c_1 \int_{B_R\cap \left\lbrace u>\max\left\lbrace w,\theta_*\right\rbrace \right\rbrace}\left|1+w(x)\right|^{m-1}(u(x)-w(x))\,dx\\
 &\qquad  + \Lambda \int_{B_R\cap \left\lbrace u>\max\left\lbrace \theta_*,w \right\rbrace   \right\rbrace} \left|1+w(x)\right|^{m-1} \,dx+c_1\int_{B_R\cap \left\lbrace u>v=w\right\rbrace}(u(x)-v(x))\left|1+w(x)\right|^{m-1}\,dx\\
=\;& \Lambda \int_{B_R\cap \left\lbrace u>\max\left\lbrace \theta_*,w \right\rbrace   \right\rbrace} \left|1+w(x)\right|^{m-1} \,dx+ c_1 \int_{B_R\cap \left\lbrace u>\max\left\lbrace w, \theta_{*}\right\rbrace\right\rbrace}(u(x)-v(x))\left|1+w(x)\right|^{m-1}\,dx\\
\leq\;&  \left(\Lambda+2c_1\right) \int_{B_R\cap \left\lbrace u>\max\left\lbrace \theta_*,w \right\rbrace   \right\rbrace} \left|1+w(x)\right|^{m-1}.
\end{split}
\end{equation*} 
This concludes the proof of the claim in~\eqref{coleottero}. 

Now, let~$C_{\tau}=C_{s,\tau,n,p,m}$ be as in~\eqref{straimpoconst}
with~$\tau=\frac{c_1}{p}$, and define 
\begin{equation*}
k_0:=\left(\frac{2 C_{\tau} }{1+\theta_*}\right)^{\frac{m-1}{ps}}-1.
\end{equation*}
Note that by the definitions of~$C_{\tau}$ and~$\theta_*$ we have that
\begin{equation}\label{k0estimatao}\begin{split}&
k_0\geq C_{\tau}^\frac{m-1}{ps}-1
= \left(2^{\frac{p}{m-1}+1}\max\left\lbrace 1, \bar{R}_\tau^{\frac{ps}{m-1}}\right\rbrace\right)^\frac{m-1}{ps}-1\\&\qquad 
=2^{\frac{1}{s}+\frac{m-1}{ps}}
\max\left\lbrace 1, \bar{R}_\tau
\right\rbrace-1\ge  \max\left\lbrace 1, \bar{R}_\tau
\right\rbrace .
\end{split}
\end{equation}
Thanks to this and~\eqref{grw},
we have that if~$R\in(k_0,+\infty)$ and~$k\in (k_0,R)$, for all~$x\in B_{R-k}$,
\begin{equation*}
w(x)\leq -1+\frac{C_{\tau }}{(1+R-\left|x\right|)^{\frac{ps}{m-1}}}\leq  -1+\frac{C_{\tau}}{(1+k)^{\frac{ps}{m-1}}}\leq -1+\frac{1+\theta_*}{2}.
\end{equation*}
This gives that, for all~$x\in B_{R-k}\cap\{ u>\theta_* \}$,
\begin{equation}\label{zeroe9}
\left|u(x)-v(x)\right|=u(x)-v(x)\geq u(x)-w(x)\geq \frac{1+\theta_*}{2}. 
\end{equation}

Now, for every~$t\in (0,+\infty)$ we denote by 
\begin{equation}\label{t74r8o21Vdasaunyin}
V(t):=\mathcal{L}^n(B_t\cap \left\lbrace u>\theta_* \right\rbrace).
\end{equation}
With this notation, using~\eqref{zeroe9}, we have that, for every~$R\in (k_0,+\infty)$ and~$k\in (k_0,R)$ satisfying~$B_{R}\subset\Omega$,
\begin{eqnarray*}
\big(V(R-k)\big)^{\frac{n-sp}{n}}&=&\left(\int_{B_{R-k}\cap \left\lbrace u>\theta_*\right\rbrace}\,dx\right)^{\frac{n-sp}{n}}\\
&\leq& \left(\frac{2}{1+\theta_{*}}\right)^{\hat{p}} \left(\int_{B_{R-k}\cap \left\lbrace u>\theta_*\right\rbrace}\left|u(x)-v(x)\right|^{\frac{n\hat{p}}{n-sp}} \,dx\right)^{\frac{n-sp}{n}},
\end{eqnarray*}
where~$\hat{p}$ has been defined in~\eqref{holeo}.

We also denote by~$s':=\frac{sl}{2}$, with~$l$ given in~\eqref{holeo}, and we observe that
$$  sp = \frac{2sp}2 =\frac{\hat{p}sl }{2}=\hat{p}s'.
$$ Hence, we can write that
\begin{equation}\label{mnbvcxasdfghjk987654}\begin{split}
\big(V(R-k)\big)^{\frac{n-sp}{n}}&\leq 
\left(\frac{2}{1+\theta_{*}}\right)^{\hat{p}} \left(\int_{B_{R-k}\cap \left\lbrace u>\theta_*\right\rbrace}\left|u(x)-v(x)\right|^{\frac{n\hat{p}}{n-\hat{p}s'}} \,dx\right)^{\frac{n-\hat{p}s'}{n}}
\\ &\leq \left(\frac{2}{1+\theta_{*}}\right)^{\hat{p}} \left(\int_{\R^n}\left|u(x)-v(x)\right|^{\frac{n\hat{p}}{n-\hat{p}s'}} \,dx\right)^{\frac{n-\hat{p}s'}{n}}.\end{split}\end{equation}

Now, we recall the Sobolev inequality of Theorem~1 in~\cite{MR1940355}, according to which
there exists~$c_{n,p}\in(0,+\infty)$ such that, for every~$f\in W_0^{s,p}(\R^n)$,
\begin{equation*}
\left(\int_{\R^n}\left|f(x)\right|^\frac{pn}{n-sp}\,dx\right)^\frac{n-sp}{n}\leq c_{n,p}\frac{s(1-s)}{(n-sp)^{p-1}}\int_{\R^n}\int_{\R^n}\frac{\left|f(x)-f(y)\right|^p}{\left|x-y\right|^{n+sp}}\,dx\,dy.
\end{equation*}
{F}rom this inequality (used with~$f:=u-v$) and~\eqref{mnbvcxasdfghjk987654}, we deduce that
$$ \big(V(R-k)\big)^{\frac{n-sp}{n}}\le
c_{n,\hat{p}}\frac{s'(1-s')}{(n-s'\hat{p})^{\hat{p}-1}}\left(\frac{2}{1+\theta_{*}}\right)^{\hat{p}}       \int_{\R^n}\int_{\R^n} \frac{\left|(u-v)(x)-(u-v)(y)\right|^{\hat{p}}}{\left|x-y\right|^{n+s'\hat{p}}}\,dy\,dx.$$
Thus, making use of~\eqref{eq:boundcond},
\begin{eqnarray*} \big(V(R-k)\big)^{\frac{n-sp}{n}}&\le&
 c_{n,\hat{p}}\frac{s'(1-s')}{(n-sp)^{\hat{p}-1}}\left(\frac{2}{1+\theta_{*}}\right)^{\hat{p}}\int_{D} \frac{\left|(u-v)(x)-(u-v)(y)\right|^{\hat{p}}}{\left|x-y\right|^{n+s'\hat{p}}}\,dy\,dx
\\ &=& c_{n,\hat{p}}\frac{s'(1-s')}{(n-sp)^{\hat{p}-1}}\left(\frac{2}{1+\theta_{*}}\right)^{\hat{p}}\mathcal{K}_{s'}^{\hat{p}}(u-v,B_R).
\end{eqnarray*}
This and \eqref{coleottero} give that
$$ \big(V(R-k)\big)^{\frac{n-sp}{n}}\le
 c_{n,\hat{p}}\frac{s'(1-s')}{(n-sp)^{\hat{p}-1}}\left(\frac{2}{1+\theta_{*}}\right)^{\hat{p}}\frac{\Lambda+2c_1}{\hat{c}_p}\int_{B_R\cap \left\lbrace u>\max\left\lbrace \theta_*,w \right\rbrace   \right\rbrace} \left|1+w(x)\right|^{m-1} \,dx.$$
Therefore, in light of~\eqref{grw},
\begin{eqnarray*}
&&\big(V(R-k)\big)^{\frac{n-sp}{n}}
\\&\leq& c_{n,\hat{p}}\frac{s'(1-s')}{(n-sp)^{\hat{p}-1}}\left(\frac{2}{1+\theta_{*}}\right)^{\hat{p}}\frac{\Lambda+2c_1}{\hat{c}_p}C_{\tau}^{m-1}\int_{B_R\cap \left\lbrace u>\max\left\lbrace \theta_*,w \right\rbrace   \right\rbrace} (1+R-\left|x\right|)^{-ps} \,dx\\
&\leq& c_{n,\hat{p}}\frac{s'(1-s')}{(n-sp)^{\hat{p}-1}}\left(\frac{2}{1+\theta_{*}}\right)^{\hat{p}}\frac{\Lambda+2c_1}{\hat{c}_p}C_{\frac{c_1}{p}}^{m-1} \int_{B_R\cap \left\lbrace u> \theta_* \right\rbrace} (1+R-\left|x\right|)^{-ps} \,dx
.\end{eqnarray*}
As a result, using the coarea formula and setting
\begin{equation*}
c=c_{s,n,p,m,\Lambda,c_1,\theta_*}:=c_{n,\hat{p}}\frac{s'(1-s')}{(n-sp)^{\hat{p}-1}}\left(\frac{2}{1+\theta_{*}}\right)^{\hat{p}}\frac{\Lambda+2c_1}{\hat{c}_p}C_{\frac{c_1}{p}}^{m-1},
\end{equation*}
we conclude that, for every~$R\in (k_0,+\infty)$ and~$k\in (k_0,R)$ satisfying~$B_{R}\subset\Omega$,
\begin{equation}\label{sncvfbythuii8}
\big(V(R-k)\big)^{\frac{n-sp}{n}}
\leq c\int_{0}^R (1+R-\left|t\right|)^{-ps}V'(t) \,dt.
\end{equation}

Now we integrate~\eqref{sncvfbythuii8} in~$R\in \left[\rho,\frac{3}{2}\rho\right]$ with~$\rho\geq  4k $ and~$B_{\frac{3}{2}\rho}\subset \Omega$. Also, we use the fact that~$s\in \left(0,\frac{1}{p}\right)$ together with~\eqref{k0estimatao} and obtain that  
\begin{equation*}
\begin{split}
\frac{\rho}{2}\big( V(\rho-k)\big)^{\frac{n-sp}{n}} &\leq  \int_{\rho}^{\frac{3}{2}\rho} \big(V(R-k)\big)^\frac{n-sp}{n}\,dR\\
&\leq c\int_{\rho}^{\frac{3}{2}\rho} \left(\int_{0}^R (1+R-\left|t\right|)^{-sp}V'(t) \,dt\right)\,dR\\
&\leq c \int_{0}^{\frac{3}{2}\rho}\left(
V'(t) \int_{t}^{\frac{3}{2}\rho}(1+R-\left|t\right|)^{-sp} \,dR\right)\,dt\\
&=\frac{c }{1-sp} \int_{0}^{\frac{3}{2}\rho}V'(t) \left[\left(1+\frac{3}{2}\rho-\left|t\right|\right)^{1-sp}-1\right]\,dt\\
&\leq  \frac{c }{1-sp}2^{1-sp}\rho^{1-sp}\int_{0}^{\frac{3}{2}\rho}V'(t) \,dt\\
&\leq \frac{2c }{1-sp}\rho^{1-sp} V\left(\frac{3}{2}\rho\right).
\end{split}
\end{equation*}
In particular, setting~$r:=\rho-k$ and recalling that~$\rho \geq 4k$ we deduce that 
\begin{equation}\label{BuffSpri}
r^{sp}\left( V(r)\right)^{\frac{n-sp}{n}}\leq \hat{c} \,V(2r),
\end{equation}
with
$$\hat{c}=\hat{c}_{s,n,p,m,\Lambda,c_1,\theta_*}:=\frac{4c}{1-sp}.$$
{F}rom now~$k>k_0$ is fixed once and for all.

{F}rom~\eqref{Fcon-812345}, \eqref{c0grande1} and~\eqref{BuffSpri} we see that~\eqref{condi1} and~\eqref{epo-vdfceobgt} in Lemma~\ref{GE-XE-JU} hold true with~$V$ as in~\eqref{t74r8o21Vdasaunyin} and with the following choices
\begin{equation*}
\sigma:=sp,\quad \nu:=n,\quad \gamma:=2,\quad \mu:=c_0,\quad R_0:=\max\left\lbrace 3k, r_0 \right\rbrace \quad\mbox{and}\quad C:=\hat{c}, 
\end{equation*}
as far as~$R\in \left[R_0,+\infty\right)$ is such that~$B_{\frac{3R}{2}}\subset\Omega$. 

Therefore, in virtue of~\eqref{pmecb.54tb6} in Lemma~\ref{GE-XE-JU}, we obtain that there exist~$\tilde{c}\in (0,1)$, depending only on~$s$, $n$, $p$, $m$, $\Lambda$, $c_1$, $\theta_*$, $c_0$, $R_0$ and~$k$,
and~$R_*\in \left[R_0,+\infty\right)$, depending only on~$R_0$, such that, for every~$r\in \left[R_*,+\infty\right)$ satisfying~$B_{\frac{3r}{2}}\subset \Omega$,
\begin{equation*}
V(r)\geq \tilde{c} \,r^n.
\end{equation*}
As a result, for any~$r\in \left[R_*,+\infty\right)$ such that~$B_{\frac{3r}{2}} \subset\Omega$,
\begin{equation}\label{euno}
\tilde{c}\, r^n\leq \mathcal{L}^n(B_r\cap \left\lbrace u>\theta_*\right\rbrace)=\mathcal{L}^n(B_r\cap \left\lbrace u>\theta^*\right\rbrace)+\mathcal{L}^n(B_r\cap \left\lbrace \theta_*<u\leq \theta^*\right\rbrace).
\end{equation}

Now we observe that
\begin{equation*}
\begin{split}
\mathcal{L}^n\left(B_r\cap \left\lbrace \theta_{*}<u\leq \theta^* \right\rbrace\right)&=\int_{B_r\cap \left\lbrace \theta_{*}<u\leq \theta^* \right\rbrace}\,dx\\
& \leq (1+\theta_*)^{-m}\int_{B_r\cap \left\lbrace \theta_{*}<u\leq \theta^* \right\rbrace}\left|1+u(x)\right|^m\,dx.
\end{split}
\end{equation*}
Thus, it follows from this and~\eqref{euno} that, for every~$r\in \left[R_*,+\infty\right)$  such that~$B_{\frac{3r}{2}}\subset \Omega$,
\begin{equation*}
\mathcal{L}^n\left(B_r\cap \left\lbrace u>\theta^* \right\rbrace\right)
\geq c\, r^n- (1+\theta_*)^{-m}\int_{B_r\cap \left\lbrace \theta_{*}<u\leq \theta^* \right\rbrace}\left|1+u(x)\right|^m\,dx.
\end{equation*}
This entails the desired result in~\eqref{benz}.
\end{proof}

\begin{proof}[Proof of Corollary~\ref{coro-09}]
Thanks to the claim~\eqref{BFAOTE} in Theorem~\ref{erfvbgt-098}
and the assumption in~\eqref{condiWzione},
we have that, for every~$r\in (0,+\infty)$ such that~$B_{r+2}\subset \Omega$,
\begin{equation}\label{limirelop}
\begin{split}&
\int_{B_r\cap \left\lbrace \theta_{*}<u\leq \theta^* \right\rbrace}\left|1+u(x)\right|^m\,dx \leq \lambda_{\theta^*}^{-1} \int_{B_r\cap \left\lbrace \theta_{*}<u\leq \theta^* \right\rbrace} W(u(x))\,dx\\   
&\qquad\qquad \leq \lambda_{\theta^*}^{-1}\mathcal{E}_s^p(u,B_r)
\leq \lambda_{\theta^*}^{-1}\bar{C}r^{n-sp}.
\end{split}
\end{equation}
Also, we observe that for every~$r\geq 4$
it holds that~$\frac{3}{2}r\geq r+2$. Hence, from~\eqref{limirelop} and~\eqref{benz} we deduce that, for every~$r\geq \max\left\lbrace 4,R^* \right\rbrace$ such that~$B_{\frac{3}{2}r}\subset \Omega$,
\begin{equation*}
\begin{split}
\tilde{c}\,r^n &\leq c_{m,\theta_*}\int_{B_r\cap \left\lbrace \theta_{*}<u\leq \theta^* \right\rbrace}\left|1+u(x)\right|^m\,dx+\mathcal{L}^n(B_r\cap \left\lbrace u>\theta_2 \right\rbrace)\\
&\leq \lambda_{\theta^*}^{-1}c_{m,\theta_*}\bar{C} r^{n-sp}+\mathcal{L}^n(B_r\cap \left\lbrace u>\theta_2 \right\rbrace).
\end{split}
\end{equation*}
{F}rom this, the claim in~\eqref{benzina} readily follows.\qedhere 
\end{proof}

\section{Proofs of Theorem~\ref{gliutgbedgty} and Corollary~\ref{rgeb}}\label{vdcbnhetrslkoiuojhytbirvec}

We begin this section by establishing the compactness result for sequences of functions~$\left\lbrace u_\epsilon\right\rbrace_\epsilon \subset X^{s,p}(\Omega)$ satisfying the boundedness assumption~\eqref{unifbound}, as stated
in Theorem~\ref{gliutgbedgty}. As already mentioned in the introduction, the convergence is a straightforward consequence of the scaling of~$\mathcal{F}_{s,\epsilon}^p$, the uniform bound in~\eqref{unifbound} and the compact embedding of~$W^{s,p}(\Omega')$ into~$L^1(\Omega')$ for~$\Omega' \Subset \Omega$. The details of the proof are as follows.

\begin{proof}[Proof of Theorem~\ref{gliutgbedgty}]
As a consequence of~\eqref{unifbound}, for every open and bounded set~$\Omega'\Subset \Omega$, we have that
\begin{equation}\label{boundonthekinetic}
\sup_{\epsilon\in (0,1)}\int_{\Omega'}\int_{\Omega'}\frac{\left|u_\epsilon(x)-u_\epsilon(y)\right|^p}{\left|x-y\right|^{n+sp}}\,dx\,dy<+\infty.
\end{equation}
Moreover, since~$u_\epsilon\in X^{s,p}(\Omega)$, we find that, for every~$\epsilon\in (0,1)$,
\begin{equation}\label{geliredcdl}
\left\|u_\epsilon\right\|_{L^p(\Omega')}\leq \left|\Omega'\right|^\frac{1}{p}.
\end{equation}
Thus, it follows from~\eqref{boundonthekinetic},
\eqref{geliredcdl} and Theorem~7.1 in~\cite{MR2944369} that there exists some~$u_0\in L^1(\Omega')$ such that, up to a subsequence,
$u_\epsilon$ converges to~$ u_0$ in~$L^1(\Omega')$ and
pointwise a.e. in~$\Omega'$, as~$\epsilon\to0$.

Furthermore, using~\eqref{q874brygtefd}, \eqref{unifbound} and Fatou's Lemma, we have that 
\begin{equation*}
\begin{split}
0&=\lim_{\epsilon\to 0^+} \epsilon^{sp} \mathcal{F}_{\epsilon}(u_\epsilon,\Omega')\\
&=\lim_{\epsilon \to 0^+} \epsilon^{sp} \mathcal{K}_{s}^p(u_\epsilon,\Omega')+\int_{\Omega'}W(u_\epsilon(x))\,dx\\
&\geq c_2 \lim_{\epsilon \to 0^+} \int_{\Omega'} (1-u_{\epsilon}^2(x))^m\,dx\\
&\geq  c_2\int_{\Omega'} (1-u_{0}^2(x))^m\,dx.
\end{split}
\end{equation*}
This yields that~$u_0$ must be equal to~$-1$ and~$1$ a.e. in~$\Omega'$. Accordingly, if we denote by 
\begin{equation*}
E:=\left\lbrace  x\in \Omega'\mbox{  s.t.  } u_0(x)=1  \right\rbrace
\end{equation*} 
we obtain that~$u_0|_{\Omega'}=\chi_{E}-\chi_{E^c}$.
The proof of Theorem~\ref{gliutgbedgty} is thereby complete.
\end{proof}

In what follows we prove the Hausdorff convergence of the interface
stated in Corollary~\ref{rgeb}.
This result follows from the density estimates in Corollary~\ref{coro-09} together with the H\"older regularity result for minimizers of~$\mathcal{E}_s^p$ that we have recalled in Section~\ref{CH-HCBob}. 

\begin{proof}[Proof of Corollary~\ref{rgeb}]
We argue by contradiction, and we assume that there exist~$\Theta\in (0,1)$, $\delta\in(0,+\infty)$, $R\in(0,+\infty)$ and a sequence~$x_k\in \left\lbrace \left|u_{\epsilon_k}\right|<\Theta\right\rbrace\cap B_R$ such that
either~$B_{\delta}(x_k)\subset E$ or~$B_{\delta}(x_k)\subset E^c$. Without loss of generality, we assume that for every~$k$ it holds that~$B_{\delta}(x_k)\subset E^c$.

Then, according to Theorem~\ref{gliutgbedgty}, up to a subsequence we have that
\begin{equation}\label{12345678098765oiuytrkjhgfdzxcvb}
\begin{split}
0 &= \lim_{k\to+\infty}\int_{B_{\delta}(x_k)\cap \Omega}\left|u_{\epsilon_k}(x)-\chi_E(x)+\chi_{E^c}(x)\right|\,dx\\
&=\lim_{k\to+\infty}\int_{B_{\delta}(x_k)\cap \Omega} \left|u_{\epsilon_k}(x)+1\right|\,dx. 
\end{split}
\end{equation}

Now, we define the rescaled set 
\begin{equation*}
\Omega_k:=\left\lbrace \frac{x-x_k}{\epsilon_k}\mbox{  s.t.  }\, x\in \Omega \right\rbrace,
\end{equation*}
and the sequence of functions 
\begin{equation*}
w_k(x):=u_{\epsilon_k}\left(x\epsilon_k+x_k\right). 
\end{equation*}
In this way, $w_k$ is a minimizer for~$\mathcal{E}_s^p$ in~$\Omega_k$. Moreover,
\begin{equation}\label{thnlo}
w_k(0)=u_{\epsilon_k}(x_k)>-\Theta. 
\end{equation}

Now, we apply Theorem~\ref{Cozzi} to~$w_k$ with the choice~$
F(t):=(1-s) W(t)$ and
we obtain that~$w_k\in C_{\textit{loc}}^\alpha(\Omega_k)$ for some~$\alpha\in (0,1)$, depending only
on~$n$, $s$ and~$p$, and
there exists~$C\geq 1$, depending only on~$n$, $s$ and~$p$, such that, for every~$R_0\in\left(0,\frac{\operatorname{dist}\,(0,\partial \Omega_k)}{8}\right)$,
\begin{equation}\label{bdcereghyui}
\begin{split}
\left[w_k\right]_{C^\alpha(B_{R_0})} &\leq \frac{C}{R_0^\alpha}\left[\left\|w_k\right\|_{L^\infty(\R^n)}+ \mbox{Tail}(w_k;0,4R_0)+   R_0^s(1-s)^\frac{1}{p}\left\| W\right\|_{L^\infty((-1,1))}^{\frac{1}{p}}\right]\\
&\leq \frac{C}{R_0^{\alpha}} \left[1+\left((1-s)(4R_0)^{sp}\int_{\R^n\setminus B_{4R_0}}\frac{dy}{\left|y\right|^{n+sp}}\right)^\frac{1}{p-1}+R_0^s(1-s)^\frac{1}{p}\left\| W\right\|_{L^\infty((-1,1))}^{\frac{1}{p}}\right]\\
&=\frac{C}{R_0^\alpha}\left[1+ \left((1-s)\frac{\omega_{n-1}}{sp}\right)^\frac{1}{p-1}+ R_0^s(1-s)^\frac{1}{p}\left\| W\right\|_{L^\infty((-1,1))}^{\frac{1}{p}}\right].
\end{split}
\end{equation} 
Furthermore, if we set~$R_0:=\frac{\operatorname{dist}\,(\partial B_R, \partial \Omega)}{8}\in(0,+\infty)$, then~$R_0\in\left(0,\frac{\operatorname{dist}\,(0,\partial\Omega_k)}{8}\right)$ for every~$k$. {F}rom this, \eqref{thnlo} and~\eqref{bdcereghyui}, it follows that there exists~$r_0$ such that~$w_k(x)>-\Theta$ for every~$x\in B_{r_0}$.

Therefore, we conclude that
\begin{equation*}
\mathcal{L}^n(B_{r_0}\cap \left\lbrace w_k>-\Theta \right\rbrace)\geq \mathcal{L}^n(B_{r_0})=:c_0.
\end{equation*}
We can thereby apply Corollary~\ref{coro-09} and obtain that for any~$\theta\in (-1,1)$ there exist~$\widetilde{R}:=\widetilde{R}_{s,n,p,m,\theta_*,r_0,c_2}\in \left[r_0,+\infty\right)$ and~$\hat{c}:=\hat{c}_{s,n,p,m,\Lambda,c_1,\theta_*,r_0,c_0}\in (0,1)$  such that, for every~$r\in[\widetilde{R},+\infty)$ satisfying~$B_\frac{3r}{2}\subset \Omega_k$,
\begin{equation}\label{12345678098765oiuytrkjhgfdzxcvb2}
\mathcal{L}^n(B_r\cap \left\lbrace w_k> \theta \right\rbrace)\geq \hat{c}\,r^n.
\end{equation} 

Now, we choose~$\theta:=-\frac{1}{2}$ and~$\delta_0\in \left(0,\delta\right]$ such that~$B_{\frac{3}{2}\delta_0}(x_k)\subset\Omega$, and we obtain
from~\eqref{12345678098765oiuytrkjhgfdzxcvb}
and~\eqref{12345678098765oiuytrkjhgfdzxcvb2} that
\begin{equation*}
\begin{split}
0&=\lim_{k\to+\infty}  \int_{B_\delta(x_k)}\left|1+u_{\epsilon_k}(x)\right|\,dx\\
&\geq \lim_{k\to+\infty}  \int_{B_{\delta_0}(x_k)}\left|1-u_{\epsilon_k}(x)\right|\,dx\\
&=\lim_{k\to+\infty}  \epsilon_k^n \int_{B_{\frac{\delta_0}{\epsilon_k}}}\left|1+u_{\epsilon_k}(x\epsilon_k+x_k)\right|\,dx\\
&=\lim_{k\to+\infty} \epsilon_k^n\int_{B_{\frac{\delta_0}{\epsilon_k}}}\left|1+w_k(x)\right|\,dx\\
&\geq \lim_{k\to+\infty} \epsilon_k^n\int_{B_{\frac{\delta_0}{\epsilon_k}}\cap \left\lbrace w_k> \theta \right\rbrace }\left|1+w_k(x)\right|\,dx\\
&\geq \frac{\hat{c}}{2}\,\delta_0^n,
\end{split}
\end{equation*}
thus providing a contradiction.
\end{proof}

\begin{appendix}

\section{Some technical results towards the proof of Theorem~\ref{lem:1}}\label{appebi}

In this section we collect some technical results that are used 
throughout Section~\ref{desmos}
in order to prove Theorem~\ref{lem:1}. 

We start with two estimates for the fractional~$p$-Laplacian of a bounded and
locally Lipschitz function.

\begin{lem}\label{crocodile}
Let~$\rho\in(0,+\infty)$, $p\in (1,+\infty)$, $s\in \left(0,\frac{p-1}{p}\right)$ and~$\psi \in L^\infty\left(\R^n\right)$.

Let~$x\in\R^n$ and assume that
there exists~$\widehat{K}\in (0,+\infty)$
such that, for every~$y\in B_{\rho}(x)$, 
\begin{equation*}
\left|\psi(y)-\psi(x)\right|\leq \widehat{K}\left|y-x\right|.
\end{equation*}

Then, there exists~$c_{n,p}\in (0,+\infty)$ such that
\begin{equation}\label{cnderw}
\left|(-\Delta)_p^s\psi(x)\right|\leq \frac{c_{n,p}}{s(p-1-sp)}\left(\widehat{K}^{p-1} \rho^{-sp+p-1}+\left\|\psi\right\|_{L^\infty(\R^n)}^{p-1}\rho^{-sp}\right).
\end{equation}
\end{lem}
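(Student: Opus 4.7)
\textbf{Proof plan for Lemma~\ref{crocodile}.}
The natural strategy is to split the integral defining $(-\Delta)_p^s\psi(x)$ at the scale $\rho$, using the local Lipschitz bound on $B_\rho(x)$ and the global $L^\infty$ bound on the complement. Specifically, I would write
\begin{equation*}
(-\Delta)_p^s\psi(x)=\int_{B_\rho(x)}\frac{(\psi(x)-\psi(y))|\psi(x)-\psi(y)|^{p-2}}{|x-y|^{n+sp}}\,dy+\int_{\R^n\setminus B_\rho(x)}\frac{(\psi(x)-\psi(y))|\psi(x)-\psi(y)|^{p-2}}{|x-y|^{n+sp}}\,dy,
\end{equation*}
observing that the condition $s<\tfrac{p-1}{p}$ (equivalently $p-1-sp>0$) is exactly what guarantees that the near-field integrand is absolutely integrable, so no principal value is needed.

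For the near field, the assumption $|\psi(y)-\psi(x)|\le \widehat{K}|y-x|$ on $B_\rho(x)$ yields the pointwise bound
\begin{equation*}
\frac{|\psi(x)-\psi(y)|^{p-1}}{|x-y|^{n+sp}}\le \widehat{K}^{p-1}|x-y|^{p-1-n-sp}.
\end{equation*}
Integrating in polar coordinates gives a contribution controlled by
\begin{equation*}
\omega_{n-1}\widehat{K}^{p-1}\int_0^\rho r^{p-2-sp}\,dr=\frac{\omega_{n-1}}{p-1-sp}\widehat{K}^{p-1}\rho^{p-1-sp},
\end{equation*}
which explains the factor $\frac{1}{p-1-sp}$ in~\eqref{cnderw}.

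For the far field, the bound $|\psi(x)-\psi(y)|\le 2\|\psi\|_{L^\infty(\R^n)}$ and a polar-coordinate computation give
\begin{equation*}
\int_{\R^n\setminus B_\rho(x)}\frac{|\psi(x)-\psi(y)|^{p-1}}{|x-y|^{n+sp}}\,dy\le 2^{p-1}\|\psi\|_{L^\infty(\R^n)}^{p-1}\omega_{n-1}\int_\rho^{+\infty}r^{-1-sp}\,dr=\frac{2^{p-1}\omega_{n-1}}{sp}\|\psi\|_{L^\infty(\R^n)}^{p-1}\rho^{-sp},
\end{equation*}
producing the factor $\frac{1}{s}$ in~\eqref{cnderw}.

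Summing the two estimates and absorbing the factors $\omega_{n-1}$, $p$, $2^{p-1}$ into a single constant $c_{n,p}\in(0,+\infty)$, and using $\frac{1}{s}+\frac{1}{p-1-sp}\le \frac{C}{s(p-1-sp)}$ for an appropriate $C$, yields~\eqref{cnderw}. There is no genuine obstacle here: the argument is a splitting of the integral plus two elementary computations; the only subtlety is to notice that the hypothesis $s<\tfrac{p-1}{p}$ is precisely what makes the near-field contribution finite, and to keep track of the dependence of the constant on $s$ so as to isolate the factor $\tfrac{1}{s(p-1-sp)}$ as stated.
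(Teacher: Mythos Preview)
Your proposal is correct and follows essentially the same approach as the paper: split the integral at scale $\rho$, use the Lipschitz bound on $B_\rho(x)$ to get the $\frac{\omega_{n-1}}{p-1-sp}\widehat{K}^{p-1}\rho^{p-1-sp}$ term, use the $L^\infty$ bound on the complement to get the $\frac{2^{p-1}\omega_{n-1}}{sp}\|\psi\|_{L^\infty}^{p-1}\rho^{-sp}$ term, and then absorb the dimensional and $p$-dependent factors into $c_{n,p}$ while pulling out the common factor $\frac{1}{s(p-1-sp)}$. The paper carries out exactly these computations, including the same polar-coordinate integrals and the same observation that $sp<p-1$ is what makes the near-field integral converge.
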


\begin{proof}
We compute 
\begin{equation*}
\begin{split}&
\left|(-\Delta)_p^s\psi(x)\right|\\&=\left|\int_{B_{\rho}(x)}\frac{\left(\psi(y)-\psi(x)\right)\left|\psi(y)-\psi(x)\right|^{p-2}}{\left|x-y\right|^{n+sp}}\,dy+\int_{\R^n \setminus B_{\rho}(x)}\frac{\left(\psi(y)-\psi(x)\right)\left|\psi(y)-\psi(x)\right|^{p-2}}{\left|x-y\right|^{n+sp}}\,dy\right| \\
&\leq \widehat{K}^{p-1}\int_{B_{\rho}(x)}\frac{dy}{\left|x-y\right|^{n+sp-p+1}}+2^{p-1}\left\|\psi\right\|_{L^\infty(\R^n)}^{p-1}\int_{\R^n \setminus B_{\rho}(x)}\frac{dy}{\left|x-y\right|^{n+sp}} \\
&=\frac{\widehat{K}^{p-1}\omega_{n-1}}{p-1-sp}\rho^{-sp+p-1}+2^{p-1}\left\|\psi\right\|_{L^\infty(\R^n)}^{p-1} \frac{\omega_{n-1}}{sp}\rho^{-sp}.
\end{split}
\end{equation*}
Now since~$sp<p-1$ we conclude that
\begin{equation*}
\begin{split}
\left|(-\Delta)_p^s\psi(x)\right|
&\leq \frac{(p-1)\widehat{K}^{p-1}\omega_{n-1}}{sp(p-1-sp)}\rho^{-sp+p-1}+2^{p-1}\left\|\psi\right\|_{L^\infty(\R^n)}^{p-1} \frac{(p-1)\omega_{n-1}}{sp(p-1-sp)}\rho^{-sp}\\
&\leq \frac{c_{n,p}}{s(p-1-sp)}\left(\widehat{K}^{p-1} \rho^{-sp+p-1}+\left\|\psi\right\|_{L^\infty(\R^n)}^{p-1}\rho^{-sp}\right),
\end{split}
\end{equation*}
which is the desired result.
\end{proof}

\begin{lem}\label{p2s01}
Let~$\rho\in(0,+\infty)$, $p\in [2,+\infty)$, $s\in (0,1)$ and~$\psi\in L^\infty(\R^n)\cap C(\R^n)$.

Let~$x\in\R^n$ and assume that
there exist~$\widehat{K},\widetilde{C}\in (0,+\infty)$ and~$\Sigma(x)\in\R^n$ such that, for every~$y\in B_{\rho}(x)$, 
\begin{equation}\label{condo-87}
\left|\psi(x)-\psi(y)\right|\leq \widehat{K}\left|x-y\right|\quad\mbox{and}\quad \psi(y)-\psi(x)-\Sigma(x)\cdot (y-x)\leq \widetilde{C}\left|x-y\right|^2.
\end{equation}

Then, there exists~$c_{n,p}\in(0,+\infty)$ such that
\begin{equation*}
-(-\Delta)_p^s \psi(x)\leq \frac{c_{n,p}}{s(1-s)}\left(\widetilde{C}\max\left\lbrace\left|\Sigma(x)\right|, \widehat{K} \right\rbrace^{p-2} \rho^{-sp+p}+\left\|\psi\right\|_{L^\infty(\R^n)}^{p-1}\rho^{-sp}\right).
\end{equation*}
\end{lem}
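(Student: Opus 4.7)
The plan is to split the integral defining $-(-\Delta)_p^s \psi(x)$ at the ball $B_\rho(x)$, handle the tail with a crude $L^\infty$ bound, and exploit a symmetrization together with the two hypotheses on $\psi$ to control the singular part. Setting $\phi_p(t) := t|t|^{p-2}$, which is $C^1$ and nondecreasing on $\R$ for $p\ge 2$, we write
\begin{equation*}
-(-\Delta)_p^s\psi(x) = \int_{\R^n} \frac{\phi_p(\psi(y)-\psi(x))}{|y-x|^{n+sp}}\,dy.
\end{equation*}
For the tail, the bound $|\phi_p(\psi(y)-\psi(x))| \le 2^{p-1}\|\psi\|_{L^\infty(\R^n)}^{p-1}$ yields, after integrating in polar coordinates, a contribution of order $c_{n,p}\|\psi\|_{L^\infty(\R^n)}^{p-1}\rho^{-sp}/s$, which matches the second term of the claim (the factor $1/s$ coming from $\int_\rho^{+\infty}t^{-1-sp}\,dt$).

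For the singular part over $B_\rho(x)$, I would perform the change of variables $y = x + z$ and pair $z$ with $-z$, so that
\begin{equation*}
2\int_{B_\rho(x)}\frac{\phi_p(\psi(y)-\psi(x))}{|y-x|^{n+sp}}\,dy = \int_{B_\rho}\frac{\phi_p(a) + \phi_p(b)}{|z|^{n+sp}}\,dz,
\end{equation*}
where $a := \psi(x+z) - \psi(x)$ and $b := \psi(x-z) - \psi(x)$. Writing $\phi_p(a) + \phi_p(b) = \phi_p(a) - \phi_p(-b)$ and applying the mean value theorem, one finds some $\xi$ between $-b$ and $a$ with $\phi_p(a) + \phi_p(b) = (p-1)|\xi|^{p-2}(a+b)$. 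Two pieces of information then collapse this quantity: the Lipschitz hypothesis gives $|\xi|\le\max(|a|,|b|)\le\widehat{K}|z|$, while applying the semi-concavity bound at both $y = x\pm z$ and summing produces $a + b \le 2\widetilde C|z|^2$ (the linear term in $\Sigma(x)$ cancels out).

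The mildly delicate point is that, although $a+b$ need not be nonnegative, the inequality $\phi_p(a)+\phi_p(b) \le 2(p-1)\widetilde{C}\widehat{K}^{p-2}|z|^p$ still holds: if $a+b\ge 0$ it follows by combining $|\xi|^{p-2}\le\widehat{K}^{p-2}|z|^{p-2}$ with $a+b\le 2\widetilde{C}|z|^2$, and if $a+b<0$ then $(p-1)|\xi|^{p-2}(a+b)$ is already nonpositive. Bounding $\widehat{K}^{p-2}$ by $\max\{|\Sigma(x)|,\widehat{K}\}^{p-2}$ and integrating in $z$ over $B_\rho$ yields a contribution of order $c_{n,p}\,\widetilde{C}\max\{|\Sigma(x)|,\widehat{K}\}^{p-2}\rho^{p-sp}/(1-s)$, where the factor $1/(1-s)$ arises from $\int_0^\rho t^{-1-sp+p}\,dt = \rho^{p-sp}/(p-sp) = \rho^{p(1-s)}/(p(1-s))$. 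Summing the tail and singular contributions gives the stated estimate.

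The main obstacle is the mean-value step, which is the reason for requiring $p\ge 2$: when $p<2$, the derivative $\phi_p'(\xi) = (p-1)|\xi|^{p-2}$ is singular at $\xi=0$, so one cannot simply bound $|\xi|^{p-2}$ by $\widehat{K}^{p-2}|z|^{p-2}$ (larger $|\xi|$ makes $|\xi|^{p-2}$ smaller in that regime, and one loses the key monotonicity used above). This is exactly the reason why the case $p\in(1,2)$ in the proof of Proposition~\ref{lemma:BigClaim} is instead handled via the Lipschitz/Taylor estimate of Lemma~\ref{crocodile} rather than by the present semi-concavity argument.
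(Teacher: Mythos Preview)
Your argument is correct. It differs from the paper's in how the linear part is eliminated from the singular integral: the paper subtracts the affine approximation $\ell(y)=\psi(x)+\Sigma(x)\cdot(y-x)$ and uses that $P.V.\int_{B_\rho(x)}\phi_p(\ell(y)-\ell(x))/|x-y|^{n+sp}\,dy=0$ by odd symmetry, then applies the mean value theorem to $\phi_p(\psi(y)-\psi(x))-\phi_p(\ell(y)-\ell(x))$, producing an intermediate point $\xi$ between $\psi(y)-\psi(x)$ and $\Sigma(x)\cdot(y-x)$, whence the bound $|\xi|\le\max\{|\Sigma(x)|,\widehat K\}|x-y|$ that explains the appearance of $\max\{|\Sigma(x)|,\widehat K\}$ in the statement. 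Your symmetrization $z\leftrightarrow -z$ achieves the same cancellation more directly and places $\xi$ between $a$ and $-b$, so only $\widehat K$ enters; you then artificially enlarge $\widehat K^{p-2}$ to $\max\{|\Sigma(x)|,\widehat K\}^{p-2}$ to match the stated bound. In short, both routes use the same ingredients (odd symmetry, the one-sided second-order bound, and the mean value theorem for $\phi_p$), but yours is slightly more elementary and in fact yields a marginally sharper constant, while the paper's formulation makes explicit why $\Sigma(x)$ appears in the final estimate.
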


\begin{proof}
We define the affine approximation of~$\psi$ at~$x$ as 
\begin{equation*}
\ell(y):=\psi(x)+\Sigma(x)\cdot(y-x).
\end{equation*}
Then, denoting~$\phi_p(t):=t|t|^{p-2}$, we obtain that
\begin{equation*}
P.V.\int_{B_{\rho}(x)} \frac{\phi_p\left(\ell(y)-\ell(x)\right)}{\left|x-y\right|^{n+sp}}\,dy=0.
\end{equation*} 
Consequently,
\begin{equation*}
\begin{split}
&\int_{\R^n}\frac{\phi_p\left(\psi(y)-\psi(x)\right)}{\left|x-y\right|^{n+sp}}\,dy\\
=&\int_{B_{\rho}(x)}\frac{\phi_p\left(\psi(y)-\psi(x)\right)}{\left|x-y\right|^{n+sp}}\,dy+\int_{\R^n\setminus B_{\rho}(x)}\frac{\phi_p\left(\psi(y)-\psi(x)\right)}{\left|x-y\right|^{n+sp}}\,dy\\
=&\int_{B_{\rho}(x)}\frac{\phi_p\left(\psi(y)-\psi(x)\right)-\phi_p\left(\ell(y)-\ell(x)\right)}{\left|x-y\right|^{n+sp}}\,dy+\int_{\R^n\setminus B_{\rho}(x)}\frac{\phi_p\left(\psi(y)-\psi(x)\right)}{\left|x-y\right|^{n+sp}}\,dy\\
=&:I+I\!I.
\end{split}
\end{equation*}

To estimate~$I$, for every~$y\in B_{\rho}(x)$ we denote by
\begin{equation}\label{def-xi}
m:=\min\left\lbrace \psi(y)-\psi(x), \ell(y)-\ell(x) \right\rbrace\quad\mbox{and}\quad M:=\max\left\lbrace  \psi(y)-\psi(x), \ell(y)-\ell(x) \right\rbrace.
\end{equation}
Then, thanks to the Mean Value Theorem and~\eqref{condo-87}, we obtain that there exists some~$\xi \in [m,M]$ such that 
\begin{equation}\label{kiop76}
\begin{split}
\phi_{p}\left(\psi(y)-\psi(x)\right)-\phi_p\left(\ell(y)-\ell(x)\right)&\;=\phi_{p}'(\xi)\big(\psi(y)-\psi(x)-\ell(y)+\ell(x)\big)\\
&\;=\phi_p'(\xi) \left(\psi(y)-\ell(y)\right)\\
&\;=\phi_p'(\xi)\big( \psi(y)-\psi(x)-\Sigma(x)\cdot (y-x)\big)\\
&\;=(p-1)\left|\xi\right|^{p-2}\big(\psi(y)-\psi(x)-\Sigma(x)\cdot (y-x)\big)\\
&\;\leq \widetilde{C}(p-1)\left|\xi\right|^{p-2}\left|x-y\right|^{2}.
\end{split}
\end{equation}
Also, according to~\eqref{condo-87} and~\eqref{def-xi},
\begin{equation*}
\left|\xi\right|\leq \max\left\lbrace\left|\Sigma(x)\right|, \widehat{K} \right\rbrace \left|x-y\right|, 
\end{equation*}
and employing~\eqref{kiop76} we obtain that 
\begin{equation*}
\phi_{p}\left(\psi(y)-\psi(x)\right)-\phi_p\left(\ell(y)-\ell(x)\right)\leq \widetilde{C}(p-1)\max\left\lbrace\left|\Sigma(x)\right|, \widehat{K} \right\rbrace^{p-2}\left|x-y \right|^p.
\end{equation*}
On this account, we have that
\begin{equation*}
\begin{split}
I &\;\leq \widetilde{C}(p-1)\max\left\lbrace\left|\Sigma(x)\right|, \widehat{K} \right\rbrace^{p-2}\int_{B_{\rho}(x)} \frac{dy}{\left|x-y\right|^{n+sp-p}}\\
&\;=\widetilde{C}(p-1)\max\left\lbrace\left|\Sigma(x)\right|, \widehat{K} \right\rbrace^{p-2} \frac{\omega_{n-1}}{p-sp}\rho^{-sp+p}.
\end{split}
\end{equation*}

We now estimate~$I\!I$, as follows 
\begin{equation*}
\begin{split}
I\!I &\leq 2^{p-2}\left\|\psi\right\|_{L^\infty(\R^n)}^{p-1} \int_{\R^n} \frac{dy}{\left|x-y\right|^{n+sp}} =2^{p-2}\left\|\psi\right\|_{L^\infty(\R^n)}^{p-1}\frac{\omega_{n-1}}{sp}\rho^{-sp}.
\end{split}
\end{equation*}
We thus conclude that 
\begin{equation*}
\begin{split}&\!\!\!
-(-\Delta)_p^s \psi(x) \\&\leq \widetilde{C}(p-1)\max\left\lbrace\left|\Sigma(x)\right|, \widehat{K} \right\rbrace^{p-2} \frac{\omega_{n-1}}{p-sp}\rho^{-sp+p}+2^{p-1}\left\|\psi\right\|_{L^\infty(\R^n)}^{p-1}\frac{\omega_{n-1}}{sp}\rho^{-sp}\\
&=\widetilde{C}s(p-1)\max\left\lbrace\left|\Sigma(x)\right|, \widehat{K} \right\rbrace^{p-2} \frac{\omega_{n-1}}{sp(1-s)}\rho^{-sp+p}+2^{p-1}(1-s)\left\|\psi\right\|_{L^\infty(\R^n)}^{p-1}\frac{\omega_{n-1}}{sp(1-s)}\rho^{-sp}\\
&\leq\frac{c_{n,p}}{s(1-s)}\left(\widetilde{C}\max\left\lbrace\left|\Sigma(x)\right|, \widehat{K} \right\rbrace^{p-2} \rho^{-sp+p}+\left\|\psi\right\|_{L^\infty(\R^n)}^{p-1}\rho^{-sp}\right),
\end{split}
\end{equation*} as desired.
\end{proof}

We now focus on the specific construction of the barrier in Theorem~\ref{lem:1}.
We recall the setting in~\eqref{deffk}, \eqref{fuhweoiu34ii43ty43ugijkebjwk7684903}
and~\eqref{vdefinition} and we show the following:

\begin{prop}\label{corihenry5t3ed}
Let~$s\in(0,1)$ and~$p\in(1,+\infty)$. Let~$k$ and~$f$ be respectively as  in~\eqref{deffk} and~\eqref{fuhweoiu34ii43ty43ugijkebjwk7684903}.

Then, for every~$i\in \left\lbrace 0,\dots, k+1\right\rbrace$ there exists~$\widetilde{C}_{q,s,k,i}\in(0,+\infty)$ such that, if~$\rho_0\in \left(0,\frac{r}{2}\right)$ and~$t\in \left(\frac{r}{2},r-\rho_0\right)$,
\begin{equation}\label{stimederivate}
\widetilde{C}_{q,s,k,i}\left(\frac{r}{2}\right)^{-qs-k-1}\left(t-\frac{r}{2}\right)^{k+1-i} \leq f^{(i)}(t)\leq \widetilde{C}_{q,s,k,i}\,\rho_0^{-qs-k-1}\left(t-\frac{r}{2}\right)^{k+1-i}.
\end{equation}

Also, for every~$i\in \left\lbrace 0,\dots, k+1 \right\rbrace$ there exists~$C_{i,q,s}\in(0,+\infty)$ such that, if~$x\in \left(\frac{r}{2},r\right)$ and~$\rho\in \left(0,x-\frac{r}{2}\right)$,
\begin{equation}\label{ewlop545454}
\widetilde{C}_{q,s,k,i}(r-x+\rho)^{-qs-k-1}\rho^{k+1-i} \leq f^{(i)}(x)\leq C_{i,q,s} (r-x)^{-qs-i}. 
\end{equation}
\end{prop}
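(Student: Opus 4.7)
The crucial observation is that the function $f$ defined in \eqref{fuhweoiu34ii43ty43ugijkebjwk7684903} equals $g(r-\cdot)$ minus its Taylor polynomial of order $k$ about $r/2$, since the summation in its definition is precisely $-1$ times that polynomial. Consequently $f^{(i)}(r/2)=0$ for every $i\in\{0,\dots,k\}$, while differentiating $g(u)=u^{-qs}$ directly yields
\begin{equation}\label{akm-fk1-plan}
f^{(k+1)}(t)=A_{k,q,s}(r-t)^{-qs-k-1},\qquad A_{k,q,s}:=\prod_{j=0}^{k}(qs+j),
\end{equation}
which is positive on $(r/2,r)$. Integrating \eqref{akm-fk1-plan} repeatedly against the Cauchy kernel, and using that all intermediate boundary values at $r/2$ vanish, produces the representation
\begin{equation}\label{akm-irep-plan}
f^{(i)}(t)=\frac{A_{k,q,s}}{(k-i)!}\int_{r/2}^{t}(t-\tau)^{k-i}(r-\tau)^{-qs-k-1}\,d\tau
\end{equation}
for every $i\in\{0,\dots,k\}$; the cases $i=k+1$ in both \eqref{stimederivate} and \eqref{ewlop545454} will always follow directly from \eqref{akm-fk1-plan}.

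For \eqref{stimederivate} with $i\le k$, simply observe that for $t\in(r/2,r-\rho_0)$ and $\tau\in(r/2,t)$ the weight $(r-\tau)^{-qs-k-1}$ is sandwiched between $(r/2)^{-qs-k-1}$ and $\rho_0^{-qs-k-1}$; pulling this factor out of \eqref{akm-irep-plan} leaves the elementary integral $(t-r/2)^{k+1-i}/(k+1-i)!$, and both the upper and the lower bound follow at once with $\widetilde{C}_{q,s,k,i}=A_{k,q,s}/(k+1-i)!$. The lower bound in \eqref{ewlop545454} is similarly straightforward: restricting \eqref{akm-irep-plan} to $\tau\in(x-\rho,x)\subset(r/2,x)$ (legal since $\rho<x-r/2$) one has $r-\tau\le r-x+\rho$ there, and the remaining integral of $(x-\tau)^{k-i}/(k-i)!$ equals $\rho^{k+1-i}/(k+1-i)!$.

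The main obstacle is the upper bound in \eqref{ewlop545454}, where the singular weight $(r-\tau)^{-qs-k-1}$ must be controlled uniformly as $r-x\to 0^+$; here the Taylor-remainder structure cannot be exploited by simple monotonicity. I handle it via the rescaling $\tau=r-(r-x)v$, which turns \eqref{akm-irep-plan} into
\[
f^{(i)}(x)=\frac{A_{k,q,s}}{(k-i)!}(r-x)^{-qs-i}\int_{1}^{r/(2(r-x))}(v-1)^{k-i}v^{-qs-k-1}\,dv.
\]
Since $(v-1)^{k-i}\le v^{k-i}$, the integrand is dominated by the integrable function $v^{-qs-i-1}$ on $(1,+\infty)$ (integrability uses only $qs+i>0$), so the remaining integral is uniformly bounded by $1/(qs+i)$. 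This yields the desired estimate with $C_{i,q,s}=A_{k,q,s}/[(k-i)!\,(qs+i)]$, completing the proof.
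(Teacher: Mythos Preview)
Your argument is correct. The Cauchy integral remainder representation \eqref{akm-irep-plan} is exactly the clean way to encode the iterated integrations the paper carries out step by step, and your treatment of \eqref{stimederivate} and of the lower bound in \eqref{ewlop545454} coincides with the paper's (the constant $\widetilde{C}_{q,s,k,i}=A_{k,q,s}/(k+1-i)!$ is the same one the paper eventually writes down).

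The only genuine difference is in the upper bound of \eqref{ewlop545454}. You keep working with the integral representation, rescale $\tau=r-(r-x)v$, and dominate the resulting integral by $\int_1^\infty v^{-qs-i-1}\,dv=1/(qs+i)$. The paper instead differentiates $f$ termwise and observes that every term coming from the Taylor polynomial of $g(r-\cdot)$ contributes with a nonpositive sign, so that directly
\[
f^{(i)}(x)\le(-1)^{i}g^{(i)}(r-x)=\Bigl(\prod_{j=0}^{i-1}(qs+j)\Bigr)(r-x)^{-qs-i}.
\]
This sign argument is shorter and, notably, produces a constant that depends only on $i,q,s$, matching the label $C_{i,q,s}$ in the statement; your constant $A_{k,q,s}/[(k-i)!\,(qs+i)]$ also depends on $k$. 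That is harmless for the estimate itself, but if you want the constant to live up to its subscript you can simply quote the sign observation instead. Conversely, your rescaling approach is more robust: it would still work if the correction terms did not all have a definite sign, whereas the paper's shortcut relies precisely on that.
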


\begin{proof}
In order to prove~\eqref{stimederivate} and~\eqref{ewlop545454}, we notice that,
owing to~\eqref{fuhweoiu34ii43ty43ugijkebjwk7684903},
\begin{equation}\label{colevfgbrte5543}
\begin{dcases}
f^{(i)}\left(\frac{r}{2}\right)=0\quad &\mbox{for all}\quad i \in \left\lbrace 0, 1,\dots,k \right\rbrace\\
f^{(k+1)}(\eta)=(-1)^{(k+1)}g^{(k+1)}(r-\eta)\quad &\mbox{for all}\quad \eta\in \left(\frac{r}{2},r\right]. 
\end{dcases}
\end{equation}
Furthermore, for every~$\eta\in (0,+\infty)$ we have that
\begin{equation*}
g^{(k+1)}(\eta)=(-1)^{k+1}\prod_{i=0}^{k}(qs+i) \eta^{-qs-k-1}.
\end{equation*}
{F}rom this, it follows that, given~$0<\rho_0<\rho_1$, for all~$\eta\in \left(\rho_0, \rho_1\right)$,
\begin{equation}\label{Glopsdfert}
C_{q,s,k} \rho_1^{-qs-k-1}\leq (-1)^{(k+1)}g^{(k+1)}(\eta)\leq C_{q,s,k}\rho_0^{-qs-k-1}
\end{equation}
where
\begin{equation}\label{trihytg}
C_{q,s,k}:=\prod_{i=0}^{k} (qs+i).\end{equation}

Gathering these pieces of information, we deduce that, for every~$t\in \left(\frac{r}{2},r-\rho_0\right)$,
\begin{equation*}
C_{q,s,k} \left(\frac{r}{2}\right)^{-qs-k-1}  \leq   f^{(k+1)}(t)\leq C_{q,s,k}\rho_0^{-qs-k-1}.
\end{equation*}
{F}rom this we obtain that,
for every~$i\in \left\lbrace 0,\dots, k\right\rbrace$ and~$y\in \left(\frac{r}{2},r-\rho_0\right)$,
\begin{equation*}
\begin{split}
f^{(i)}(y)&=\int_{\frac{r}{2}}^y f^{(i+1)}(t_1)\,dt_1\\
&=\int_{\frac{r}{2}}^y\int_{\frac{r}{2}}^{t_1} f^{(i+2)}(t_2)\,dt_2\,dt_1\\
&=\int_{\frac{r}{2}}^y\int_{\frac{r}{2}}^{t_1} \dots \int_{\frac{r}{2}}^{t_{k-i}} f^{(k+1)}(t_{k+1-i})\,dt_{k+1-i}\dots dt_2\,dt_1\\
&\geq \widetilde{C}_{q,s,k,i}  \left(\frac{r}{2}\right)^{-qs-k-1}\left(y-\frac{r}{2}\right)^{k+1-i},
\end{split}
\end{equation*}
where 
\begin{equation}\label{le-bambooo}
\widetilde{C}_{q,s,k,i}:=\frac{C_{q,s,k}}{(k+1-i)!}.\end{equation}

Similarly, one can show that for every~$i\in \left\lbrace 0,\dots, k \right\rbrace$ and~$y\in \left(\frac{r}{2},r-\rho_0\right)$
\begin{equation*}
f^{(i)}(y)\leq \widetilde{C}_{q,s,k,i}\,\rho_0^{-qs-k-1}\left(y-\frac{r}{2}\right)^{k+1-i}.
\end{equation*}
This concludes the proof of~\eqref{stimederivate}.

Now we prove~\eqref{ewlop545454}.
To do so, we make use of~\eqref{colevfgbrte5543} and~\eqref{Glopsdfert} to see that, for every~$x\in \left(\frac{r}{2},r\right)$ and~$\rho\in \left(0,x-\frac{r}{2}\right)$, 
\begin{equation*}
C_{q,s,k}(r-x+\rho)^{-qs-k-1} \leq f^{(k+1)}(x)\leq C_{q,s,k}\left(r-x\right)^{-qs-k-1}. 
\end{equation*}
Moreover, we point out that~$f^{(i)}(x-\rho)\ge0$ for all~$i\in \left\lbrace 0,\dots, k \right\rbrace$.
Therefore, for every~$i\in \left\lbrace 0,\dots, k \right\rbrace$,
\begin{equation*}
\begin{split}
f^{(i)}(x)
&\ge\int_{x-\rho}^x f^{(i+1)}(t_1)\,dt_1\\ 
&\ge \int_{x-\rho}^x\int_{x-\rho}^{t_1} f^{(i+2)}(t_2)\,dt_2\,dt_1\\
&\ge
\int_{x-\rho}^x\int_{x-\rho}^{t_1}\cdots \int_{x-\rho}^{t_{k-i}} 
f^{(k+1)}(r-t_{k-i+1})\,dt_{k-i+1}\dots dt_2\,dt_1)\\
&\geq\widetilde{C}_{q,s,k,i} \left(r-x+\rho\right)^{-qs-k-1}\rho^{k+1-i}.
\end{split}
\end{equation*}
This gives the lower bound in~\eqref{ewlop545454}.

Additionally, recalling~\eqref{fuhweoiu34ii43ty43ugijkebjwk7684903}, we find that, for every~$i\in \left\lbrace 0, 1,\dots,k \right\rbrace$ and~$x\in \left(\frac{r}{2},r\right)$,
\begin{equation*}
\begin{split}
f^{(i)}(x)&=(-1)^{i}g^{(i)}(r-x)-(-1)^i g^{(i)}\left(\frac{r}{2}\right)+\sum_{j=i+1}^{k}\frac{(-1)^{j+1}}{(j-2)!}g^{(j)}\left(\frac{r}{2}\right)\left(x-\frac{r}{2}\right)^{j-2}\\ 
&\leq (-1)^{i}g^{(i)}(r-x)\\
&=\prod_{j=0}^{i-1}(qs+j)(r-x)^{-qs-i}.
\end{split}
\end{equation*}
Thsi gives the upper bound in~\eqref{ewlop545454} with
\begin{equation}\label{clero-123}
C_{i,q,s}:=\prod_{j=0}^{i-1}(qs+j),
\end{equation} as desired.
\end{proof}

\begin{rem}
{\rm We notice that if~$p\in (1,2)$, $s\in \left[\frac{p-1}{2p},1\right)$ and~$q:=\frac{p}{m-1}$ it follows from~\eqref{deffk} that
\begin{equation*}
1\leq k\leq \left[\frac{p}{p-1}\right]+1.
\end{equation*}
Hence, if we recall~\eqref{trihytg} and we define 
\begin{equation}
\label{CPM} \qquad C_{p,m}:=
\prod_{i=0}^{\left[\frac{p}{p-1}\right]+1}\left(\frac{p}{m-1}+i\right),
\end{equation}
we see that 
\begin{equation}\label{hygvredf}
\frac{p-1}{2(m-1)}\left(\frac{p-1}{2(m-1)}+1\right)\leq C_{q,s,k}\leq C_{p,m}.
\end{equation}
Thus, recalling also~\eqref{le-bambooo},
for every~$i\in \left\lbrace 0,\dots, k+1 \right\rbrace$, we evince that 
\begin{equation*}
c_{p,m}:=\frac{\displaystyle\frac{p-1}{2(m-1)}\left(\frac{p-1}{2(m-1)}+1\right)}{\displaystyle \left(\left[\frac{p}{p-1}\right]+2\right)!} \leq \widetilde{C}_{q,s,k,i}\leq C_{p,m}.
\end{equation*}
Hence, using also~\eqref{stimederivate}, we deduce that for every~$i\in \left\lbrace 0,\dots, k+1\right\rbrace$, $\rho_0\in \left(0,\frac{r}{2}\right)$ and~$t\in \left(\frac{r}{2},r-\rho_0\right)$ 
\begin{equation}\label{yllwdcfere}
c_{p,m}\left(\frac{r}{2}\right)^{-qs-k-1}\left(t-\frac{r}{2}\right)^{k+1-i} \leq f^{(i)}(t)\leq C_{p,m}\,\rho_0^{-qs-k-1}\left(t-\frac{r}{2}\right)^{k+1-i}. 
\end{equation}
Now we notice that, by~\eqref{clero-123} and~\eqref{hygvredf},
\begin{equation}\label{hre-oper}
C_{i,q,s}\leq C_{q,s,k}\leq C_{p,m}.
\end{equation}We infer from this and~\eqref{ewlop545454} that for every~$x\in \left(\frac{2}{3}r,r\right)$, $\rho:=\frac{r-x}{2}$ and~$i\in \left\lbrace 0,\dots, k+1 \right\rbrace$
\begin{equation*}
c_{p,m}3^{-qs-k-1} \rho^{-qs-i}\leq f^{(i)}(x)\leq C_{p,m}2^{-qs-i}\rho^{-qs-i}. 
\end{equation*}
Thus, if we define the constant 
\begin{equation}\label{tildecpm}
\tilde{c}_{p,m}:=c_{p,m}\left(\frac{1}{3}\right)^{\frac{p}{m-1}+\left[\frac{p}{p-1}\right]+2},
\end{equation}
we see that, for every~$x\in \left(\frac{2}{3}r,r\right)$ and~$i\in \left\lbrace 0,\dots, k+1 \right\rbrace$,
\begin{equation}\label{p124353}
\tilde{c}_{p,m}\rho^{-qs-i}\leq f^{(i)}(x)\leq C_{p,m}\rho^{-qs-i},
\end{equation}where~$\rho:=\frac{r-x}{2}$.
}\end{rem}

Thanks to these observations, we now provide some auxiliary results
that allow us to estimate the fractional~$p$-Laplacian of the function~$v$
in~\eqref{vdefinition}, as stated in Proposition~\ref{lemma:BigClaim}.

\begin{lem}\label{ebgfncy654444}
Let~$p\in (1,2)$, $s\in \left[\frac{p-1}{2p},1\right)$ and~$r\in (1,+\infty)$.  Let~$v:\R^n\to [0,1]$ be as in~\eqref{vdefinition}. Let~$x\in B_r$ and
\begin{equation}\label{fy4ui32i34iuhdasjcbsabv7685498964053}
\rho:=\frac{r-\left|x\right|}{2}.
\end{equation} 

Then, there exists~$C_{n,p,m}\in(0,+\infty)$ such that  
\begin{equation}\label{frl54t4t409}
\int_{B_{\rho}(x)}\frac{\left(v(y)-v(x)\right)\left|v(y)-v(x)\right|^{p-2}}{\left|x-y\right|^{n+sp}}\,dy \leq \frac{C_{n,p,m}}{1-s} \rho^{-qs(p-1)-sp}.   
\end{equation} 
\end{lem}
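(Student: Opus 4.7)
The plan is to interpret the integral as a principal value (the integrand is not absolutely integrable near $y=x$ when $s\ge(p-1)/p$) and to exploit cancellation by subtracting off the linear part of $v$ at $x$. Using $v=h(|\cdot|)$ with $v\equiv0$ on $\{|y|\le r/2\}$ (so $\nabla v=0$ there) and the derivative bounds from Proposition~\ref{corihenry5t3ed}, I would first establish $L:=\|\nabla v\|_{L^\infty(B_\rho(x))}\le C\rho^{-qs-1}$ and, on the smooth region, $M:=\|D^2v\|_{L^\infty(B_\rho(x))}\le C\rho^{-qs-2}$. Setting $a(y):=(y-x)\cdot\nabla v(x)$ and $b(y):=v(y)-v(x)-a(y)$, Taylor's theorem gives $|b(y)|\le \tfrac{M}{2}|y-x|^2$. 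Because both $\phi_p(t):=t|t|^{p-2}$ and $a(y)$ are odd under $y\mapsto 2x-y$, the principal value of $\int_{B_\rho(x)}\phi_p(a(y))|y-x|^{-n-sp}\,dy$ vanishes on this symmetric ball, so
\[
J=\int_{B_\rho(x)}\frac{\phi_p(v(y)-v(x))-\phi_p(a(y))}{|y-x|^{n+sp}}\,dy,
\]
which I will estimate in absolute value.

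For $p\in(1,2)$ I use two sharp elementary bounds on $\phi_p$: (i) if $|b|<|a|/2$, the mean-value theorem applied to $\phi_p'(\xi)=(p-1)|\xi|^{p-2}$ yields $|\phi_p(a+b)-\phi_p(a)|\le(p-1)2^{2-p}|a|^{p-2}|b|$; (ii) in general, the $(p-1)$-Hölder continuity of $\phi_p$ gives $|\phi_p(a+b)-\phi_p(a)|\le C_p|b|^{p-1}$. Split $B_\rho(x)=\mathcal{U}\cup\mathcal{U}^c$ with $\mathcal{U}:=\{|a(y)|>2|b(y)|\}$. Choosing coordinates so that $\nabla v(x)=Le_1$ and writing $y-x=hw$ with $w\in S^{n-1}$, $\theta=\angle(w,e_1)$, one has $|a|=Lh|\cos\theta|$ and $\mathcal{U}$ amounts to $|\cos\theta|\gtrsim Mh/L\sim h/\rho$. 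On $\mathcal{U}$, apply (i) to bound the integrand by $CL^{p-2}M|\cos\theta|^{p-2}h^{2-n-sp}$; the angular integral $\int_{|\cos\theta|>h/\rho}|\cos\theta|^{p-2}\,dS(w)\le C_{n,p}/(p-1)$ is finite, and the radial integration produces the factor $\rho^{p(1-s)}/(p(1-s))$, supplying the $1/(1-s)$. Together with $L^{p-2}M=C\rho^{-qs(p-1)-p}$, the $\mathcal{U}$-contribution is $\le\frac{C_{n,p,m}}{1-s}\rho^{-qs(p-1)-sp}$. On $\mathcal{U}^c$, apply (ii) to bound the integrand by $CM^{p-1}h^{2(p-1)-n-sp}$; combined with $\int_{|\cos\theta|\le h/\rho}\,dS(w)\le C_nh/\rho$ and $\int_0^\rho h^{2p-sp-2}\,dh\le C\rho^{p(2-s)-1}/(p(2-s)-1)$ (bounded since $s<1$), the $\mathcal{U}^c$-contribution is $\le C_{n,p,m}\rho^{-qs(p-1)-sp}$. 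Summing yields the claim.

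The hardest step is handling the degenerate case where $B_\rho(x)$ intersects the gluing region $\{|y|=r/2\}$, which forces $|x|<2r/3$ and hence $\rho\ge r/6\ge 1/6$. In that regime, when $k=1$ (i.e.\ $s\in[(p-1)/(2p),(p-1)/p)$), $v$ may fail to be $C^2$ across the gluing, so the pointwise Hessian bound $M\le C\rho^{-qs-2}$ is not valid along segments crossing $\{|y|=r/2\}$; however $\rho$ is then bounded below, so the target power $\rho^{-qs(p-1)-sp}$ is not small, and one can close the argument by a direct $(p-1)$-Hölder symmetrization: write $2J=\int_{B_\rho(x)}[\phi_p(v(y)-v(x))+\phi_p(v(2x-y)-v(x))]|y-x|^{-n-sp}\,dy$, bound the symmetrized integrand by $C|v(y)+v(2x-y)-2v(x)|^{p-1}/|y-x|^{n+sp}$, and exploit the convexity of $h$ (from $f''\ge 0$ in Proposition~\ref{corihenry5t3ed}) together with the elementary identity $\bigl||y|+|2x-y|-2|x|\bigr|\le C|y-x|^2/|x|$ to bound the second difference $h(|y|)+h(|2x-y|)-2h(|x|)$ uniformly, closing the estimate.
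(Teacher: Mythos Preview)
Your overall strategy---subtract the linear part $a(y)=\nabla v(x)\cdot(y-x)$ and split $B_\rho(x)$ into $\mathcal U=\{|a|>2|b|\}$ and its complement---is different from the paper's case-by-case radial analysis, and on the region where $v$ is genuinely $C^2$ it does produce the right bound. But the handling of the nonsmooth regimes contains real errors. First, your ``degenerate case'' justification is backwards: from $\rho\ge r/6\ge 1/6$ you conclude that the target $\rho^{-qs(p-1)-sp}$ is ``not small'', but a lower bound on $\rho$ gives an \emph{upper} bound on $\rho^{-qs(p-1)-sp}$, which tends to $0$ as $r\to\infty$; a crude estimate on $J$ therefore cannot close the argument. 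Second, the inequality you invoke for the symmetrized integrand, $|\phi_p(u)+\phi_p(w)|\le C|u+w|^{p-1}$, is false for $p\in(1,2)$: with $u=1+\epsilon$ and $w=-1$ the left side is $\sim(p-1)\epsilon$ while the right side is $\epsilon^{p-1}$, and the ratio diverges as $\epsilon\to0^+$. Third, when $|x|\le r/2$ one has $\nabla v(x)=0$, so $\mathcal U=\varnothing$ and the angular smallness $|\{|\cos\theta|\le h/\rho\}|\le C_nh/\rho$ is unavailable; your $\mathcal U^c$ bound then reduces to $CM^{p-1}\!\int_0^\rho h^{2(p-1)-1-sp}\,dh$, which diverges once $s\ge 2-2/p$. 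Finally, you never treat the gluing at $|y|=\hat r$, where $v$ fails even to be $C^1$, so the Taylor remainder bound $|b|\le \tfrac{M}{2}|y-x|^2$ breaks down there.

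These gaps can be repaired, but not by the arguments you sketch. When $k=1$ one actually has $s<(p-1)/p$, so the integrand is absolutely integrable and the bare Lipschitz bound $L\le C\rho^{-qs-1}$ already yields $|J|\le C L^{p-1}\rho^{p-1-sp}/(p-1-sp)=C\rho^{-qs(p-1)-sp}$ with no symmetrization needed. When $|x|\le r/2$ one should use the $(k+1)$-th order vanishing $v(y)\le C\,r^{-qs-k-1}(|y|-r/2)_+^{k+1}$ from~\eqref{yllwdcfere} in place of the quadratic bound; together with $(k+1)(p-1)-sp\ge(1-s)(p-1)$ from~\eqref{oler-0988776} this makes the radial integral converge with the correct $1/(1-s)$ factor. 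And when $B_\rho(x)$ crosses $\{|y|=\hat r\}$, Proposition~\ref{telviv} forces $\rho<\widehat C_{p,m}$, so here the target genuinely is bounded below and a direct estimate suffices. The paper avoids all of these complications by never appealing to a global $C^2$ bound: it treats the three regimes $|x|\le r/2$, $r/2<|x|<\hat r$, and $|x|\ge\hat r$ separately, and in the middle regime it symmetrizes over the half-ball $B_\rho^+$ and decomposes further using only the one-variable bounds on $f,f',f''$ from Proposition~\ref{corihenry5t3ed}.
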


\begin{proof}
In what follows, for simplicity of notation, if~$A$, $B\in(0,+\infty)$ we adopt the notation
\begin{equation*}
A \lesssim B \quad\mbox{if}\quad A \leq C_{n,p,m} B 
\end{equation*} 
for some~$C_{n,p,m}\in(0,+\infty)$ depending at most on~$n$, $p$ and~$m$. 

Moreover, for every~$p\in (1,2)$, $s\in \left[\frac{p-1}{2p},1\right)$ 
and~$k$ as in~\eqref{deffk}, we have that
\begin{equation}\label{oler-0988776}
(k+1)(p-1)-sp \geq (1-s)(p-1).  
\end{equation}
Indeed,
\begin{eqnarray*}&&
(k+1)(p-1)-sp -(1-s)(p-1) = k(p-1)-s\\&&\qquad=
\left(\left[\frac{sp}{p-1}\right]+1\right)(p-1) -s
\geq \frac{sp}{p-1}(p-1) -s=s(p-1)\geq 0,
\end{eqnarray*} as desired.

Now, we complete the proof of~\eqref{frl54t4t409}, which is based on a long and tedious computation, that we provide here below for the facility of the reader.
We will deal separately with the cases~$x\in \overline{B}_{\frac{r}{2}}$, $x\in B_{\hat{r}}\setminus \overline{B}_{\frac{r}{2}}$ and~$x\in B_{r}\setminus B_{\hat{r}}$, where~$\hat{r}$ has been defined in~\eqref{g874rfegt56}. 
\medskip

\noindent{\em{ {\bf{(i) Case $x\in \overline{B}_{\frac{r}{2}}$.}}}}
In this case, for every~$y\in B_{\rho}(x)$ it holds that  
\begin{equation*}
\left|y\right|\leq \rho+\left|x\right|=\frac{r-\left|x\right|}{2}+\left|x\right|= \frac{r+\left|x\right|}{2}\leq \frac{3}{4}r.
\end{equation*}
Hence, recalling~\eqref{vdefinition}, if~$y\in B_{\rho}(x)$,
\begin{eqnarray*}
&&\left(v(y)-v(x)\right)\left|v(y)-v(x)\right|^{p-2}
=\left(h(|y|)-h(|x|)\right)\left|h(|y|)-h(|x|)\right|^{p-2}\\
&&\qquad\qquad =h(|y|)|h(|y|)|^{p-2}\le |f(|y|)|^{p-1}\chi_{\R^n\setminus B_\frac{r}{2}}(y).
\end{eqnarray*}
As a consequence,
\begin{equation*}
\int_{B_{\rho}(x)}\frac{\left(v(y)-v(x)\right)\left|v(y)-v(x)\right|^{p-2}}{\left|x-y\right|^{n+sp}}\,dy \leq  \int_{B_{\rho}(x)\setminus B_\frac{r}{2}}  \frac{|f(|y|)|^{p-1}}{\left|x-y\right|^{n+sp}}\,dy.
\end{equation*}

We now use~\eqref{yllwdcfere} with~$\rho_0:=r/4$ and~$i:=0$ and we 
obtain that
\begin{equation*}\begin{split}&
\int_{B_{\rho}(x)}\frac{\left(v(y)-v(x)\right)\left|v(y)-v(x)\right|^{p-2}}{\left|x-y\right|^{n+sp}}\,dy\\ \leq\;& 
C_{p,m}^{p-1}\left(\frac{r}{4}\right)^{-(qs+k+1)(p-1)} \int_{B_{\rho}(x)\setminus B_\frac{r}{2}}  \frac{\left(\left|y\right|-\frac{r}{2}\right)^{(p-1)(k+1)}}{\left|x-y\right|^{n+sp}}\,dy.
\end{split}
\end{equation*}
Thus, changing variable gives that
\begin{equation*}\begin{split}&
\int_{B_{\rho}(x)}\frac{\left(v(y)-v(x)\right)\left|v(y)-v(x)\right|^{p-2}}{\left|x-y\right|^{n+sp}}\,dy\\ 
\le \;&
C_{p,m}^{p-1}\left(\frac{r}{4}\right)^{-(qs+k+1)(p-1)} \int_{
B_{\rho}\setminus B_\frac{r}{2}(-x)}  \frac{(\left|y+x\right|-\frac{r}{2})^{(k+1)(p-1)}}{\left|y\right|^{n+sp}}\,dy\\
\leq\;&C_{p,m}^{p-1}\left(\frac{r}{4}\right)^{-(qs+k+1)(p-1)}\int_{B_{\rho}\setminus B_{\frac{r}{2}}(-x)} \frac{(\left|y\right|+\left|x\right|-\frac{r}{2})^{(k+1)(p-1)}}{\left|y\right|^{n+sp}}\,dy
.\end{split}
\end{equation*}
Since~$|x|\le r/2$, from this we find that
\begin{equation}\label{758sflkashfkfhklq76543mknoknjbhiuj}\begin{split}&
\int_{B_{\rho}(x)}\frac{\left(v(y)-v(x)\right)\left|v(y)-v(x)\right|^{p-2}}{\left|x-y\right|^{n+sp}}\,dy\\ 
\le \;&C_{p,m}^{p-1}\left(\frac{r}{4}\right)^{-(qs+k+1)(p-1)} \int_{B_{\rho}} \frac{\left|y\right|^{(k+1)(p-1)}}{\left|y\right|^{n+sp}}\,dy\\
=\;&
\frac{C_{p,m}^{p-1}\,\omega_{n-1}}{(k+1)(p-1)-sp} \left(\frac{r}{4}\right)^{-(qs+k+1)(p-1)}\rho^{(k+1)(p-1)-sp}
.\end{split}
\end{equation}
Also, in light of~\eqref{fy4ui32i34iuhdasjcbsabv7685498964053}
we have that~$\rho\le r/2$, and therefore
\begin{eqnarray*}
r^{-(qs+k+1)(p-1)}\rho^{(k+1)(p-1)-sp}\le
\left(2{\rho}\right)^{-(qs+k+1)(p-1)}\rho^{(k+1)(p-1)-sp}
=2^{-(qs+k+1)(p-1)}\rho^{-qs(p-1)-sp }.
\end{eqnarray*}
Using this into~\eqref{758sflkashfkfhklq76543mknoknjbhiuj}
and recalling~\eqref{oler-0988776},
we see that
\begin{equation*}\begin{split}
\int_{B_{\rho}(x)}\frac{\left(v(y)-v(x)\right)\left|v(y)-v(x)\right|^{p-2}}{\left|x-y\right|^{n+sp}}\,dy
\leq\;& 
\frac{C_{p,m}^{p-1}\,\omega_{n-1}2^{(qs+k+1)(p-1)}
}{(k+1)(p-1)-sp}\rho^{-qs(p-1)-sp }\\ \leq\;&
\frac{C_{p,m}^{p-1}\,\omega_{n-1}2^{(qs+k+1)(p-1)}
}{(1-s)(p-1)}\rho^{-qs(p-1)-sp }
,\end{split}
\end{equation*}
which gives the desired estimate in~\eqref{frl54t4t409}.
\medskip

\noindent{\em{ {\bf{(ii) Case $x\in B_{\hat{r}}\setminus \overline{B}_{\frac{r}{2}}$.}}}}
If we denote by
\begin{eqnarray*}
B_\rho^{+}&:=&\big\{ y\in B_{\rho}\mbox{  s.t.  }\, y\cdot x\geq 0  \big\} \\
{\mbox{and }}\qquad B_\rho^{-}&:=&\big\{ y\in B_{\rho}\mbox{  s.t.  }\, y\cdot x< 0  \big\} ,
\end{eqnarray*} 
we have that
\begin{equation}\label{mnbvc09876hlgiffahj}
{\mbox{$|y+x|\geq |x|$
for every~$y\in B_{\rho}^{+}$.}}\end{equation}

Now, we write
\begin{equation*}
\begin{split}
&\int_{B_{\rho}}\frac{(v(y+x)-v(x)) |v(y+x)-v(x)|^{p-2}}{|y|^{n+sp}}\,dy\\
=\;& \frac{1}{2}\int_{B_\rho} \frac{(v(y+x)-v(x)) |v(y+x)-v(x)|^{p-2}+
(v(x-y)-v(x)) |v(x-y)-v(x)|^{p-2}}{|y|^{n+sp}}\,dy\\
=\;& \frac{1}{2}\int_{B_\rho^+} \frac{(v(y+x)-v(x)) |v(y+x)-v(x)|^{p-2}+
(v(x-y)-v(x)) |v(x-y)-v(x)|^{p-2}}{|y|^{n+sp}}\,dy\\
&\quad+
\frac{1}{2}\int_{B_\rho^-} \frac{(v(y+x)-v(x)) |v(y+x)-v(x)|^{p-2}+
(v(x-y)-v(x)) |v(x-y)-v(x)|^{p-2}}{|y|^{n+sp}}\,dy.
\end{split}\end{equation*}
Since~$v$ is a radial function, changing variable in the last integral, we find that
\begin{eqnarray*}
&&\int_{B_\rho^-} \frac{(v(y+x)-v(x)) |v(y+x)-v(x)|^{p-2}+
(v(x-y)-v(x)) |v(x-y)-v(x)|^{p-2}}{|y|^{n+sp}}\,dy\\
&=&
\int_{B_\rho^+} \frac{(v(y+x)-v(x)) |v(y+x)-v(x)|^{p-2}+
(v(x-y)-v(x)) |v(x-y)-v(x)|^{p-2}}{|y|^{n+sp}}\,dy, \end{eqnarray*}
and therefore
\begin{equation*}
\begin{split}
&\int_{B_{\rho}}\frac{(v(y+x)-v(x)) |v(y+x)-v(x)|^{p-2}}{|y|^{n+sp}}\,dy\\
=\;&
\int_{B_\rho^+} \frac{(v(y+x)-v(x)) |v(y+x)-v(x)|^{p-2}+
(v(x-y)-v(x)) |v(x-y)-v(x)|^{p-2}}{|y|^{n+sp}}\,dy\\
=\;&\int_{B_{\rho}^{+}} \frac{\left(h(|y+x|)-h(|x|)\right)\left|h(|y+x|)-h(|x|)\right|^{p-2}+\left(h(|x-y|)-h(|x|)\right)\left|h(|x-y|)-h(|x|)\right|^{p-2}}{|y|^{n+sp}}\,dy.
\end{split}\end{equation*}
As a result, thanks to~\eqref{mnbvc09876hlgiffahj}
and the fact that~$h$ is a non decreasing function, 
\begin{equation}\label{decompos43fr60912}
\begin{split}
&\int_{B_{\rho}}\frac{\left(v(y+x)-v(x)\right)\left|v(y+x)-v(x)\right|^{p-2}}{\left|y\right|^{n+sp}}\,dy\\
=& \int_{B_{\rho}^{+}\cap B_{\left|x\right|}(x)\cap B_{\frac{r}{2}}^c(x)} \frac{\left|h(\left|y+x\right|)-h(\left|x\right|)\right|^{p-1}-\left|h(\left|x\right|)-h(\left|x-y\right|)\right|^{p-1}}{\left|y\right|^{n+sp}}\,dy\\
&+\int_{B_{\rho}^{+}\cap B_{\left|x\right|}(x)\cap B_{\frac{r}{2}}(x)} \frac{\left|h(\left|y+x\right|)-h(\left|x\right|)\right|^{p-1}-\left|h(\left|x\right|)-h(\left|x-y\right|)\right|^{p-1}}{\left|y\right|^{n+sp}}\,dy\\
&+ \int_{B_{\rho}^{+}\cap B_{\left|x\right|}^c(x)} \frac{\left|h(\left|y+x\right|)-h(\left|x\right|)\right|^{p-1}+\left|h(\left|x-y\right|)-h(\left|x\right|)\right|^{p-1}}{\left|y\right|^{n+sp}}\,dy.
\end{split}
\end{equation}

Now, we claim that
\begin{equation}
\int_{B_{\rho}^{+}\cap B_{\left|x\right|}(x)\cap B_{\frac{r}{2}}^c(x)} \frac{\left|h(\left|y+x\right|)-h(\left|x\right|)\right|^{p-1}-\left|h(\left|x\right|)-h(\left|x-y\right|)\right|^{p-1}}{\left|y\right|^{n+sp}}\,dy\lesssim \frac{\rho^{-qs(p-1)-sp}}{1-s}\label{CL1},\end{equation}
\begin{equation}
\int_{B_{\rho}^{+}\cap B_{\left|x\right|}(x)\cap B_{\frac{r}{2}}(x)} \frac{\left|h(\left|y+x\right|)-h(\left|x\right|)\right|^{p-1}-\left|h(\left|x\right|)-h(\left|x-y\right|)\right|^{p-1}}{\left|y\right|^{n+sp}}\,dy \lesssim \frac{\rho^{-qs(p-1)-sp}}{1-s}\label{CL2}\end{equation}
and
\begin{equation}
\int_{B_{\rho}^{+}\cap B_{\left|x\right|}^c(x)} \frac{\left|h(\left|y+x\right|)-h(\left|x\right|)\right|^{p-1}+\left|h(\left|x-y\right|)-h(\left|x\right|)\right|^{p-1}}{\left|y\right|^{n+sp}}\,dy \lesssim \frac{\rho^{-qs(p-1)-sp}}{1-s}\label{CL3}.
\end{equation}
We point out that
if the claims in~\eqref{CL1}, \eqref{CL2} and~\eqref{CL3} hold true, then from~\eqref{decompos43fr60912} the estimate in~\eqref{frl54t4t409} for the case~$x\in B_{\hat{r}}\setminus \overline{B}_{\frac{r}{2}}$ readily follows. 

Hence, from now on, we focus on the proofs of the claims in~\eqref{CL1}, \eqref{CL2} and~\eqref{CL3}.
\medskip

\underline{Proof of~\eqref{CL1}}:
For this, we recall~\eqref{vdefinition} and
we use Proposition~\ref{corihenry5t3ed} to find that, for every~$y\in B_{\rho}^{+}\cap B_{\left|x\right|}(x)$,
\begin{equation}\label{22-22}
\begin{split}&
h(\left|x+y\right|)-h(\left|x\right|)\leq f(\left|x+y\right|)-f(\left|x\right|)
=\int_{\left|x\right|}^{\left|x+y\right|}f'(t)\,dt\\
&\qquad=\int_{\left|x\right|}^{\left|x+y\right|}\left(
f'(\left|x\right|)+\int_{\left|x\right|}^{t}f''(\xi)\,d\xi\right)\,dt\\
&\qquad\leq f'(\left|x\right|)\left(\left|y+x\right|-\left|x\right|\right)+ f''(\left|x+y\right|)\int_{\left|x\right|}^{\left|x+y\right|}\left(\int_{\left|x\right|}^{t}\,d\xi\right)\,dt\\
&\qquad = f'(\left|x\right|)\left(\left|y+x\right|-\left|x\right|\right)+ f''(\left|x+y\right|)\frac{\left(\left|x+y\right|-\left|x\right|\right)^2}{2}.
 \end{split}
\end{equation}
Also, for every~$y\in B_{\rho}^{+}\cap B_{\left|x\right|}(x)\cap B_{\frac{r}{2}}^c(x)$, we have that~$|x-y|\in \left(\frac{r}2,\hat{r}\right)$, and so
\begin{equation}\label{33-33}
\begin{split}
&h(\left|x\right|)-h(\left|x-y\right|)=f(\left|x\right|)-f(\left|x-y\right|)
=\int_{\left|x-y\right|}^{\left|x\right|}f'(t)\,dt\\
&\qquad=\int_{\left|x-y\right|}^{\left|x\right|}\left(
f'(\left|x\right|)-\int_{t}^{\left|x\right|} f''(\xi)\,d\xi\right)\,dt\\
&\qquad\geq f'(\left|x\right|)\left(\left|x\right|-\left|x-y\right|\right)-f''(\left|x\right|)\int_{\left|x-y\right|}^{\left|x\right|}\left(
\int_{t}^{\left|x\right|}\,d\xi\right)\,dt\\
&\qquad=f'(\left|x\right|)\left(\left|x\right|-\left|x-y\right|\right)-f''(\left|x\right|)\frac{\left(\left|x\right|-\left|x-y\right|\right)^2}{2}.
\end{split}
\end{equation}

Now, we define
\begin{equation}\label{8t43toifaf098765jmdsbkvfdkvs}
u(x):=2\frac{f'(\left|x\right|)}{f''(\left|x\right|)}
\end{equation}
and we notice that, for every~$y\in B_{u(x)}$, 
\begin{equation*}
f'(\left|x\right|)-f''(\left|x\right|)\frac{\left(\left|x\right|-\left|x-y\right|\right)}{2}\geq f'(\left|x\right|) -f''(\left|x\right|)\frac{\left|y\right|}{2}\geq 0.
\end{equation*}
{F}rom this, \eqref{22-22} and~\eqref{33-33}
we deduce that, for every~$y\in B_{\rho}^{+}\cap B_{\left|x\right|}(x)\cap B_{\frac{r}{2}}^c(x)\cap B_{u(x)}$,
\begin{equation}\label{koplierty}
\begin{split}
&\left|h(\left|y+x\right|)-h(\left|x\right|)\right|^{p-1}-\left|h(\left|x\right|)-h(\left|x-y\right|)\right|^{p-1}\\
\leq & \left(\left|x+y\right|-\left|x\right|\right)^{p-1}\left|f'(\left|x\right|)+ f''(\left|x+y\right|)\frac{\left|x+y\right|-\left|x\right|}{2}\right|^{p-1}\\
&\quad- \left(\left|x\right|-\left|x-y\right|\right)^{p-1}\left|f'(\left|x\right|)-f''(\left|x\right|)\frac{\left(\left|x\right|-\left|x-y\right|\right)}{2}\right|^{p-1}\\
=&\left(\left|x+y\right|-\left|x\right|\right)^{p-1}\\
&\quad\times\left(\left|f'(\left|x\right|)+ f''(\left|x+y\right|)\frac{\left|x+y\right|-\left|x\right|}{2}\right|^{p-1}- \left|f'(\left|x\right|)-f''(\left|x\right|)\frac{\left(\left|x\right|-\left|x-y\right|\right)}{2}\right|^{p-1}\right)\\
&+\big((|x+y|-|x|)^{p-1}-(|x|-|x-y|)^{p-1}\big)
\left|f'(\left|x\right|)-f''(\left|x\right|)\frac{\left(\left|x\right|-\left|x-y\right|\right)}{2}\right|^{p-1}\\
\leq & |y|^{p-1}\left(\left|f'(\left|x\right|)+ f''(\left|x+y\right|)\frac{\left|y\right|}{2}\right|^{p-1}- \left|f'(\left|x\right|)-f''(\left|x\right|)\frac{\left|y\right|}{2}\right|^{p-1}\right)\\
&\quad+\big((|x+y|-|x|)^{p-1}- (|x|-|x-y|)^{p-1}\big)
\left|f'(\left|x\right|)\right|^{p-1}.
\end{split}
\end{equation}
Applying Taylor's Theorem with the Lagrange remainder we obtain that
\begin{equation}\label{kwi7ebt4.kju}
\begin{split}
&\left|f'(\left|x\right|)+f''(\left|x+y\right|)\frac{\left|y\right|}{2}\right|^{p-1}\\=\;&\left|f'(\left|x\right|)\right|^{p-1}+(p-1)\left|f'(\left|x\right|)\right|^{p-2}f''(\left|x+y\right|)\frac{\left|y\right|}{2}\\&\qquad
+\frac{(p-1)(p-2)}{8}\left|f'(\left|x\right|)+b\right|^{p-3}f''(\left|x+y\right|)^2 \left|y\right|^2\\
\leq\; & \left|f'(\left|x\right|)\right|^{p-1}+(p-1)\left|f'(\left|x\right|)\right|^{p-2}f''(\left|x+y\right|)\frac{\left|y\right|}{2}\\&\qquad
+\frac{(p-1)(2-p)}{8}\left|f'(\left|x\right|)+b\right|^{p-3}(f''(\left|x+y\right|))^2 \left|y\right|^2\\
\leq \;& \left|f'(\left|x\right|)\right|^{p-1}+(p-1)\left|f'(\left|x\right|)\right|^{p-2}f''(\left|x+y\right|)\frac{\left|y\right|}{2}\\&\qquad
+\frac{(p-1)(2-p)}{8}\left|f'(\left|x\right|)\right|^{p-3}(f''(\left|x+y\right|))^2 \left|y\right|^2,
\end{split}
\end{equation}
for some~$b\in \left(0,f''(\left|x+y\right|)\frac{\left|y\right|}{2}\right)$.

Also, for every~$\xi\in \left(0,f''(\left|x\right|)\frac{\left|y\right|}{2}\right)$,
\begin{equation*}
\left|f'(\left|x\right|)-\xi\right|\geq \left|f'(\left|x\right|)\right|-\xi\geq f'(\left|x\right|)-f''(\left|x\right|)\frac{\left|y\right|}{2}\geq 0. 
\end{equation*}
Hence, thanks to Taylor's Theorem we find that, for some~$a\in \left(0,f''(\left|x\right|)\frac{\left|y\right|}{2}\right)$,
\begin{equation*}
\begin{split}
&\left|f'(\left|x\right|)-f''(\left|x\right|)\frac{\left|y\right|}{2}\right|^{p-1}\\
=\;& \left|f'(\left|x\right|)\right|^{p-1}-(p-1)\left|f'(\left|x\right|)\right|^{p-2}f''(\left|x\right|)\frac{\left|y\right|}{2}
+\frac{(p-1)(p-2)}{8}\left|f'(\left|x\right|)-a\right|^{p-3}(f''(|x|))^2 |y|^2. 
\end{split}
\end{equation*} 
{F}rom this, \eqref{koplierty} and~\eqref{kwi7ebt4.kju} we conclude that, for every~$y\in B_{\rho}^{+}\cap B_{\left|x\right|}(x)\cap B_{\frac{r}{2}}^c(x)\cap B_{u(x)}$,
\begin{equation*}
\begin{split}
&\left|h(\left|y+x\right|)-h(\left|x\right|)\right|^{p-1}-\left|h(\left|x\right|)-h(\left|x-y\right|)\right|^{p-1}\\
\leq\; & \frac{p-1}{2} \left|y\right|^{p}\left|f'(\left|x\right|)\right|^{p-2}\big(f''(|x+y|)+f''(|x|)\big)\\
&+ \frac{(p-1)(2-p)}{8}\left|y\right|^{p+1} \left(f'(\left|x\right|)^{p-3}(
f''(|x+y|))^2+\left|f'(\left|x\right|)-f''(\left|x\right|)\frac{\left|y\right|}{2}\right|^{p-3}(f''(|x|))^2\right)\\
&+\big((|x+y|-|x|)^{p-1}- (|x|-|x-y|)^{p-1}\big)\left|f'(\left|x\right|)\right|^{p-1}.
\end{split}
\end{equation*}
Therefore, we deduce that, for every~$w\in \left(0,u(x)\right)\cap (0,\left|x\right|-\frac{r}{2})\cap (0,\rho)$,
\begin{equation}\label{p3cfre}
\begin{split}
&\int_{B_{\rho}^{+}\cap B_{\left|x\right|}(x)\cap B_{\frac{r}{2}}^c(x)} \frac{\left|h(\left|y+x\right|)-h(\left|x\right|)\right|^{p-1}-\left|h(\left|x\right|)-h(\left|x-y\right|)\right|^{p-1}}{\left|y\right|^{n+sp}}\,dy\\
\leq\; & \frac{p-1}{2}\int_{B_{w}^{+}\cap B_{\left|x\right|}(x)\cap B_{\frac{r}{2}}^c(x)}\frac{\left|f'(\left|x\right|)\right|^{p-2}\left(f''(\left|x+y\right|)+f''(\left|x\right|)\right)}{\left|y\right|^{n+sp-p}}\,dy\\
&+\frac{(p-1)(2-p)}{8}\int_{B_{w}^{+}\cap B_{\left|x\right|}(x)\cap B_{\frac{r}{2}}^c(x)}\frac{f'(\left|x\right|)^{p-3}f''(\left|x+y\right|)^2+\left|f'(\left|x\right|)-f''(\left|x\right|)\frac{\left|y\right|}{2}\right|^{p-3}f''(\left|x\right|)^2}{\left|y\right|^{n+sp-p-1}}\,dy\\
&+\int_{B_{w}^{+}\cap B_{\left|x\right|}(x)\cap B_{\frac{r}{2}}^c(x)} \frac{\left(\left(\left|x+y\right|-\left|x\right|\right)^{p-1}- \left(\left|x\right|-\left|x-y\right|\right)^{p-1}\right)\left|f'(\left|x\right|)\right|^{p-1}}{\left|y\right|^{n+sp}}\,dy\\
&+\int_{\left(B_{\rho}^{+}\setminus B_{w}^{+}\right)\cap B_{\left|x\right|}(x)\cap B_{\frac{r}{2}}^c(x)} \frac{\left|h(\left|y+x\right|)-h(\left|x\right|)\right|^{p-1}-\left|h(\left|x\right|)-h(\left|x-y\right|)\right|^{p-1}}{\left|y\right|^{n+sp}}\,dy.
\end{split}
\end{equation}

Now we adopt the notation  
\begin{equation}\label{fyuedwiqryeuwiyuwdtilde}
\tilde{x}:=\left|x\right|-\frac{r}{2}.
\end{equation}
Also, we assume that~$\hat{r}\in\left(\frac{2}{3}r,+\infty\right)$, the other case being similar. We notice that
\begin{eqnarray}\label{rho-tildex.kju}
&&\tilde{x}<\rho \quad{\mbox{ if~$x\in B_{\frac{2}{3}r}\setminus \overline{B}_{\frac{r}{2}}$}}\\
{\mbox{and }} &&\tilde{x}\geq \rho\quad{\mbox{ if~$x\in B_{\hat{r}}\setminus B_{\frac{2}{3}r}$.}}
\label{jnurefv540as}
\end{eqnarray}
Moreover, we define 
\begin{equation*}
\alpha_{p,m}:=\frac{\tilde{c}_{p,m}}{C_{p,m}}
\end{equation*}
and the function
\begin{equation}\label{gallgbffer430}
w(x):=\begin{dcases}
\alpha_{p,m}\,\tilde{x}\quad &\mbox{for}\quad x \in B_{\frac{2}{3}r}\setminus \overline{B}_{\frac{r}{2}},\\
\alpha_{p,m}\,\rho  \quad &\mbox{for}\quad x \in B_{\hat{r}}\setminus B_{\frac{2}{3}r},
\end{dcases}
\end{equation}
where~$\tilde{c}_{p,m}$ and~$C_{p,m}$ are given respectively in~\eqref{tildecpm} and~\eqref{CPM}. 

We notice that~$\alpha_{p,m}\le1$, thanks to~\eqref{p124353}.
Moreover, recalling~\eqref{8t43toifaf098765jmdsbkvfdkvs} and making use
of~\eqref{p124353},
\begin{eqnarray*}
&&u(x)=2\frac{f'(|x|)}{f''(|x|)}
\ge 2\frac{ \tilde{c}_{p,m}\left(\frac{r-|x|}{2}\right)^{-qs-1}}{
C_{p,m}\left(\frac{r-|x|}{2}\right)^{-qs-2}
}=\alpha_{p,m}(r-|x|)
=\alpha_{p,m}\left(\frac{r}2-\tilde x
\right)\ge w(x).
\end{eqnarray*}
Therefore,
using also~\eqref{rho-tildex.kju} and~\eqref{jnurefv540as}, we have that, if~$x\in B_{\hat{r}}\setminus \overline{B}_{\frac{r}{2}}$,
\begin{equation}\label{papapsdfre}
w(x)\leq \min\left\lbrace u(x),\tilde{x},\rho \right\rbrace.
\end{equation} 
Accordingly, we can exploit~\eqref{p3cfre} with~$w:=w(x)$.

In this way, we deduce that, in order to show~\eqref{CL1}, it is enough to prove that, if~$x\in B_{\hat{r}}\setminus B_{\frac{r}{2}}$,
\begin{equation}
\int_{B_{w(x)}^{+}\cap B_{\left|x\right|}(x)\cap B_{\frac{r}{2}}^c(x)}\frac{\left|f'(\left|x\right|)\right|^{p-2}\left(f''(\left|x+y\right|)+f''(\left|x\right|)\right)}{\left|y\right|^{n+sp-p}}\,dy\lesssim \frac{\rho^{-qs(p-1)-sp}}{1-s},\label{iure}\end{equation}
\begin{equation}\begin{split}&
\int_{B_{w(x)}^{+}\cap B_{\left|x\right|}(x)\cap B_{\frac{r}{2}}^c(x)}\frac{f'(\left|x\right|)^{p-3}f''(\left|x+y\right|)^2+\left|f'(\left|x\right|)-f''(\left|x\right|)\frac{\left|y\right|}{2}\right|^{p-3}f''(\left|x\right|)^2}{\left|y\right|^{n+sp-p-1}}\,dy \\&\qquad\qquad\qquad\lesssim \frac{\rho^{-qs(p-1)-sp}}{1-s},\label{spoart}\end{split}\end{equation}
\begin{equation}
\int_{B_{w(x)}^{+}\cap B_{\left|x\right|}(x)\cap B_{\frac{r}{2}}^c(x)} \frac{\left(\left(\left|x+y\right|-\left|x\right|\right)^{p-1}- \left(\left|x\right|-\left|x-y\right|\right)^{p-1}\right)\left|f'(\left|x\right|)\right|^{p-1}}{\left|y\right|^{n+sp}}\,dy \lesssim \frac{\rho^{-qs(p-1)-sp}}{1-s}\label{add-76yh4}
\end{equation}
and \begin{equation}
\int_{\left(B_{\rho}^{+}\setminus B_{w(x)}^{+}\right)\cap B_{\left|x\right|}(x)\cap B_{\frac{r}{2}}^c(x)} \frac{\left|h(\left|y+x\right|)-h(\left|x\right|)\right|^{p-1}-\left|h(\left|x\right|)-h(\left|x-y\right|)\right|^{p-1}}{\left|y\right|^{n+sp}}\,dy \lesssim \frac{\rho^{-qs(p-1)-sp}}{1-s}.\label{rhy654vf}
\end{equation}
Hence, from now on we focus on the proofs of these claims.

We first prove~\eqref{iure}. To do so,
if~$x\in B_{\frac{2}{3}r}\setminus \overline{B}_{\frac{r}{2}}$, we observe that, in light of~\eqref{yllwdcfere},
\begin{eqnarray*}
&& |f'(|x|)|^{p-2}\big(f''(|x+y|)+f''(|x|)\big)
\\&\leq& c_{p,m}^{p-2}\left(\frac{r}{2}\right)^{-(qs+k+1)(p-2)}
\left(|x|-\frac{r}{2}\right)^{k(p-2)} C_{p,m}
\left(\frac{r}6\right)^{-qs-k-1}
\left(
\left(|x+y|-\frac{r}{2}\right)^{k-1}+ \left(|x|-\frac{r}{2}\right)^{k-1}\right)\\
&\leq &
c_{p,m}^{p-2} C_{p,m}\left(\frac{r}{2}\right)^{-(qs+k+1)(p-2)}
\left(\frac{r}6\right)^{-qs-k-1}
{\tilde{x}}^{k(p-2)}
\big((\tilde{x}+|y|)^{k-1}+ {\tilde{x}}^{k-1}\big).
\end{eqnarray*}
As a consequence,
\begin{equation*}
\begin{split}
&\int_{B_{w(x)}^{+}\cap B_{\left|x\right|}(x)\cap B_{\frac{r}{2}}^c(x)}\frac{\left|f'(\left|x\right|)\right|^{p-2}\left(f''(\left|x+y\right|)+f''(\left|x\right|)\right)}{\left|y\right|^{n+sp-p}}\,dy\\
\leq \;& c_{p,m}^{p-2}C_{p,m}\left(\frac{r}{2}\right)^{-(qs+k+1)(p-2)}\left(\frac{r}{6}\right)^{-qs-k-1}\int_{B_{w(x)}^{+}\cap B_{\left|x\right|}(x)\cap B_{\frac{r}{2}}^c(x)}\frac{\tilde{x}^{k(p-2)}\left((\tilde{x}+\left|y\right|)^{k-1}+\tilde{x}^{k-1}\right)}{\left|y\right|^{n+sp-p}}\,dy\\
\lesssim \;& r^{-(p-1)(qs+k+1)}\int_{B_{w(x)}}\frac{\tilde{x}^{k(p-2)}\left((\tilde{x}+\left|y\right|)^{k-1}+\tilde{x}^{k-1}\right)}{\left|y\right|^{n+sp-p}}\,dy
.\end{split}\end{equation*}
Thus, changing variable~$z:=y/\tilde{x}$, we obtain that
\begin{equation*}
\begin{split}
&\int_{B_{w(x)}^{+}\cap B_{\left|x\right|}(x)\cap B_{\frac{r}{2}}^c(x)}\frac{\left|f'(\left|x\right|)\right|^{p-2}\left(f''(\left|x+y\right|)+f''(\left|x\right|)\right)}{\left|y\right|^{n+sp-p}}\,dy\\
\lesssim \;&
r^{-(p-1)(qs+k+1)}\tilde{x}^{(k+1)(p-1)-sp}
\int_{B_{w(x)/\tilde{x}}}\frac{\left((1+|z|)^{k-1}+1\right)}{|z|^{n+sp-p}}\,dz\\
\lesssim \;&
r^{-(p-1)(qs+k+1)}\tilde{x}^{(k+1)(p-1)-sp}\int_{B_{\alpha_{p,m}}}\frac{\left((1+|z|)^{k-1}+1\right)}{|z|^{n+sp-p}}\,dz\\
\lesssim & \frac{r^{-(p-1)(qs+k+1)}}{p-sp}\tilde{x}^{(k+1)(p-1)-sp}.
\end{split}\end{equation*}

Now, we observe that~$(k+1)(p-1)-sp\ge0$
thanks to~\eqref{oler-0988776}, and therefore we deduce from~\eqref{rho-tildex.kju} that
\begin{equation*}
\begin{split}&
\int_{B_{w(x)}^{+}\cap B_{\left|x\right|}(x)\cap B_{\frac{r}{2}}^c(x)}\frac{\left|f'(\left|x\right|)\right|^{p-2}\left(f''(\left|x+y\right|)+f''(\left|x\right|)\right)}{\left|y\right|^{n+sp-p}}\,dy
\\&\qquad\qquad\lesssim  \frac{r^{-(p-1)(qs+k+1)}}{1-s}\rho^{(k+1)(p-1)-sp}
\leq  \frac{\rho^{-qs(p-1)-sp}}{1-s}.
\end{split}
\end{equation*}
This proves~\eqref{iure}
when~$x\in B_{\frac{2}{3}r}\setminus \overline{B}_{\frac{r}{2}}$.

If instead~$x\in B_{\hat{r}}\setminus \overline{B}_{\frac{2}{3}r}$, 
we exploit~\eqref{p124353} to obtain the estimate
\begin{equation}\label{poiuytrelkjhgfd0987654}\begin{split}
&\left|f'(x)\right|^{p-2}\left(f''(x+y)+f''(x)\right)\\ \leq\;&
{\tilde{c}_{p,m}}^{p-2}\left(\frac{r-|x|}{2}\right)^{(qs+1)(2-p)}
C_{p,m}\left( \left(\frac{r-|x+y|}{2}\right)^{-qs-2}
+\left(\frac{r-|x|}{2}\right)^{-qs-2}
\right).
\end{split}\end{equation}
Also, if~$y\in B_{w(x)}$
then~$|x+y|\le |x|+|y|\le |x|+w(x)
\le |x|+\rho$, thanks to~\eqref{papapsdfre}. 
Consequently,
\begin{equation}\label{05689dgjwahsavfyewt7834}
\frac{r-|x+y|}{2}\ge \frac{r-|x|-\rho}{2}=
\rho-\frac\rho{2}=\frac\rho{2}.
\end{equation}
{F}rom this and~\eqref{poiuytrelkjhgfd0987654}, we infer that
\begin{eqnarray*}
\left|f'(x)\right|^{p-2}\left(f''(x+y)+f''(x)\right)\lesssim
\rho^{(qs+1)(2-p)}\rho^{-qs-2}=\rho^{-qs(p-1)-p} 
.
\end{eqnarray*}
As a result,
\begin{equation*}
\begin{split}
&\int_{B_{w(x)}^{+}\cap B_{\left|x\right|}(x)\cap B_{\frac{r}{2}}^c(x)} \frac{\left|f'(x)\right|^{p-2}\left(f''(x+y)+f''(x)\right)}{\left|y\right|^{n+sp-p}}\,dy 
\lesssim   \rho^{-qs(p-1)-p} \int_{B_{w(x)}} \frac{dy}{\left|y\right|^{n+sp-p}}\\
&\qquad\qquad \leq  \rho^{-qs(p-1)-p} \int_{B_{\rho}} \frac{dy}{\left|y\right|^{n+sp-p}}
\lesssim  \frac{\rho^{-qs(p-1)-sp}}{1-s}.
\end{split}
\end{equation*}
This concludes the proof of~\eqref{iure}. 

Next, we show that~\eqref{spoart} holds true. For this, we observe that, as a consequence of~\eqref{yllwdcfere} and~\eqref{tildecpm}, if~$x\in B_{\frac{2}{3}r}\setminus \overline{B}_{\frac{r}{2}}$ and~$y\in B_{w(x)}$, then 
\begin{equation*}
\begin{split}
f'(\left|x\right|)-f''(\left|x\right|)\frac{\left|y\right|}{2}&\geq c_{p,m}\left(\frac{r}{2}\right)^{-qs-k-1}\tilde{x}^{k}-C_{p,m}\left(\frac{r}{6}\right)^{-qs-k-1} \tilde{x}^{k-1}\frac{\left|y\right|}{2}\\
&\geq  c_{p,m}\left(\frac{r}{2}\right)^{-qs-k-1}\tilde{x}^{k}-C_{p,m}\left(\frac{r}{6}\right)^{-qs-k-1} \tilde{x}^{k-1}\frac{w(x)}{2}\\
&\geq c_{p,m}\left(\frac{r}{2}\right)^{-qs-k-1}\tilde{x}^{k}-\frac{\tilde{c}_{p,m}}{2}\left(\frac{r}{6}\right)^{-qs-k-1} \tilde{x}^{k}\\ 
&=c_{p,m}\left(\frac{r}{2}\right)^{-qs-k-1}\tilde{x}^{k}-\frac{c_{p,m}}{2}\left(\frac{1}{3}\right)^{q+\left[\frac{p}{p-1}\right]+1}\left(\frac{r}{6}\right)^{-qs-k-1} \tilde{x}^{k}\\ 
& \geq c_{p,m}\left(\frac{r}{2}\right)^{-qs-k-1}\tilde{x}^{k}-\frac{c_{p,m}}{2}\left(\frac{1}{3}\right)^{qs+k+1}\left(\frac{r}{6}\right)^{-qs-k-1} \tilde{x}^{k}\\ 
&=\frac{c_{p,m}}{2}\left(\frac{r}{2}\right)^{-qs-k-1}\tilde{x}^{k}.
\end{split}
\end{equation*}
Thus, making again use of~\eqref{yllwdcfere},
\begin{eqnarray*}
&&f'(|x|)^{p-3} f''(|x+y|)^2+
\left|f'(|x|)-f''(|x|)
\frac{|y|}{2}\right|^{p-3}f''(|x|)^2
\\ &\leq &
c_{p,m}^{p-3}\left(\frac{r}{2}\right)^{-(qs+k+1)(p-3)}
\left(|x|-\frac{r}{2}\right)^{k(p-3)}
C_{p,m}^2\left(\frac{r}6\right)^{-2(qs+k+1)}\left(|x+y|-\frac{r}{2}\right)^{2(k-1)}
\\&&\qquad +
\frac{c_{p,m}^{p-3}}{2^{p-3}}\left(\frac{r}{2}\right)^{-(qs+k+1)(p-3)}
\tilde{x}^{k(p-3)}
C_{p,m}^2\left(\frac{r}6\right)^{-2(qs+k+1)}\left(|x|-\frac{r}{2}\right)^{2(k-1)}\\
&\lesssim &
r^{-(p-1)(qs+k+1)} {\tilde{x}}^{k(p-3)}
\left( \left(|x+y|-\frac{r}{2}\right)^{2(k-1)} +
{\tilde{x}}^{2(k-1)}\right)
\\&\leq & r^{-(p-1)(qs+k+1)} {\tilde{x}}^{k(p-3)}
\big( (\tilde{x}+|y|)^{2(k-1)} +
{\tilde{x}}^{2(k-1)}\big).
\end{eqnarray*}
As a consequence,
using also~\eqref{rho-tildex.kju}, we infer that
\begin{equation*}
\begin{split}
&\int_{B_{w(x)}^{+}\cap B_{\left|x\right|}(x)\cap B_{\frac{r}{2}}^c(x)}\frac{f'(\left|x\right|)^{p-3}f''(\left|x+y\right|)^2+\left|f'(\left|x\right|)-f''(\left|x\right|)\frac{\left|y\right|}{2}\right|^{p-3}f''(\left|x\right|)^2}{\left|y\right|^{n+sp-p-1}}\,dy\\
\lesssim \;& r^{-(p-1)(qs+k+1)}\tilde{x}^{k(p-3)}
\left( \int_{B_{w(x)}}\frac{ \left(\tilde{x}+\left|y\right|\right)^{2(k-1)}}{|y|^{n+sp-p-1}}\,dy +\int_{B_{w(x)}}\frac{\tilde{x}^{2(k-1)}}{|y|^{n+sp-p-1}}\,dy\right)\\
\leq \;& r^{-(p-1)(qs+k+1)} \tilde{x}^{(k+1)(p-1)-sp}\left( \int_{B_{\alpha_{p,m}}}\frac{ \left(1+\left|y\right|\right)^{2(k-1)}}{\left|y\right|^{n+sp-p-1}}\,dy +\int_{B_{\alpha_{p,m}}}\frac{dy}{\left|y\right|^{n+sp-p-1}}\right)\\ 
\leq \;& r^{-(p-1)(qs+k+1)} \rho^{(k+1)(p-1)-sp} \int_{B_{\alpha_{p,m}}}\frac{dy}{\left|y\right|^{n+sp-p-1}}\\
\lesssim \;& \frac{\rho^{-qs(p-1)-sp}}{1+p-sp}\\
\leq \;& \frac{\rho^{-qs(p-1)-sp}}{1-s},
\end{split}
\end{equation*}
which establishes~\eqref{spoart}
when~$x\in B_{\frac{2}{3}r}\setminus \overline{B}_{\frac{r}{2}}$,

Similarly, using~\eqref{p124353}, we find that, for all~$x\in B_{\hat{r}}\setminus B_{\frac{2}{3}r}$ and~$y\in B_{w(x)}$,
\begin{equation*}
\begin{split}
f'(\left|x\right|)-f''(\left|x\right|)\frac{\left|y\right|}{2}&\geq f'(\left|x\right|)-f''(\left|x\right|)\frac{w(x)}{2}\\
&=f'(\left|x\right|)-f''(\left|x\right|)\frac{\alpha_{p,m} \rho}{2}\\
&\geq \tilde{c}_{p,m} \rho^{-qs-1}-C_{p,m}\rho^{-qs-1}\frac{\alpha_{p,m}}{2}\\
&=\rho^{-qs-1}\frac{\tilde{c}_{p,m}}{2}. 
\end{split}
\end{equation*}
This and~\eqref{p124353} give that
\begin{eqnarray*}
&&f'(|x|)^{p-3}f''(|x+y|)^2
+\left|f'(|x|)-f''(|x|)\frac{|y|}{2}\right|^{p-3}f''(|x|)^2
\\
&\leq&
\tilde{c}_{p,m}^{p-3}\rho^{-(qs+1)(p-3)}
C_{p,m}^2\left(  \frac{r-|x+y|}2 \right)^{-2(qs+2)}
+
\rho^{-(qs+1)(p-3)}\frac{\tilde{c}_{p,m}^{p-3}}{2^{p-3}}
C_{p,m}^2\rho^{-2(qs+2)}
\end{eqnarray*} 
Hence, recalling~\eqref{05689dgjwahsavfyewt7834},
\begin{eqnarray*}
&&f'(|x|)^{p-3}f''(|x+y|)^2
+\left|f'(|x|)-f''(|x|)\frac{|y|}{2}\right|^{p-3}f''(|x|)^2
\lesssim
\rho^{-(qs+1)(p-3)-2(qs+2)}.
\end{eqnarray*}
{F}rom this we thereby find that
\begin{equation*}
\begin{split}
&\int_{B_{w(x)}^{+}\cap B_{\left|x\right|}(x)\cap B_{\frac{r}{2}}^c(x)}\frac{f'(\left|x\right|)^{p-3}f''(\left|x+y\right|)^2+\left|f'(\left|x\right|)-f''(\left|x\right|)\frac{\left|y\right|}{2}\right|^{p-3}f''(\left|x\right|)^2}{|y|^{n+sp-p-1}}\,dy\\ &\qquad
\lesssim   \rho^{(qs+1)(1-p)}\int_{B_{w(x)}} \frac{dy}{\left|y\right|^{n+sp-p-1}}\\
&\qquad=  \rho^{(qs+1)(1-p)}\rho^{-sp+p-1}\int_{B_{\alpha_{p,m}}}\frac{dy}{\left|y\right|^{n+sp-p-1}}
\lesssim \frac{\rho^{-qs(p-1)-sp}}{1+p-sp}
\leq \frac{\rho^{-qs(p-1)-sp}}{1-s}.
\end{split}
\end{equation*}
Therefore the proof of~\eqref{spoart} is complete.

Now we prove~\eqref{add-76yh4}. To accomplish this goal, we prove first that there exists~$C_p\in(0,+\infty)$ such that, for every~$x\in B_{\hat{r}}$ and~$y\in B_{\frac{w(x)}{\left|x\right|}}^{+}\cap B_{1}(x/\left|x\right|)\cap B_{\frac{r}{2\left|x\right|}}^c(x/\left|x\right|)$,
\begin{equation}\label{Krema}
\left(\left|\frac{x}{\left|x\right|}+y\right|-1\right)^{p-1}-\left(1-\left|y-\frac{x}{\left|x\right|}\right|\right)^{p-1}\leq C_p \left|y\right|^p.
\end{equation}
To show the claim in~\eqref{Krema}, we assume without loss of generality that~$x=\left|x\right|e_n$. Then, if we write~$y=(y',y_n)$ with~$y':=(y_1,\dots, y_{n-1})$, we have that 
\begin{equation*}
\frac{d}{dy_n}\left|\frac{x}{\left|x\right|}+y\right|=\frac{d}{dy_n}\sqrt{\left|y'\right|^2+y_n^2+2y_n+1}=\frac{1+y_n}{\sqrt{\left|y'\right|^2+y_n^2+2y_n+1}}
\end{equation*}
and similarly
\begin{equation}\label{delo}
\begin{split}
\frac{d^2}{dy_n^2}\left|\frac{x}{\left|x\right|}+y\right| &=\frac{(\left|y'\right|^2+y_n^2+2y_n+1)-(1+y_n)^2}{\left(\left|y'\right|^2+y_n^2+2y_n+1\right)^{\frac{3}{2}}}\\
&=\frac{\left|y'\right|^2}{\left(\left|y'\right|^2+y_n^2+2y_n+1\right)^{\frac{3}{2}}}.
\end{split}
\end{equation}
We also compute that 
\begin{equation*}
\frac{d}{dy_n}\left|y-\frac{x}{\left|x\right|}\right|   =\frac{d}{dy_n}\sqrt{\left|y'\right|^2+y_n^2-2y_n+1}=\frac{y_n-1}{\sqrt{\left|y'\right|^2+y_n^2-2y_n+1}} 
\end{equation*}
and 
\begin{equation}\label{golo}
\frac{d^2}{dy_n^2}\left|y-\frac{x}{\left|x\right|}\right|=\frac{\left|y'\right|^2}{\left(\left|y'\right|^2+y_n^2-2y_n+1\right)^{\frac{3}{2}}}.
\end{equation}

Now we denote by~$\pi_n:\R^n\to \R$ the projection along the~$n$-th axis
and we set
$$A:=\pi_n \left(B_{\frac{w(x)}{\left|x\right|}}^{+}\cap B_{1}(x/\left|x\right|)\cap B_{\frac{r}{2\left|x\right|}}^c(x/\left|x\right|)\right)
.$$ Then, in light of~\eqref{delo} and~\eqref{golo},
\begin{equation*}
\sup_{y_n\in A}\left|\frac{d^2}{dy_n^2}\left|\frac{x}{\left|x\right|}+y\right|\right|\leq \frac{\left|y'\right|^2}{\left(\left|y'\right|^2+1\right)^{\frac{3}{2}}}\leq 1.
\end{equation*}
Also, since~$x\in B_{\hat{r}}$,
\begin{equation*}
\sup_{y_n\in A}\left|\frac{d^2}{dy_n^2}\left|y-\frac{x}{\left|x\right|}\right|\right|\leq\frac{1}{\left(\left|y'\right|^2+y_n^2-2y_n+1\right)^{\frac{1}{2}}}\leq \frac{2\left|x\right|}{r}\leq \frac{2\hat{r}}{r}\leq 2.
\end{equation*}
By Taylor's Theorem we thereby deduce that
\begin{equation}\label{labdtcbbbbre}
\left|\frac{x}{\left|x\right|}+y\right|-1=y_n+\frac{y_n^2}{2}\frac{d^2}{d\xi^2}\left|\frac{x}{\left|x\right|}-(y',\xi)\right|\leq y_n+\frac{1}{2}y_n^2 
\end{equation}
and that
\begin{equation}\label{clotebdgt}
1-\left|y-\frac{x}{\left|x\right|}\right|=y_n-\frac{y_n^2}{2} \frac{d^2}{d\eta^2}\left|(y',\eta)-\frac{x}{\left|x\right|}\right| \geq y_n-y_n^2,
\end{equation}for some~$\eta$, $\xi \in (0,y_n)$.

Furthermore, by~\eqref{papapsdfre}, we have that, for every~$y\in B_{\frac{w(x)}{\left|x\right|}}^{+}\cap B_{1}(x/\left|x\right|)\cap B_{\frac{r}{2\left|x\right|}}^c(x/\left|x\right|)$,
\begin{equation}\label{cfvd-nbvrt5vcf}
(1-y_n)\geq (1-\left|y\right|)\geq \left(1-\frac{w(x)}{\left|x\right|}\right)\geq \left(1-\frac{\min \left\lbrace \tilde{x},\rho\right\rbrace}{\left|x\right|}\right)\geq \frac{1}{2}. 
\end{equation}
Therefore, thanks to~\eqref{labdtcbbbbre}, \eqref{clotebdgt} and~\eqref{cfvd-nbvrt5vcf}, and applying Taylor's Theorem once again, we infer that, for every~$y\in B_{\frac{w(x)}{\left|x\right|}}^{+}\cap B_{1}(x/\left|x\right|)\cap B_{\frac{r}{2\left|x\right|}}^c(x/\left|x\right|)$, there exist some~$\xi$, $\eta\in (0,y_n)$ such that 
\begin{equation*}
\begin{split}
&\left(\left|\frac{x}{|x|}+y\right|-1\right)^{p-1}-\left(1-\left|\frac{x}{|x|}-y\right|\right)^{p-1}\\
=\;& \left(\sqrt{\left|y'\right|^2+y_n^2+2y_n+1}-1\right)^{p-1}-\left(1-\sqrt{\left|y'\right|^2+y_n^2-2y_n+1}\right)^{p-1}\\
\leq\; & y_n^{p-1}\left(\left(1+\frac{1}{2} y_n\right)^{p-1}-\left(1-y_n\right)^{p-1}\right)\\
\leq\; & y_n^{p-1}\left( 1+\frac{p-1}{2}\left(1+\frac{1}{2}\xi\right)^{p-2}y_n-1+(p-1)\left(1-\eta\right)^{p-2}y_n\right)\\
\leq \;& y_n^{p-1}\left(\frac{p-1}{2}y_n +(p-1)\left(1-y_n\right)^{p-2}y_n\right)\\ 
\leq \;& y_n^{p-1}\left(\frac{p-1}{2}y_n +(p-1)2^{2-p}y_n\right)\\ 
\leq\; &C_p y_n^p\\
\leq \;&C_{p}\left|y\right|^p
\end{split}
\end{equation*}
where we have defined~$C_p:=(p-1)\left(2^{-1}+2^{2-p}\right)$. This concludes the proof of~\eqref{Krema}.

As a consequence of a change of variable and~\eqref{Krema}, if~$x\in B_{\frac{2}{3}r}\setminus \overline{B}_{\frac{r}{2}}$,
\begin{eqnarray*}
&&\int_{B_{w(x)}^{+}\cap B_{\left|x\right|}(x)\cap B_{\frac{r}{2}}^c(x)} \frac{\left(\left(\left|x+y\right|-\left|x\right|\right)^{p-1}- \left(\left|x\right|-\left|x-y\right|\right)^{p-1}\right)\left|f'(\left|x\right|)\right|^{p-1}}{\left|y\right|^{n+sp}}\,dy\\
&=& |x|^{p-1-sp}\int_{B_{\frac{w(x)}{|x|}}^{+}\cap B_{1}(x/|x|)\cap B_{\frac{r}{2|x|}}^c(x/|x|)} 
\frac{((|x/|x|+y|-1)^{p-1}- (1-|x/|x|-y|)^{p-1})|f'(|x|)|^{p-1}}{|y|^{n+sp}}\,dy\\
&\leq &C_p |x|^{p-1-sp}\int_{B_{\frac{w(x)}{|x|}}^{+}\cap B_{1}(x/|x|)\cap B_{\frac{r}{2|x|}}^c(x/|x|)}
\frac{|f'(|x|)|^{p-1}}{|y|^{n+sp-p}}\,dy.
\end{eqnarray*}
{F}rom this and~\eqref{yllwdcfere} we thus find that
\begin{eqnarray*}
&&\int_{B_{w(x)}^{+}\cap B_{\left|x\right|}(x)\cap B_{\frac{r}{2}}^c(x)} \frac{\left(\left(\left|x+y\right|-\left|x\right|\right)^{p-1}- \left(\left|x\right|-\left|x-y\right|\right)^{p-1}\right)\left|f'(\left|x\right|)\right|^{p-1}}{\left|y\right|^{n+sp}}\,dy\\
&\leq &C_p C_{p,m}^{p-1}|x|^{p-1-sp}\left(\frac{r}{6}\right)^{-(p-1)(qs+k+1)}
\left(|x|-\frac{r}{2}\right)^{k(p-1)}
\int_{B_{\frac{w(x)}{|x|}}^{+}\cap B_{1}(x/|x|)\cap B_{\frac{r}{2|x|}}^c(x/|x|)}\frac{dy}{|y|^{n+sp-p}}\\
&\lesssim &
|x|^{p-1-sp}r^{-(p-1)(qs+k+1)}
{\tilde{x}}^{k(p-1)}
\int_{B_{\frac{w(x)}{|x|}}^{+}\cap B_{1}(x/|x|)\cap B_{\frac{r}{2|x|}}^c(x/|x|)}\frac{dy}{|y|^{n+sp-p}}.
\end{eqnarray*}

Now, we observe that~$w(x)\le \tilde{x}\le |x|$, thanks to~\eqref{fyuedwiqryeuwiyuwdtilde} and~\eqref{papapsdfre}. Therefore,
\begin{eqnarray*}
&&\int_{B_{w(x)}^{+}\cap B_{\left|x\right|}(x)\cap B_{\frac{r}{2}}^c(x)} \frac{\left(\left(\left|x+y\right|-\left|x\right|\right)^{p-1}- \left(\left|x\right|-\left|x-y\right|\right)^{p-1}\right)\left|f'(\left|x\right|)\right|^{p-1}}{\left|y\right|^{n+sp}}\,dy\\
&\lesssim &
|x|^{p-1-sp}r^{-(p-1)(qs+k+1)}
{\tilde{x}}^{k(p-1)}
\int_{B_{1}}\frac{dy}{|y|^{n+sp-p}}\\
&\lesssim & \frac{|x|^{p-1-sp}r^{-(p-1)(qs+k+1)}
{\tilde{x}}^{k(p-1)} }{p-sp}
.\end{eqnarray*}
Accordingly,
recalling also~\eqref{rho-tildex.kju},
\begin{equation*}
\begin{split}
&\int_{B_{w(x)}^{+}\cap B_{\left|x\right|}(x)\cap B_{\frac{r}{2}}^c(x)} \frac{\left(\left(\left|x+y\right|-\left|x\right|\right)^{p-1}- \left(\left|x\right|-\left|x-y\right|\right)^{p-1}\right)\left|f'(\left|x\right|)\right|^{p-1}}{\left|y\right|^{n+sp}}\,dy\\
&\qquad\qquad \lesssim \frac{r^{-sp-(p-1)(qs+k)}\rho^{k(p-1)} }{p-sp}
\lesssim \frac{\rho^{-qs(p-1)-sp}}{1-s}
,\end{split}
\end{equation*}
which proves~\eqref{add-76yh4}
when~$x\in B_{\frac{2}{3}r}\setminus \overline{B}_{\frac{r}{2}}$.

If~$x\in B_{\hat{r}}\setminus B_{\frac{2}{3}r}$, we argue
in a similar way, exploiting a change of variable and~\eqref{Krema}, but
using now~\eqref{p124353} to estimate~$|f'(|x|)|$ and~\eqref{jnurefv540as}
to see that~$w(x)\le\rho$. In this way, we conclude that 
\begin{equation*}
\begin{split}
&\int_{B_{w(x)}^{+}\cap B_{\left|x\right|}(x)\cap B_{\frac{r}{2}}^c(x)} \frac{\left(\left(\left|x+y\right|-\left|x\right|\right)^{p-1}- \left(\left|x\right|-\left|x-y\right|\right)^{p-1}\right)\left|f'(\left|x\right|)\right|^{p-1}}{\left|y\right|^{n+sp}}\,dy\\
\lesssim \;& \left|x\right|^{p-1-sp} \int_{B_{\frac{w(x)}{\left|x\right|}}^{+}\cap B_{1}(x/\left|x\right|)\cap B_{\frac{r}{2\left|x\right|}}^c(x/\left|x\right|)} \frac{\left|f'(\left|x\right|)\right|^{p-1}}{\left|y\right|^{n+sp-p}}\,dy\\
\leq\;  &C_{p,m}^{p-1}\left|x\right|^{p-1-sp}\rho^{-(qs+1)(p-1)} \int_{B_{\frac{w(x)}{\left|x\right|}}} \frac{dy}{\left|y\right|^{n+sp-p}}\\
\lesssim \;&  \left|x\right|^{p-1-sp}\rho^{-(qs+1)(p-1)}\frac{\rho^{p-sp}}{\left|x\right|^{p-sp}}\frac{1}{p-sp}\\
\lesssim \;& \frac{\rho^{-qs(p-1)-sp}}{1-s}.
\end{split}
\end{equation*}
Thus, the claim in~\eqref{add-76yh4} holds true for every~$x\in B_{\hat{r}}\setminus \overline{B}_{\frac{r}{2}}$.

In order to complete the proof of~\eqref{CL1} it is only left to show~\eqref{rhy654vf}. To do so, we observe that, for every~$t>0$ and~$y\in \R^n\setminus  B_{t}$, 
\begin{equation}\label{bruio54c7.kju}
(1+\left|y\right|)^{k-1}-1\leq (1+\left|y\right|)^{k+1}\leq \left(t^{-1}+1\right)^{k+1}\left|y\right|^{k+1}.
\end{equation}
Moreover, we see that 
\begin{equation}\label{9806ygvcsar2wsdxc54rfgb7uj9ol}
\begin{split}
&\int_{\left(B_{\rho}^{+}\setminus B_{w(x)}^{+}\right)\cap B_{\left|x\right|}(x)\cap B_{\frac{r}{2}}^c(x)} \frac{\left|h(\left|y+x\right|)-h(\left|x\right|)\right|^{p-1}-\left|h(\left|x\right|)-h(\left|x-y\right|)\right|^{p-1}}{\left|y\right|^{n+sp}}\,dy\\
\leq & \int_{\left(B_{\rho}^{+}\setminus B_{w(x)}^{+}\right)\cap B_{\left|x\right|}(x)\cap B_{\frac{r}{2}}^c(x)} \frac{\left|h(\left|y+x\right|)-h(\left|x\right|)\right|^{p-1}}{\left|y\right|^{n+sp}}\,dy\\
\leq  & \int_{B_{\rho}^{+}\setminus B_{w(x)}^{+}} \frac{\left|h(\left|y+x\right|)-h(\left|x\right|)\right|^{p-1}}{\left|y\right|^{n+sp}}\,dy\\
\leq  & \int_{B_{\rho}^{+}\setminus B_{w(x)}^{+}} \frac{\left|f(\left|y+x\right|)-f(\left|x\right|)\right|^{p-1}}{\left|y\right|^{n+sp}}\,dy.
\end{split}\end{equation}

Now, if~$x\in B_{\frac{2}{3}r} \setminus  \overline{B}_{\frac{r}{2}}$
and~$y\in B_\rho$,
we have that~$\left(\tilde{x}+\frac{r}2, \tilde{x}+\frac{r}2+|y|\right) \subset \left(\frac{r}2, \frac{5r}6 \right)$. Therefore,
we employ~\eqref{yllwdcfere} with~$\rho_0:=r/6$ and we obtain that
\begin{equation}\label{8954tfuegdsvhfakfey}\begin{split}
& \left|f(\left|y+x\right|)-f(\left|x\right|)\right|
\le \left|f\left(|y|+\tilde{x} +\frac{r}2\right)-f\left(\tilde{x}+\frac{r}2\right)\right|
=\int_{\tilde{x}}^{\tilde{x}+|y|}f'\left(t+\frac{r}2\right)\,dt 
\\&\qquad
\leq  C_{p,m} \left(\frac{r}6\right)^{-qs-k-1} \int_{\tilde{x}}^{\tilde{x}+|y|} t^k\,dt    \lesssim r^{-qs-k-1} \big( (\tilde{x}+|y|)^{k+1} - \tilde{x}^{k+1}\big) .
\end{split}\end{equation}
Thus, if~$x\in B_{\frac{2}{3}r} \setminus  \overline{B}_{\frac{r}{2}}$,
we employ a change of variable to deduce that
\begin{equation*}
\begin{split}
&\int_{\left(B_{\rho}^{+}\setminus B_{w(x)}^{+}\right)\cap B_{\left|x\right|}(x)\cap B_{\frac{r}{2}}^c(x)} \frac{\left|h(\left|y+x\right|)-h(\left|x\right|)\right|^{p-1}-\left|h(\left|x\right|)-h(\left|x-y\right|)\right|^{p-1}}{\left|y\right|^{n+sp}}\,dy\\
\lesssim \;&
r^{-(p-1)(qs+k+1)}\int_{B_{\rho} \setminus B_{w(x)}} \frac{\left((\tilde{x}+\left|y\right|)^{k+1}-\tilde{x}^{k+1}\right)^{p-1}}{\left|y\right|^{n+sp}}\,dy\\
\lesssim\; &  {r}^{-(p-1)(qs+k+1)} \tilde{x}^{(k+1)(p-1)-sp} \int_{B_{{\rho}/{\tilde{x}}} \setminus B_{\alpha_{p,m}}} \frac{\left((1+\left|y\right|)^{k+1}-1\right)^{p-1}}{\left|y\right|^{n+sp}}\,dy.
\end{split}
\end{equation*}
Now, formula~\eqref{bruio54c7.kju} (used here with~$t:=\alpha_{p,m}$)
gives that
\begin{equation*}
\begin{split}
&\int_{\left(B_{\rho}^{+}\setminus B_{w(x)}^{+}\right)\cap B_{\left|x\right|}(x)\cap B_{\frac{r}{2}}^c(x)} \frac{\left|h(\left|y+x\right|)-h(\left|x\right|)\right|^{p-1}-\left|h(\left|x\right|)-h(\left|x-y\right|)\right|^{p-1}}{\left|y\right|^{n+sp}}\,dy\\
\lesssim \;& r^{-(p-1)(qs+k+1)}\tilde{x}^{(k+1)(p-1)-sp} \int_{B_{{\rho}/{\tilde{x}}} \setminus B_{\alpha_{p,m}}} \frac{dy}{\left|y\right|^{n+sp-(k+1)(p-1)}}\\
\leq\; & r^{-(p-1)(qs+k+1)}\tilde{x}^{(k+1)(p-1)-sp} \int_{B_{{\rho}/{\tilde{x}}}} \frac{dy}{\left|y\right|^{n+sp-(k+1)(p-1)}}\\
\lesssim\; & \frac{r^{-(p-1)(qs+k+1)}}{(k+1)(p-1)-sp}\rho^{(k+1)(p-1)-sp}.
\end{split}
\end{equation*}
Thus, recalling~\eqref{oler-0988776},
\begin{equation*}
\begin{split}
&\int_{\left(B_{\rho}^{+}\setminus B_{w(x)}^{+}\right)\cap B_{\left|x\right|}(x)\cap B_{\frac{r}{2}}^c(x)} \frac{\left|h(\left|y+x\right|)-h(\left|x\right|)\right|^{p-1}-\left|h(\left|x\right|)-h(\left|x-y\right|)\right|^{p-1}}{\left|y\right|^{n+sp}}\,dy\\
&\qquad\qquad\lesssim  \frac{\rho^{-qs(p-1)-sp}}{(k+1)(p-1)-sp}
\leq \frac{\rho^{-qs(p-1)-sp}}{1-s},
\end{split}
\end{equation*} which is~\eqref{rhy654vf}
when~$x\in B_{\frac{2}{3}r} \setminus  \overline{B}_{\frac{r}{2}}$.

Analogously, if~$x\in B_{\hat{r}}\setminus B_{\frac{2}{3}r}$
we use~\eqref{p124353}
into~\eqref{9806ygvcsar2wsdxc54rfgb7uj9ol}
to obtain that
\begin{equation*}
\begin{split}
&\int_{\left(B_{\rho}^{+}\setminus B_{w(x)}^{+}\right)\cap B_{\left|x\right|}(x)\cap B_{\frac{r}{2}}^c(x)} \frac{\left|h(\left|y+x\right|)-h(\left|x\right|)\right|^{p-1}-\left|h(\left|x\right|)-h(\left|x-y\right|)\right|^{p-1}}{\left|y\right|^{n+sp}}\,dy\\
&\qquad\qquad\leq C_{p,m}^{p-1}\int_{B_{\rho}^{+}\setminus B_{w(x)}^{+}} \frac{\rho^{-(qs+1)(p-1)}}{\left|y\right|^{n+sp-p+1}}\,dy.
\end{split}
\end{equation*}
Hence, changing variable and recalling~\eqref{gallgbffer430},
\begin{equation*}
\begin{split}
&\int_{\left(B_{\rho}^{+}\setminus B_{w(x)}^{+}\right)\cap B_{\left|x\right|}(x)\cap B_{\frac{r}{2}}^c(x)} \frac{\left|h(\left|y+x\right|)-h(\left|x\right|)\right|^{p-1}-\left|h(\left|x\right|)-h(\left|x-y\right|)\right|^{p-1}}{\left|y\right|^{n+sp}}\,dy\\
=\;& C_{p,m}^{p-1}\rho^{-(qs+1)(p-1)-sp+p-1}
\int_{B_{1}^{+}\setminus B_{\alpha_{p,m}}^{+}} \frac{dy}{\left|y\right|^{n+sp-p+1}}\\
\lesssim \;&  \rho^{-qs(p-1)-sp}\begin{dcases}
\frac{{\alpha_{p,m}}^{p-sp-1}-1}{1+sp-p}\quad &\mbox{if}\quad s\in \left(\frac{p-1}{p},1\right),\\
\left|\ln\left(\alpha_{p,m}\right)\right|\quad &\mbox{if}\quad s=\frac{p-1}{p},\\
\frac{1-{\alpha_{p,m}}^{p-sp-1}}{p-1-sp} \quad &\mbox{if}\quad s\in \left(\frac{p-1}{2p},\frac{p-1}{p}\right)
\end{dcases}\\
\lesssim \;&\frac{\rho^{-qs(p-1)-sp}}{1-s}.
\end{split}
\end{equation*}
This concludes the proof of~\eqref{rhy654vf} and thus of the claim in~\eqref{CL1}.
\medskip

\underline{Proof of~\eqref{CL2}}:
Toward this objective, 
we use the notation for~$\tilde{x}$ in~\eqref{fyuedwiqryeuwiyuwdtilde} and
we observe that,
for every~$y\in B_{\frac{r}{2\tilde{x}}}\left(\frac{x}{\tilde{x}}\right)$,
\begin{equation}\label{reflop754r}
(\left|y\right|+1)^{k+1}-1\leq (\left|y\right|+1)^{k+1}\leq 2^{k+1}\left|y\right|^{k+1}.
\end{equation}
Indeed, the first inequality is obvious.
For the second one, we use the triangular inequality to obtain that,
if~$y\in B_{\frac{r}{2\tilde{x}}}\left(\frac{x}{\tilde{x}}\right)$,
\begin{equation*}
\frac{|x|}{\tilde{x}}-\left|y\right|\leq \left|y-\frac{x}{\tilde{x}}\right|\leq \frac{r}{2\tilde{x}},
\end{equation*}
from which it follows that 
\begin{equation*}
\left|y\right|\geq \left(\left|x\right|-\frac{r}{2}\right)\frac{1}{\tilde{x}}=1,
\end{equation*}
and therefore the proof of~\eqref{reflop754r} is complete.

Moreover, if~$x\in B_{\frac{2}{3}r} \setminus \overline{B}_{\frac{r}{2}}$ and~$y\in B_\rho$, we recall~\eqref{8954tfuegdsvhfakfey} and we see that
\begin{eqnarray*}&&
\left|h(\left|y+x\right|)-h(\left|x\right|)\right|^{p-1}-\left|h(\left|x\right|)-h(\left|x-y\right|)\right|^{p-1}\le \left|h(\left|y+x\right|)-h(\left|x\right|)\right|^{p-1}\\
&&\qquad \le \left|f(\left|y+x\right|)-f(\left|x\right|)\right|^{p-1}
\lesssim r^{-(qs+k+1)(p-1)} \big( (\tilde{x}+|y|)^{k+1} - \tilde{x}^{k+1}\big)^{p-1},
\end{eqnarray*}
and thus a change of variable gives that
\begin{equation*}
\begin{split}
&\int_{B_{\rho}^{+}\cap B_{\frac{r}{2}}(x)} \frac{\left|h(\left|y+x\right|)-h(\left|x\right|)\right|^{p-1}-\left|h(\left|x\right|)-h(\left|x-y\right|)\right|^{p-1}}{\left|y\right|^{n+sp}}\,dy\\
\lesssim \;& r^{-(p-1)(qs+k+1)} \int_{B_{\rho}^{+}\cap B_{\frac{r}{2}}(x)}\frac{\big((|y|+\tilde{x})^{k+1}-\tilde{x}^{k+1}\big)^{p-1}}{|y|^{n+sp}}\,dy\\
= \;&  r^{-(p-1)(qs+k+1)}\tilde{x}^{(k+1)(p-1)-sp} \int_{B_{\frac{\rho}{\tilde{x}}}^{+}\cap B_{\frac{r}{2\tilde{x}}}\left(\frac{x}{\tilde{x}}\right)} \frac{\big((|y|+1)^{k+1}-1\big)^{p-1}}{|y|^{n+sp}}\,dy.\end{split}\end{equation*}
{F}rom this, and using also~\eqref{oler-0988776}, \eqref{gallgbffer430} and~\eqref{reflop754r} we obtain that
\begin{equation*}
\begin{split}
&\int_{B_{\rho}^{+}\cap B_{\frac{r}{2}}(x)} \frac{\left|h(\left|y+x\right|)-h(\left|x\right|)\right|^{p-1}-\left|h(\left|x\right|)-h(\left|x-y\right|)\right|^{p-1}}{\left|y\right|^{n+sp}}\,dy\\
\lesssim \;& r^{-(p-1)(qs+k+1)}\tilde{x}^{(k+1)(p-1)-sp} \int_{B_{\frac{\rho}{\tilde{x}}}}\frac{dy}{\left|y\right|^{n+sp-(k+1)(p-1)}}\,dy\\
\lesssim\; &\frac{r^{-(p-1)(qs+k+1)}}{(k+1)(p-1)-sp}\rho^{(k+1)(p-1)-sp} \\
\lesssim \;&\frac{\rho^{-qs(p-1)-sp}}{(k+1)(p-1)-sp}  \\
\leq\; & \frac{\rho^{-qs(p-1)-sp}}{1-s}. 
\end{split}
\end{equation*}
This establishes~\eqref{CL2} if~$x\in B_{\frac{2}{3}r} \setminus \overline{B}_{\frac{r}{2}}$.

If instead~$x\in B_{\hat r}\setminus B_{\frac{2}{3}r}$ we argue as follows.
We notice that
\begin{equation}\label{jvexzy} 
B_{\frac{r}{2}}(x)\subset \R^n\setminus B_{\tilde{x}},
\end{equation}
since, if~$y\in B_{\frac{r}{2}}(x)$ then, recalling~\eqref{fyuedwiqryeuwiyuwdtilde},
$$ |y|\ge |x|-|y-x|=\tilde{x}+\frac{r}2-|y-x|\ge \tilde{x}.$$

Thus, according to~\eqref{jnurefv540as} and~\eqref{jvexzy}, we have that
\begin{equation*}
B_{\rho}^{+}\cap B_{\frac{r}{2}}(x)\subset B_{\rho}^{+}\cap  B_{\tilde{x}}^c=\varnothing.
\end{equation*}
As a consequence,
\begin{equation*}
\int_{B_{\rho}^{+}\cap B_{|x|}(x)\cap B_{\frac{r}{2}}(x)} \frac{\left|h(\left|y+x\right|)-h(\left|x\right|)\right|^{p-1}-\left|h(\left|x\right|)-h(\left|x-y\right|)\right|^{p-1}}{\left|y\right|^{n+sp}}\,dy=0.
\end{equation*}
This concludes the proof of claim~\eqref{CL2}.
\medskip

\underline{Proof of~\eqref{CL3}}:
Making use of~\eqref{ewlop545454} we see that, for every~$y\in B_{\rho}$, 
\begin{equation*}
\begin{split}& |h(|y\pm x|)-h(|x|)|\le 
|f(|x\pm y|)-f(|x|)|\leq\int_{|x|}^{|x|+|y|}f'(t)\,dt
\leq f'(|x|+|y|)|y| \\
& \qquad \lesssim \left(r-|x|-|y|\right)^{-qs-1}|y|
=\left(2\rho-|y|\right)^{-qs-1}|y|
\leq \rho^{-qs-1}|y|.
\end{split}
\end{equation*}
Thanks to this, and also changing variable,
we find that
\begin{equation*}
\begin{split}
&\int_{B_{\rho}^{+}\cap B_{\left|x\right|}^c(x)} \frac{\left|h(\left|y+x\right|)-h(\left|x\right|)\right|^{p-1}+\left|h(\left|x-y\right|)-h(\left|x\right|)\right|^{p-1}}{\left|y\right|^{n+sp}}\,dy\\
\lesssim \;&  \rho^{-(p-1)(qs+1)} \int_{B_{\rho}^{+}\cap B_{\left|x\right|}^c(x)} \frac{dy}{\left|y\right|^{n+sp-p+1}}\\
= \;&  \rho^{-(p-1)(qs+1)}\left|x\right|^{p-1-sp} \int_{B_{\frac{\rho}{\left|x\right|}}^{+}\cap B_{1}^c\left(\frac{x}{\left|x\right|}\right)} \frac{dy}{\left|y\right|^{n+sp-p+1}}\\
\leq\; &  \rho^{-(p-1)(qs+1)}\left|x\right|^{p-1-sp} \int_{B_{\frac{\rho}{\left|x\right|}}^+\cap
B_{1}^c\left(\frac{x}{\left|x\right|}\right)
} \frac{dy}{\left|y'\right|^{n+sp-p+1}}.
\end{split}
\end{equation*}
We suppose, without loss of generality, that~$x=|x| e_n$, and so
\begin{eqnarray*}
&&\int_{B_{\rho}^{+}\cap B_{\left|x\right|}^c(x)} \frac{\left|h(\left|y+x\right|)-h(\left|x\right|)\right|^{p-1}+\left|h(\left|x-y\right|)-h(\left|x\right|)\right|^{p-1}}{\left|y\right|^{n+sp}}\,dy\\
&\lesssim &
\rho^{-(p-1)(qs+1)}\left|x\right|^{p-1-sp} \int_{B_{\frac{\rho}{\left|x\right|}}^+\cap B_{1}^c\left(e_n\right)
} \frac{dy}{\left|y'\right|^{n+sp-p+1}}
\end{eqnarray*}

Now, we remark that, if~$x\in B_{\hat r}\setminus \overline{B}_{\frac{r}{2}}$,
\begin{equation*}
\frac{\rho}{\left|x\right|}=\frac{r}{2\left|x\right|}-\frac{1}{2}\leq 1-\frac{1}{2}=\frac{1}{2}.
\end{equation*}   
As a consequence,  denoting by~$y=(y',y_n)\in\R^{n-1}\times\R$, we have that
$$ B_{\frac{\rho}{\left|x\right|}}^+\cap B_1^c(e_n)\subset
\left\lbrace y\in\R^n\;{\mbox{ s.t. }}\;
|y'|\leq \frac{\rho}{\left|x\right|}\;{\mbox{ and }}\;
y_n\in\left(0, 1-\sqrt{1-\left|y'\right|^2}\right)\right\rbrace
,$$
and therefore
\begin{equation*}
\begin{split}
&\int_{B_{\rho}^{+}\cap B_{\left|x\right|}^c(x)} \frac{\left|h(\left|y+x\right|)-h(\left|x\right|)\right|^{p-1}+\left|h(\left|x-y\right|)-h(\left|x\right|)\right|^{p-1}}{\left|y\right|^{n+sp}}\,dy\\
\lesssim \;&  \rho^{-(p-1)(qs+1)}\left|x\right|^{p-1-sp} \int_{
\left\lbrace\left|y'\right|\leq \frac{\rho}{\left|x\right|}\right\rbrace}
\frac{1}{\left|y'\right|^{n+sp-p+1}}
\left(\int_{0}^{1-\sqrt{1-\left|y'\right|^2}} dy_n\right)
\,dy'\\
= \;& \rho^{-(p-1)(qs+1)}\left|x\right|^{p-1-sp} \int_{\left\lbrace\left|y'\right|\leq \frac{\rho}{\left|x\right|}\right\rbrace}\frac{1-\sqrt{1-\left|y'\right|^2}}{\left|y'\right|^{n+sp-p+1}}\,dy'\\
\leq \;& \rho^{-(p-1)(qs+1)}\left|x\right|^{p-1-sp} \int_{\left\lbrace\left|y'\right|\leq \frac{\rho}{\left|x\right|}\right\rbrace}\frac{2}{\sqrt{3}\left|y'\right|^{n+sp-p-1}}\,dy'\\
\lesssim \;& \rho^{-(p-1)(qs+1)}\left|x\right|^{p-1-sp}\frac{\rho^{p-sp}}{\left|x\right|^{p-sp}}\frac{1}{p-sp}\\
\lesssim \;& \frac{\rho^{-qs(p-1)-sp}}{1-s}. 
\end{split}
\end{equation*}
This concludes the proof of claim~\eqref{CL3}. 
\medskip

\noindent{\em{ {\bf{(iii) Case $ x\in B_{r}\setminus B_{\hat{r}}$.}}}}
In this case, for every~$y\in \R^n$ it holds that 
\begin{equation*}
v(y)-v(x)=v(y)-1\leq 0.
\end{equation*}
{F}rom this, we deduce that
\begin{equation*}
\int_{B_{\rho}(x)}\frac{\left(v(y)-v(x)\right)\left|v(y)-v(x)\right|^{p-2}}{\left|x-y\right|^{n+sp}}\,dy\leq 0,
\end{equation*}
and~\eqref{frl54t4t409} trivially follows.
\end{proof}

We now provide an estimate on~$r-\hat{r}$, where~$\hat r$
has been defined in~\eqref{g874rfegt56}.

\begin{prop}\label{telviv}
Let~$p\in(1,+\infty)$, $m\in \left[p,+\infty\right)$, $s\in(0,1)$ and~$r\in(1,+\infty)$. Let~$f$ and~$\hat{r}$ be defined respectively as in~\eqref{vdefinition} and~\eqref{g874rfegt56}. 

Then, there exists~$\widehat{C}_{p,m} \in (0,+\infty)$ such that 
\begin{equation*}
r-\hat{r}>\widehat{C}_{p,m}. 
\end{equation*}
\end{prop}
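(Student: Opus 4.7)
The plan is to invert the defining equation $f(\hat r) = 1$ (valid by continuity of $f$ and the minimality characterization in \eqref{g874rfegt56}) to extract a lower bound on $r - \hat r$. Using $g(t) = t^{-qs}$ from \eqref{fuhweoiu34ii43ty43ugijkebjwk7684903} together with the identity
$$g^{(i)}\!\left(\tfrac{r}{2}\right) = (-1)^i \prod_{j=0}^{i-1}(qs+j)\left(\tfrac{r}{2}\right)^{-qs-i},$$
the equation $f(\hat r)=1$ can be rewritten as
$$(r - \hat r)^{-qs} \;=\; 1 \;+\; \sum_{i=0}^k c_{q,s,i}\left(\tfrac{r}{2}\right)^{-qs-i}\left(\hat r - \tfrac{r}{2}\right)^i,$$
with $c_{q,s,i}$ as in \eqref{contevbdkop}.

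I would then estimate the right-hand side. Since $\hat r \in (r/2, r)$, the bound $\hat r - r/2 \le r/2$ gives $\left(\tfrac{r}{2}\right)^{-qs-i}\left(\hat r - \tfrac{r}{2}\right)^i \le \left(\tfrac{r}{2}\right)^{-qs}$ term by term; summing, and using $r > 1$ to get $(r/2)^{-qs} \le 2^{qs}$, this produces
$$(r - \hat r)^{-qs} \;\le\; 1 + c_{k,q,s}\,2^{qs},$$
with $c_{k,q,s}$ as in \eqref{contevbdkop}. By definition \eqref{LASTORIADELSIORINTENTO} we have $c_{k,q,s}\,2^{qs} \le \bar c_{k,q,s}$, and Lemma~\ref{hole:lemma} yields $\bar c_{k,q,s} \le \hat c_{p,m}^{qs}$, so inverting gives
$$r - \hat r \;\ge\; \bigl(1 + \hat c_{p,m}^{qs}\bigr)^{-1/(qs)}.$$

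The final step — which I expect to be the main technical point — is to extract from this right-hand side a strictly positive constant $\widehat C_{p,m}$ depending only on $p$ and $m$, uniformly for $s \in (0,1)$. To this end, one analyzes the function $s \mapsto (1 + \hat c_{p,m}^{qs})^{-1/(qs)}$, using the explicit form $\hat c_{p,m} = 2 e^{c_{p,m}^{(1)}}$ from \eqref{hatcpm}. The subtle regime is $s \to 0^+$, where $\hat c_{p,m}^{qs} \to 1$ so the naive bound degenerates; one must exploit the specific growth rate of $\hat c_{p,m}$ in $p,m$ (and, if needed, tighten the polynomial estimate in the second step by keeping a factor $(\hat r - r/2)/(r/2) < 1$ instead of discarding it) to ensure the infimum over $s \in (0,1)$ is strictly positive. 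Once this uniform positive lower bound is secured, defining $\widehat C_{p,m}$ as this infimum completes the proof.
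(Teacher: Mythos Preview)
Your strategy through the first two steps is essentially the paper's: you arrive at $(r-\hat r)^{-qs}\le 1+c_{k,q,s}\,2^{qs}$, which is exactly the paper's key inequality. The genuine gap is in the final step, and your own diagnosis of it is correct --- the bound you extract via Lemma~\ref{hole:lemma} does degenerate to $0$ as $s\to 0^+$. Writing $\alpha=qs$, you have $(1+\hat c_{p,m}^{\alpha})^{-1/\alpha}$, and since $1+\hat c_{p,m}^{\alpha}\to 2$ while the exponent $-1/\alpha\to -\infty$, this tends to~$0$. Your proposed remedy of retaining the factor $(\hat r-r/2)/(r/2)$ cannot help: we have no a~priori upper bound on $\hat r$ away from $r$, so this ratio may be arbitrarily close to~$1$.

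The actual fix is not to pass through $\bar c_{k,q,s}$ and Lemma~\ref{hole:lemma} at all, but to use the sharper intermediate bound $c_{k,q,s}\le qs\,c_{p,m}^{(1)}$ established in~\eqref{soimate} (inside the proof of Lemma~\ref{hole:lemma}). The crucial extra factor of $qs$ gives
\[
r-\hat r\ \ge\ \bigl(1+qs\,c_{p,m}^{(1)}\,2^{q}\bigr)^{-1/(qs)},
\]
and now the standard limit $(1+\alpha A)^{-1/\alpha}\to e^{-A}$ as $\alpha\to 0^+$ yields a strictly positive lower bound uniform in $s\in(0,1)$. This is precisely what the paper does. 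So your overall architecture is right; you only need to swap the estimate invoked at the uniformity step for the one that carries the $qs$ factor.
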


\begin{proof}
{F}rom the computations in~\eqref{fr-fe-87}, we see that,
for every~$t\in \left(\frac{r}{2},r\right)$,
\begin{equation*}
f(t)\ge  (r-t)^{-qs}-c_{k,q,s}\left(\frac{r}{2}\right)^{-qs},
\end{equation*} where~$c_{k,q,s}$ is given in~\eqref{contevbdkop}.

Thus, if we define 
\begin{equation}
r_0:=r-\left(c_{k,q,s}\left(\frac{r}{2}\right)^{-qs}+1\right)^{-\frac{1}{qs}},
\end{equation}
we obtain that~$f(t)\geq 1$ for every~$t\in (r_0,r)$. 

Therefore, owing to~\eqref{g874rfegt56},  we obtain that~$\hat{r}<r_0$. 
This gives that 
\begin{equation*}
r-\hat{r}\geq \left(c_{k,q,s}\left(\frac{r}{2}\right)^{-qs}+1\right)^{-\frac{1}{qs}}\geq \big(c_{k,q,s}2^{qs}+1\big)^{-\frac{1}{qs}}=:C_{q,s,k}.
\end{equation*}
Hence, in order to complete the proof of Lemma~\ref{telviv}, it is only left to show that there exists~$\widehat{C}_{p,m}\in (0,+\infty)$ such that 
\begin{equation}\label{fbercjured}
C_{q,s,k}\geq \widehat{C}_{p,m}.
\end{equation}
To prove~\eqref{fbercjured}, we make use of~\eqref{soimate}
and we obtain that  
\begin{equation}\label{befdgcttt543}
C_{q,s,k}\geq  \left(qs c_{p,m}^{(1)} 2^{q}+1\right)^{-\frac{1}{qs}}
=\left(1-\frac{qsc_{p,m}^{(1)}2^{q}}{qsc_{p,m}^{(1)}2^{q}+1}\right)^{\frac{1}{qs}}. 
\end{equation} 

Now we consider the continuous function~$l:(0,1]\to (0,+\infty)$ defined by 
\begin{equation*}
l(s):=\left(1-\frac{qsc_{p,m}^{(1)}2^{q}}{qsc_{p,m}^{(1)}2^{q}+1}\right)^{\frac{1}{qs}}.
\end{equation*}
We set~$s_0:=\min \left\lbrace 1, \frac{1}{q 2^{q+1}c_{p,m}^{(1)}} \right\rbrace$
and we observe that, for every~$s\in (0,s_0)$,
\begin{equation*}
1-\frac{qsc_{p,m}^{(1)}2^{q}}{qsc_{p,m}^{(1)}2^{q}+1}\geq \frac{1}{2}
\end{equation*}
and therefore
\begin{equation*}
l(s)\geq \big(1-qsc_{p,m}^{(1)}2^{q}\big)^{\frac{1}{qs}}.
\end{equation*}
Taking the liminf as~$s\to 0$, we obtain that 
\begin{equation*}
\liminf_{s\to 0} l(s)\geq e^{-c_{p,m}^{(1)}2^{q}}.
\end{equation*}
{F}rom this, the fact that~$l\in C((0,1],(0,+\infty))$ and~\eqref{befdgcttt543}, we obtain~\eqref{fbercjured}, as desired.
\end{proof}

\begin{cor}\label{bfdclaop74r-09}
Let~$p\in (1,2)$, $s\in \left[\frac{p-1}{2p},1\right)$, $r\in (1,+\infty)$ and~$\mu \in (0,+\infty)$. Let~$v:\R^n\to [0,1]$ be defined as in~\eqref{vdefinition}. 

Then, there exists~$C_{\mu,n,p,m}\in (0,+\infty)$
such that, for every~$x\in B_r$,
\begin{equation}\label{marklettieri}
\int_{B_{\mu}(x)}\frac{\left(v(y)-v(x)\right)\left|v(y)-v(x)\right|^{p-2}}{\left|x-y\right|^{n+sp}}\,dy\leq \frac{C_{\mu,n,p,m}}{1-s}.
\end{equation}
\end{cor}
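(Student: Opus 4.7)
The plan is to reduce the estimate to Lemma~\ref{ebgfncy654444} by exploiting that the factor $\rho_x^{-qs(p-1)-sp}$ appearing there, with $\rho_x:=(r-|x|)/2$, only blows up when $|x|$ approaches $r$, whereas in that regime $v$ is already identically $1$ and the integrand becomes non-positive. Accordingly, first I would dispose of the case $x\in B_r\setminus B_{\hat r}$: here $v(x)=1\geq v(y)$ for every $y\in\R^n$ by the definition of~$v$ in~\eqref{vdefinition}, so
\begin{equation*}
(v(y)-v(x))|v(y)-v(x)|^{p-2}=-|v(y)-v(x)|^{p-1}\leq 0
\end{equation*}
and the integral in~\eqref{marklettieri} is non-positive, whence the bound holds trivially.

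For $x\in B_{\hat r}$ I would set $\rho_x:=(r-|x|)/2$ and invoke Proposition~\ref{telviv} to observe that $\rho_x\geq (r-\hat r)/2\geq \widehat{C}_{p,m}/2$. The strategy is then to write the integral over $B_\mu(x)$ as the integral over $B_{\rho_x}(x)$ plus a correction over the symmetric difference $B_\mu(x)\triangle B_{\rho_x}(x)$. Lemma~\ref{ebgfncy654444} bounds the integral over $B_{\rho_x}(x)$ by $C_{n,p,m}\rho_x^{-qs(p-1)-sp}/(1-s)$; since $qs=sp/(m-1)\leq p/(m-1)$ and $sp\leq p$ are uniformly bounded as $s$ varies in $[(p-1)/(2p),1)$, the lower bound on $\rho_x$ turns this into an estimate of the form $C(n,p,m)/(1-s)$ independent of $x$.

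For the annular correction I would use $\|v\|_{L^\infty(\R^n)}\leq 1$, which gives $|v(y)-v(x)|^{p-1}\leq 1$, to bound the integrand in absolute value by $|x-y|^{-n-sp}$. A polar-coordinate computation then yields
\begin{equation*}
\left|\int_{B_\mu(x)\triangle B_{\rho_x}(x)}\frac{(v(y)-v(x))|v(y)-v(x)|^{p-2}}{|x-y|^{n+sp}}\,dy\right|\leq \frac{\omega_{n-1}}{sp}\min(\mu,\rho_x)^{-sp},
\end{equation*}
and the assumption $s\geq (p-1)/(2p)$ yields $sp\geq (p-1)/2$, so $1/(sp)\leq 2/(p-1)$, while $\min(\mu,\rho_x)\geq \min(\mu,\widehat{C}_{p,m}/2)$ keeps the right-hand side bounded by a constant depending only on $\mu,p,m$. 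Summing the two contributions yields the required bound $C_{\mu,n,p,m}/(1-s)$.

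No single step in this plan looks like a real obstacle; the only non-trivial bookkeeping is to track that every constant depends only on $\mu,n,p,m$ and that all exponents stay uniformly bounded as $s$ varies in $[(p-1)/(2p),1)$, so that no additional factor of $s^{-1}$ sneaks in beyond the $(1-s)^{-1}$ already allowed by the statement.
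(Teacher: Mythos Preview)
Your proposal is correct and follows essentially the same route as the paper: dispose of $x\in B_r\setminus B_{\hat r}$ by a sign argument, then for the remaining $x$ use Lemma~\ref{ebgfncy654444} on $B_{\rho_x}(x)$ together with the lower bound $\rho_x\ge (r-\hat r)/2\ge \widehat C_{p,m}/2$ from Proposition~\ref{telviv}, and control the annular remainder by the crude $L^\infty$ bound. The paper splits the remainder into the two cases $\mu\ge\rho$ and $\mu<\rho$ rather than writing it as a single symmetric-difference integral, but the arithmetic is identical; your uniform treatment is if anything slightly tidier, and your explicit observation that $sp\ge(p-1)/2$ keeps $1/(sp)$ under control is exactly the point needed to ensure the constant depends only on $\mu,n,p,m$.
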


\begin{proof}
We recall that, if~$\hat{r}$ is given as in~\eqref{g874rfegt56}, then, for every~$x\in B_{r}\setminus B_{\hat{r}}$ and every~$y\in \R^n$,
\begin{equation*}
v(y)-v(x)=v(y)-1\leq 0.
\end{equation*}
Consequently, for every~$x\in B_{r}\setminus B_{\hat{r}}$ and~$\mu \in (0,+\infty)$,
\begin{equation*}
\int_{B_{\mu}(x)}\frac{\left(v(y)-v(x)\right)\left|v(y)-v(x)\right|^{p-2}}{\left|x-y\right|^{n+sp}}\,dy\leq 0,
\end{equation*}
and~\eqref{marklettieri} plainly follows.

Furthermore, in virtue of~\eqref{frl54t4t409} and Lemma~\ref{telviv},
if~$\rho:=\frac{r-\left|x\right|}{2}$ we have that,
for every~$x\in B_{\hat{r}}\setminus \overline{B}_{\frac{r}{2}}$,
\begin{equation*}
\begin{split}&
\int_{B_{\rho}(x)}\frac{\left(v(y)-v(x)\right)\left|v(y)-v(x)\right|^{p-2}}{\left|x-y\right|^{n+sp}}\,dy \leq C_{n,p,m}\frac{\rho^{-qs(p-1)-sp}}{p-sp}\\
&\qquad\qquad \leq  \frac{C_{n,p,m}}{p-sp}\left(\frac{r-\hat{r}}{2}\right)^{-qs(p-1)-sp}
\leq \frac{C_{n,p,m}}{p-sp}\left(\frac{\widehat{C}_{p,m}}{2}\right)^{-qs(p-1)-sp}.
\end{split}
\end{equation*}
Moreover, if~$\mu\in [\rho,+\infty)$,
\begin{eqnarray*}&&
\int_{B_{\mu}(x)\setminus B_{\rho}(x)}\frac{\left(v(y)-v(x)\right)\left|v(y)-v(x)\right|^{p-2}}{\left|x-y\right|^{n+sp}}\,dy 
\leq  \int_{B_{\mu}(x)\setminus B_{\rho}(x)}\frac{dy}{\left|x-y\right|^{n+sp}} \\ &&\qquad\qquad
\leq  \frac{\omega_{n-1}}{sp}\rho^{-sp}
\leq  \frac{1}{p-1}\frac{\omega_{n-1}}{p-sp}\rho^{-sp}
\leq  \frac{1}{p-1}\frac{\omega_{n-1}}{p-sp}\left(\frac{\widehat{C}_{p,m}}{2}\right)^{-sp}.
\end{eqnarray*}
{F}rom the last two displays we thus obtain that, if~$\mu\in [\rho,+\infty)$,
\begin{equation*}
\begin{split}
&\int_{B_{\mu}(x)}\frac{\left(v(y)-v(x)\right)\left|v(y)-v(x)\right|^{p-2}}{\left|x-y\right|^{n+sp}}\,dy\\
\leq\;  & \frac{C_{n,m,p}}{p-sp}\left(\frac{\widehat{C}_{p,m}}{2}\right)^{-qs(p-1)-sp} +\frac{1}{p-1}\frac{\omega_{n-1}}{p-sp}\left(\frac{\widehat{C}_{p,m}}{2}\right)^{-sp}
\end{split}
\end{equation*}
which gives the desired estimate.
  
Similarly, if~$\mu\in (0,\rho)$, 
\begin{equation*}
\begin{split}
&\int_{B_{\mu}(x)}\frac{\left(v(y)-v(x)\right)\left|v(y)-v(x)\right|^{p-2}}{\left|x-y\right|^{n+sp}}\,dy\\
=\;& \int_{B_{\rho}(x)}\frac{\left(v(y)-v(x)\right)\left|v(y)-v(x)\right|^{p-2}}{\left|x-y\right|^{n+sp}}\,dy - \int_{B_{\rho}(x)\setminus B_{\mu}(x)}\frac{\left(v(y)-v(x)\right)\left|v(y)-v(x)\right|^{p-2}}{\left|x-y\right|^{n+sp}}\,dy \\
\leq\; & \frac{C_{n,m,p}}{p-sp}\left(\frac{\widehat{C}_{p,m}}{2}\right)^{-qs(p-1)-sp}+2^{p-1}\int_{B_{\rho}(x)\setminus B_{\mu}(x)}\frac{dy}{\left|x-y\right|^{n+sp}} \\
\leq \; & \frac{C_{n,m,p}}{p-sp}\left(\frac{\widehat{C}_{p,m}}{2}\right)^{-qs(p-1)-sp} +\frac{2^{p-1}}{p-1}\frac{\omega_{n-1}}{p-sp}\mu^{-sp},
\end{split}
\end{equation*} as desired.
\end{proof}

\end{appendix}

\begin{bibdiv}
\begin{biblist}

\bib{allen1972ground}{article}{
   title = {Ground state structures in ordered binary alloys with second neighbor interactions},
journal = {Acta Metallurgica},
volume = {20},
number = {3},
pages = {423--433},
date = {1972},
doi = {https://doi.org/10.1016/0001-6160(72)90037-5},
url = {https://www.sciencedirect.com/science/article/pii/0001616072900375},
author = {Allen, S. M.},
author={Cahn, J. W.},
}

\bib{MR0618549}{article}{
   author={Alt, H. W.},
   author={Caffarelli, L. A.},
   title={Existence and regularity for a minimum problem with free boundary},
   journal={J. Reine Angew. Math.},
   volume={325},
   date={1981},
   pages={105--144},
   issn={0075-4102},
   review={\MR{0618549}},
}

\bib{MR0733897}{article}{
   author={Alt, Hans Wilhelm},
   author={Caffarelli, Luis A.},
   author={Friedman, Avner},
   title={Jets with two fluids. I. One free boundary},
   journal={Indiana Univ. Math. J.},
   volume={33},
   date={1984},
   number={2},
   pages={213--247},
   issn={0022-2518},
   review={\MR{0733897}},
   doi={10.1512/iumj.1984.33.33011},
}

\bib{bouchitte1990singular}{article}{
   author={Bouchitt\'e, Guy},
   title={Singular perturbations of variational problems arising from a
   two-phase transition model},
   journal={Appl. Math. Optim.},
   volume={21},
   date={1990},
   number={3},
   pages={289--314},
   issn={0095-4616},
   review={\MR{1036589}},
   doi={10.1007/BF01445167},
}

\bib{caffarelli1995uniform}{article}{
   author={Caffarelli, Luis A.},
   author={C\'ordoba, Antonio},
   title={Uniform convergence of a singular perturbation problem},
   journal={Comm. Pure Appl. Math.},
   volume={48},
   date={1995},
   number={1},
   pages={1--12},
   issn={0010-3640},
   review={\MR{1310848}},
   doi={10.1002/cpa.3160480101},
}

\bib{cahn1958free}{article}{
   author={Cahn, J. W.},
   author={Hilliard, J. E.},
   title={Free energy of a nonuniform system. I. Interfacial free energy},
   journal={The Journal of chemical physics},
   volume={28},
   date={1958},
   number={2},
   pages={258--267},
   publisher={American Institute of Physics},
}

\bib{cozzi2017regularity}{article}{
   author={Cozzi, Matteo},
   title={Regularity results and Harnack inequalities for minimizers and
   solutions of nonlocal problems: a unified approach via fractional De
   Giorgi classes},
   journal={J. Funct. Anal.},
   volume={272},
   date={2017},
   number={11},
   pages={4762--4837},
   issn={0022-1236},
   review={\MR{3630640}},
   doi={10.1016/j.jfa.2017.02.016},
}

\bib{Depas}{article}{
   author={De Pas, F.},
   author={Dipierro, S.},
   author={Piccinini, M.},
   author={Valdinoci, E.},
   title={Heteroclinic connections for fractional Allen-Cahn equations
   with degenerate potentials},
   journal={Preprint, available online at https://cvgmt.sns.it/paper/6925/},
}

\bib{MR2944369}{article}{
   author={Di Nezza, E.},
   author={Palatucci, G.},
   author={Valdinoci, E.},
   title={Hitchhiker's guide to the fractional Sobolev spaces},
   journal={Bull. Sci. Math.},
   volume={136},
   date={2012},
   number={5},
   pages={521--573},
   issn={0007-4497},
   review={\MR{2944369}},
   doi={10.1016/j.bulsci.2011.12.004},
}

\bib{DFVERPP}{article}{
   author={Dipierro, S.},
   author={Farina, A.},
   author={Giacomin, G.},
   author={Valdinoci, E.},
   title={Density estimates for a nonlocal variational
model with a degenerate double-well potential
driven by the Gagliardo norm}, 
   journal={in preparation},
}

\bib{1}{article}{
   author={Dipierro, Serena},
   author={Farina, Alberto},
   author={Valdinoci, Enrico},
   title={Density estimates for degenerate double-well potentials},
   journal={SIAM J. Math. Anal.},
   volume={50},
   date={2018},
   number={6},
   pages={6333--6347},
   issn={0036-1410},
   review={\MR{3890785}},
   doi={10.1137/17M114933X},
}

\bib{dipierro2023some}{article}{
   author={Dipierro, Serena},
   author={Valdinoci, Enrico},
   title={Some perspectives on (non)local phase transitions and minimal
   surfaces},
   journal={Bull. Math. Sci.},
   volume={13},
   date={2023},
   number={1},
   pages={Paper No. 2330001, 77},
   issn={1664-3607},
   review={\MR{4581189}},
   doi={10.1142/S1664360723300013},
}

\bib{2}{article}{
   author={Farina, Alberto},
   author={Valdinoci, Enrico},
   title={Geometry of quasiminimal phase transitions},
   journal={Calc. Var. Partial Differential Equations},
   volume={33},
   date={2008},
   number={1},
   pages={1--35},
   issn={0944-2669},
   review={\MR{2413100}},
   doi={10.1007/s00526-007-0146-1},
}

\bib{ginzburg1958theory}{article}{
     author={Ginzburg, V. L.},
   author={Pitaevski\u{\i}, L. P.},
   title={On the theory of superfluidity},
   journal={Soviet Physics JETP},
   volume={34(7)},
   date={1958},
   pages={858--861 (1240--1245 \v{Z}. Eksper. Teoret. Fiz.)},
   review={\MR{105929}},
}

\bib{gurtin1985theory}{article}{
  author={Gurtin, Morton E.},
   title={On a theory of phase transitions with interfacial energy},
   journal={Arch. Rational Mech. Anal.},
   volume={87},
   date={1985},
   number={3},
   pages={187--212},
   issn={0003-9527},
   review={\MR{768066}},
   doi={10.1007/BF00250724},
}

\bib{cad}{article}{
   author={Lindqvist, Peter},
   title={On the equation ${\rm div}\,(|\nabla u|^{p-2}\nabla
   u)+\lambda|u|^{p-2}u=0$},
   journal={Proc. Amer. Math. Soc.},
   volume={109},
   date={1990},
   number={1},
   pages={157--164},
   issn={0002-9939},
   review={\MR{1007505}},
   doi={10.2307/2048375},
}

\bib{MR1940355}{article}{
   author={Maz\cprime ya, V.},
   author={Shaposhnikova, T.},
   title={On the Bourgain, Brezis, and Mironescu theorem concerning limiting
   embeddings of fractional Sobolev spaces},
   journal={J. Funct. Anal.},
   volume={195},
   date={2002},
   number={2},
   pages={230--238},
   issn={0022-1236},
   review={\MR{1940355}},
   doi={10.1006/jfan.2002.3955},
}

\bib{modica1987gradient}{article}{
   author={Modica, Luciano},
   title={The gradient theory of phase transitions and the minimal interface
   criterion},
   journal={Arch. Rational Mech. Anal.},
   volume={98},
   date={1987},
   number={2},
   pages={123--142},
   issn={0003-9527},
   review={\MR{0866718}},
   doi={10.1007/BF00251230},
}

\bib{Mor77}{article}{
   author={Modica, Luciano},
   author={Mortola, Stefano},
   title={Un esempio di $\Gamma \sp{-}$-convergenza},
   language={Italian, with English summary},
   journal={Boll. Un. Mat. Ital. B (5)},
   volume={14},
   date={1977},
   number={1},
   pages={285--299},
   review={\MR{0445362}},
}

\bib{4}{article}{
   author={Palatucci, Giampiero},
   author={Savin, Ovidiu},
   author={Valdinoci, Enrico},
   title={Local and global minimizers for a variational energy involving a
   fractional norm},
   journal={Ann. Mat. Pura Appl. (4)},
   volume={192},
   date={2013},
   number={4},
   pages={673--718},
   issn={0373-3114},
   review={\MR{3081641}},
   doi={10.1007/s10231-011-0243-9},
}

\bib{6}{article}{
   author={Petrosyan, Arshak},
   author={Valdinoci, Enrico},
   title={Density estimates for a degenerate/singular phase-transition
   model},
   journal={SIAM J. Math. Anal.},
   volume={36},
   date={2005},
   number={4},
   pages={1057--1079},
   issn={0036-1410},
   review={\MR{2139200}},
   doi={10.1137/S0036141003437678},
}

\bib{petrosyan2005geometric}{article}{
   author={Petrosyan, Arshak},
   author={Valdinoci, Enrico},
   title={Geometric properties of Bernoulli-type minimizers},
   journal={Interfaces Free Bound.},
   volume={7},
   date={2005},
   number={1},
   pages={55--77},
   issn={1463-9963},
   review={\MR{2126143}},
   doi={10.4171/IFB/113},
}

\bib{MR0523642}{article}{
   author={Rowlinson, J. S.},
   title={Translation of J. D. van der Waals' ``The thermodynamic theory of
   capillarity under the hypothesis of a continuous variation of density''},
   journal={J. Statist. Phys.},
   volume={20},
   date={1979},
   number={2},
   pages={197--244},
   issn={0022-4715},
   review={\MR{0523642}},
   doi={10.1007/BF01011513},
}

\bib{5}{article}{
   author={Savin, Ovidiu},
   author={Valdinoci, Enrico},
   title={Density estimates for a nonlocal variational model via the Sobolev
   inequality},
   journal={SIAM J. Math. Anal.},
   volume={43},
   date={2011},
   number={6},
   pages={2675--2687},
   issn={0036-1410},
   review={\MR{2873236}},
   doi={10.1137/110831040},
}

\bib{savin2012gamma}{article}{
   author={Savin, Ovidiu},
   author={Valdinoci, Enrico},
   title={$\Gamma$-convergence for nonlocal phase transitions},
   journal={Ann. Inst. H. Poincar\'e{} C Anal. Non Lin\'eaire},
   volume={29},
   date={2012},
   number={4},
   pages={479--500},
   issn={0294-1449},
   review={\MR{2948285}},
   doi={10.1016/j.anihpc.2012.01.006},
}

\bib{3}{article}{
   author={Savin, Ovidiu},
   author={Valdinoci, Enrico},
   title={Density estimates for a variational model driven by the Gagliardo
   norm},
   language={English, with English and French summaries},
   journal={J. Math. Pures Appl. (9)},
   volume={101},
   date={2014},
   number={1},
   pages={1--26},
   issn={0021-7824},
   review={\MR{3133422}},
   doi={10.1016/j.matpur.2013.05.001},
}

\bib{ter2013collected}{book}{
   author={Ter Haar, D.},
   title={Collected papers of LD Landau},
   date={2013},
   publisher={Elsevier},
   
}

\bib{valdinoci2001plane}{book}{
   author={Valdinoci, Enrico},
   title={Plane-like minimizers in periodic media: Jet flows and
   Ginzburg-Landau},
   note={Thesis (Ph.D.)--The University of Texas at Austin},
   publisher={ProQuest LLC, Ann Arbor, MI},
   date={2001},
   pages={89},
   isbn={978-0493-83720-8},
   review={\MR{2703829}},
}

\bib{MR2228294}{article}{
   author={Valdinoci, Enrico},
   author={Sciunzi, Berardino},
   author={Savin, Vasile Ovidiu},
   title={Flat level set regularity of $p$-Laplace phase transitions},
   journal={Mem. Amer. Math. Soc.},
   volume={182},
   date={2006},
   number={858},
   pages={vi+144},
   issn={0065-9266},
   review={\MR{2228294}},
   doi={10.1090/memo/0858},
}

\bib{weber2019physics}{article}{
   author={Weber, Christoph A},
   author={Zwicker, D.},
   author={J{\"u}licher, F.},
   author={Lee, Chiu F.},
   title={Physics of active emulsions},
   journal={Reports on Progress in Physics},
   volume={82},
   number={6},
   pages={064601},
   date={2019},
   publisher={IOP Publishing},
}

\end{biblist}
\end{bibdiv}

\end{document}